\theoremstyle{plain}
\newtheorem{theorem}{Theorem}
\newtheorem{condition}{Condition}
\newtheorem{lemma}{Lemma} 
\newtheorem{remark}{Remark}
\newcommand{\bu}{\mbox{\bf u}}
\newcommand{\bv}{\mbox{\bf v}}
\newcommand{\bw}{\mathbf{w}}
\newcommand{\bx}{\mbox{\bf x}}
\newcommand{\bA}{\mbox{\bf A}}
\newcommand{\bD}{\mbox{\bf D}}
\newcommand{\bG}{\mbox{\bf G}}
\newcommand{\bM}{\mbox{\bf M}}
\newcommand{\bR}{\mbox{\bf R}}
\newcommand{\bT}{\mbox{\bf T}}
\newcommand{\bW}{\mbox{\bf W}}
\newcommand{\bX}{\mbox{\bf X}}
\newcommand{\bY}{\mbox{\bf Y}}
\newcommand{\bvarepsilon}{\mbox{\boldmath $\varepsilon$}}
\newcommand{\bbeta}{\mbox{\boldmath $\beta$}}
\newcommand{\bSigma}{\mbox{\boldmath $\Sigma$}}
\begin{document}

\title{ \sc  \Large randomized   \Large maximum-contrast selection: \\ subagging for  large-scale regression}
\runtitle{Variable selection for Large Scale regression}

\author{\fnms{Jelena} \snm{Bradic}
\ead[label=e1]{jbradic@ucsd.edu}}

\address{Department of Mathematics\\
University of California San Diego\\
\printead{e1}}

\begin{abstract}

We introduce a very general method for sparse and large-scale variable selection.
The large-scale regression settings is such that   both the number of parameters and the number of samples are  extremely large.
The proposed method is  based on
careful combination of  penalized estimators, each applied to a  random projection of the sample space into a low-dimensional space. 
 In one special case
that we study in detail, the random projections are divided into non-overlapping blocks; each consisting of only a small portion of the original data.
Within each block we select the projection yielding the smallest out-of-sample error. 
Our random   ensemble estimator  then aggregates the results  
according to    
new maximal-contrast voting scheme
to determine the final selected set. Our theoretical results illuminate the effect on performance
of increasing the number of non-overlapping blocks.  
Moreover, we demonstrate that statistical optimality is   retained along with 
  the computational speedup.   The proposed method achieves minimax rates for approximate recovery  over all estimators using the full set of   samples.  Furthermore, our theoretical results allow the number of subsamples  to grow  with the  subsample size and do not require irrepresentable condition.
The estimator is also compared empirically
with several other popular high-dimensional estimators via an extensive simulation
study, which reveals its excellent finite-sample performance.\\

%
%
\end{abstract}

\maketitle



%


\section{Introduction}

 In recent years, statistical analysis of massive data sets has become a subject of increased interest. Modern data sets, often characterized by both high-dimensionality and massive sample sizes,   introduce a range of unique computational challenges, including scalability and storage bottlenecks \citep{FHL13}. Due to the associated storage and computational constraints, the number of data points from the original sample that   can be  processed will be severely limited \citep{FH14}.   In such settings,  it becomes  natural to employ  techniques  based on random subsets of the original data that are of a very small size.

Classical ideas of bootstrap \citep{E79}, aggregation \citep{B96}  and subsampling \citep{P01} are the first that come to mind.   However, the key limitation of the traditional bootstrap is   that resamples  typically have  the same order of magnitude and size as the original data. In large-scale settings,  $n$ and $p$ are often   simultaneously large;    orders of magnitude of $10^6$   or more are not uncommon.
 Hence, even computation of a simple point estimate on the full data set can be an issue.  Moreover, repeated evaluations of the resamples are likely to face the same set of computational and storage challenges as the original problem.

  Subsampling  provides a viable alternative, as it only requires computations   of samples potentially much smaller  than the original data.  However, subsampling is quite sensitive to the choice of the subsample size \citep{S03}:  the smaller the subsample size is,  the larger the variance of the estimates. \cite{BLB12} propose an alternative approach called bag of little bootstraps, which combines bootstrap with smaller order subsampling and demonstrate that for the purpose of estimating continuous parameters, such   method retains bootstrap-like convergence rates. In this work, we consider all of the above mentioned   methods in the context of the support recovery and large-scale regression. 
    
  We propose a  method that operates with a large number of subsamples, each of a very small size, that  can approximately  retrieve the support set  of the regression parameters. We employ an  approach of subsample bootstrap aggregation (subagging) and illustrate  how to aggregate estimators in order to produce stable   approximations of the support set $S$, without requiring the widely used Irrepresentable condition \citep{vGB09}. The key is to compute  estimates, at non-overlapping subsamples using randomized bootstrap and then exploit the randomization to enlarge support sets of those estimates. In this way, we ensure   uniformity of the results, obtained across subsamples and avoid  the potentially harmful effects of smaller order subsampling.

  We consider the simple linear model 
\begin{equation}\label{eq:model}
\mathbf Y =\mathbf X \boldsymbol{\boldsymbol\beta}^* + \boldsymbol\varepsilon,  
\end{equation}
where  $\mathbf Y \in \mathbb{R}^n$, $\mathbf X  \in \mathbb{R}^{n \times p}$ and both $n$ and $p$ are very large numbers, possibly of the order of hundreds of thousands or millions. The noise vector $ \boldsymbol\varepsilon$   is assumed to be i.i.d. and independent of $\mathbf X$.
We assume that the vector $\boldsymbol{\boldsymbol\beta}^*$  is such that its support set $\mbox{supp}(\boldsymbol{\boldsymbol\beta}^*)=S \subseteq \{1,\cdots,p \}$ 
 describes a particular subset of important variables for the
linear model \eqref{eq:model}, for which we then assume $|S|\ll p$. 
  The method presented here  is  quite  intuitive: we partition the dataset of size $n$ randomly into $d$ equal   subsets of size $N \ll \min\{n,p\}$ and
  compute the  penalized  regression estimate  for each of the $i = 1,\dots,d$ subsets independently, using a  careful choice of the regularization parameter $\lambda_N$. The estimates are subsequently averaged using a new multiple voting scheme that shrinks high variability in the estimates. 
  
As computations are done independently of each other, the proposed method  is 
 especially suited  for implementation across   distributed and parallel computing platforms, often used for processing of large-scale data.
  Distributed approaches based on bootstrap aggregation  \citep{B96} have been studied by several authors, including  \cite{BY02} for least-squares algorithms,  \cite{B08} for bootstrapped Lasso algorithm, \cite{MHM10}  for perceptron-based algorithms, \cite{BLB12} for distributed versions of a bootstrap, \cite{ZDW12} for   convex optimization  and  \cite{ZDW13}  for kernel ridge regression algorithm. 
 However,    support recovery guarantees for  subsampled approaches of a smaller scale, have not been analyzed much in the existing literature.  Recent proposals include Bayesian median of subsets  \cite{M14} and related median of subsets 
\cite{D14}. However, both approaches work with subsamples  with sizes proportional to $n$.
 
 The fundamental observations that underpins our proposal is the fact
that naive aggregation of the estimators computed on the small subsamples,
 is not sensible, since most of the subsamples
 will typically destroy the  structure in the data.
Nevertheless, such  estimators have great flexibility in   a choice of  the regularization parameter.
Our theoretical analysis   demonstrates that, even though each   subsampled estimator is computed  only on a very small fraction   of the samples, it is   essential to regularize  such estimators  as if they had 
 all $n$ samples. As a result of that,  each sub-problem is under-regularized, which allows for  the small   bias in   estimation,  but   causes an adverse blow-up in the variance. 
Hence, a simple majority vote of  the  retained
sets can be highly suboptimal; instead, we argue that the voting  should
be chosen  to minimize the worst case risk.
We show that such voting scheme, named maximal-contrast  voting, sufficiently  reduces the variance of under-regularization.
 Our main theorem shows that, while achieving  computational speedup the proposed method retains statistical optimality;  it achieves minimax rates, over all estimators using the set of $n$ samples.

 Our theoretical results are divided into three parts. In the first, we consider   one randomized subsample estimator
and quantify
the difference between the 
selected set of such one subsample estimator 
 and
the Lasso estimator computed on the data of the full size $n$. We then consider
minimax-contrast aggregation of such randomized estimators computed on non-overlapping blocks.
Under a condition implied by the widely-used restricted eigenvalue assumption \citep{BRT09}, we can then control the
difference between the selected set of the aggregated estimator  and the true support set $S$, as a
function of terms that depend on the number of non-overlapping blocks, the pairwise block distance, as well as the size of the each block and terms that diminish when the number of blocks grow. Furthermore, we establish minimax optimality of the proposed method.
 The final part of our theory gives risk bounds on the
 naive bagging of Lasso estimators named Bootstrapped Lasso in \cite{B08}, namely its inability to retrieve support set when the number of blocks grows. 

We briefly introduce the notation used throughout the paper.  Let $I$ and $J$ be two subsets of $\{1,\cdots,n\}$ and $\{1,\cdots,p\}$ respectively. For any set $S$ we use  $|S|$ to denote its cardinality. 
 For any vector $\mathbf x$, let $\mathbf x_{J} \in \mathbb{R}^{|J|}$ denote  its sub vector with coordinates belonging to $J$.   We use notation $\|\bx\|$, $\| \bx \|_1$ and $\| \bx\|_{\infty} $ to denote $L_2$, $L_1$  and $L_\infty$ norms  respectively, of a vector $\bx$.
  For any matrix $\mathbf A$,  let  $\mathbf a^j$ denote its $j$-th column vector, and  let $\mathbf A_{I,J}$ denote a matrix formed by the rows and columns of $\mathbf A$, which belong to the set $I$ and $J$, respectively. Shorthand notation   $\mathbf A _I$ stands for $\mathbf A_{I,\{1,\cdots,p\}}$. We also use $\phi_{\min}(\mathbf A)=\lambda_{\min}(\mathbf A^T \mathbf A) $ and $\phi_{\max}(\mathbf A)=\lambda_{\max}(\mathbf A^T \mathbf A) $ with $\lambda_{\min}$ and $\lambda_{\max}$ denoting minimal and maximal eigenvalue, respectively.   We denote with $\kappa(\mathbf A^T \mathbf A)$ the conditioning number of matrix $\mathbf A$, and define it as  $\phi_{\max}(\mathbf A)/\phi_{\min}(\mathbf A)$. We use $\| \bA\|$ to denote the operator norm of a matrix  $\bA$. Note that   $\lambda_N$ and $\lambda_n$ denote tuning parameters of   Lasso  problems, computed over subsample of size $N$ and full sample of size $n$, respectively.

The rest of the paper is organized as follows.  Section \ref{sec:methodology} describes the proposed weighted small sample bagging of Lasso estimators. Theorems \ref{thm:21} and \ref{thm:31} of  Section 3,  discuss  the difference between the  estimated sparsity sets of the sub-Lasso and the Lasso estimators. Subsection \ref{sec:support} discusses theoretical findings on support recovery properties of the weighted subagging, with the  main result summarized in Theorem \ref{thm:final}.   Theorem \ref{thm:optimal} develops novel minimax rates for approximate support recovery. Subsection \ref{sec:subbagging} outlines the inefficiency of  classical subagging of Lasso estimators ,with  the main results summarized in Theorems \ref{prop:1} and \ref{prop:bootstrap}.  Finally, in Section   \ref{sec:examples}  we provide details and results of the implementation of our method on the simulated    data.

 \section{Randomized Maximum-Contrast Selection}\label{sec:methodology}
 
 We start by describing our sample partitioning  and defining the relevant notation. 
Let $I=\{1, \cdots, n\}$  denote the sample index set,  which we  divide into a group of disjoint subsets each of size $N \ll n$. Although we are mostly interested in subsets of a much smaller order than $n$,  the exposition of the method does not depend on the choice of $N$.  
In this sense, the complete dataset $\{(Y_1,\bX_1),(Y_2,\bX_2)\cdots, (Y_n,\bX_n)\}$  is split evenly and uniformly at random  into many small, disjoint subsets. We  consider $d$  of such subsets, i.e.  
\begin{equation}\label{eq:split}
 (\mathbf Y_{ I_i } ,\mathbf X_{ I_{i} } ) = \bigl\{(Y_{i_1},\bX_{i_1}), \cdots, (Y_{i_N},\bX_{i_N})\bigl\},
 \end{equation}
  for $i=1,\cdots, d$ and allow $d$ to grow with $n$.

Let the weighted sub-Lasso estimator ${\hat {\boldsymbol \beta}}_{i:k} (\lambda_N)$ be defined as  
\begin{equation}\label{eq:smallLasso}
{\hat {\boldsymbol \beta}}_{i:k} (\lambda_N ) =  \arg \min_{\boldsymbol \beta} \biggl\{  \frac{1}{2n} \bigl\| \bD_{ \sqrt{\bw_k}} \mathbf Y_{I_i} - \bD_{ \sqrt{\bw_k}}    \mathbf X_{ I_{i} } \boldsymbol \beta \bigl\|^2 \  + \lambda_N \|\boldsymbol \beta\|_1\biggl\},
\end{equation} 
where $ \bD_{ \bw_k} \in \mathbb R^{N \times N}$ is a diagonal matrix with a vector of random weights, $ \bw_k \in \mathbb R^N$ on its diagonal.   Index $k =1,\dots, K$ enumerates the number of random draws of the weight vector $\bw$. Note that  ${\hat {\boldsymbol \beta}}_{i:k} (\lambda_N ) $ is  computed using only observations within one subset of the data.  Yet, for fixed and discrete choice of $\bw_k$, ${\hat {\boldsymbol \beta}}_{i:k} (\lambda_N)$  
 can be rewritten as the solution to   an $n \times p$   problem
\[
{\hat {\boldsymbol \beta}}_{i:k} (\lambda_N)= \arg \min_{\boldsymbol \beta} \left\{ \frac{1}{2n} \left\|  \tilde {\bY} -\tilde {\bX}_{i :k }\boldsymbol \beta \right\|^2 + \lambda_N \|\boldsymbol \beta\|_1 \right\},
\]
 with $\tilde {\mathbf X}_{i:k} ^T\in \mathbb{R}^{p \times n}$ defined as
 \begin{equation} \label{eq:tildeX}
 \tilde {\mathbf X}_i ^T=[\mathbf X_{i_1}^T,\cdots, \mathbf X_{i_1}^T, \cdots,\mathbf X_{i_N}^T,\cdots, \mathbf X_{i_N}^T ], 
 \end{equation}  
with the rows $\mathbf X_{i_j} \in \mathbb{R}^{1\times p}$ repeated $w_{k,i_j}$ times, each for $j \in 1, \cdots, N$ and $k=1,\cdots, K$.
In other words, ${\hat {\boldsymbol \beta}}_{i:k} (\lambda_N)$  minimizes the    penalization problem, computed using a random projection of the original data, where all the features are kept and  the sample space   is projected into  a low-dimensional space of size $N$. The proposed projection, places a constraint that the number of distinct observations $N$  is     fixed, non-random and much smaller than $n$; 
different from the Efron's traditional bootstrap method which  has a random  number of distinct observations. Furthermore, we require the following condition on the random vector $\bw_k$, for all $k=1,\dots, K$.

 
   \begin{condition}\label{cond:w}
  Let $\mathbf w=( w_1,\dots,w_N)$  be a random vector, such that $w_1,\dots,w_N$   are exchangeable random variables and are independent of the    data $(\bY,\bX)$. Moreover, $w_1,\dots,w_N$  are such that    $\sum_{j=1}^N w_j=n$ and 
 $P(w_1>0)=1$. 
 Additionally,
 for  $\mathbf w _{2 }^2 = \int_0^{e^{2n}} P_w\left(     \max_{ 1 \leq j \leq N} w _{j} \geq   \frac{1}{2}\log t  \right)dt$,  let  $ \mathbf w _{2 } $ satisfy $\log   \mathbf w _{2  }\leq p^{n/N } /n$ almost surely.
\end{condition}


Condition \ref{cond:w} is inspired by   similar conditions imposed for the weighted bootstraps \citep{PW93}.
 However, traditional   weights  do not fit the Condition \ref{cond:w} as they typically follow the Multinomial $\mathcal{M}_n(N,(\frac{1}{n},\cdots, \frac{1}{n}))$ distribution of dimension  $n \geq N$. If we reverse the roles of $n$ and $N$, they can easily be adapted   to the Condition \ref{cond:w},
  with vector $\mathbf w_k$, drawn from the Multinomial distribution $\mathcal{M}_N(n,(\frac{1}{N},\cdots, \frac{1}{N}))$.   
Each  random projection of the original data is data of size $n$,  drawn with replacement on  $N \ll n$ fixed, original data points. \cite{BLB12}  use   these resamples  to   average  continuous estimators. In contrast, our focus is the support recovery in large scale and potentially high dimensional problems.  
Other examples of randomized schemes that satisfy Condition 1, include a Balanced P\'{o}lya urn scheme \citep{A07} and a coupling of Poisson  and  Multinomial distribution; the first  includes a P\'{o}lya urn scheme with a strategy guaranteeing that each ball color is  represented at least once whereas the second scheme includes a randomized game of throwing $n$ balls  into $N$ urns, and repeating the throws until all the balls are in urns.

  For the simplicity in presentation we impose the following  finite moment  condition on the noise vector of the linear model \eqref{eq:model}. Nevertheless,   we believe all the results of the manuscript extend to sub-Gaussian errors.
  
 \begin{condition}\label{cond:e}
 The noise vector $\boldsymbol{\varepsilon}$, \eqref{eq:model}, is such that 
  $E_{\varepsilon}  |\varepsilon_{  i }|^r  \leq r! \sigma^2  c^{r-2} /2 $ for every $r \geq 2$ (and all $i$) and some constants $c<\infty$ and $\sigma^2 <\infty$. 
 \end{condition}

Let the indices $ \{1,\dots,d\}$  be  split into $b$ blocks of  $m$-pairs; in particular,  $b$ and $m$ satisfy   $d=bm$.  The number of subsamples, $d$, is allowed to  grow  with  $n$; it depends on $n$ through $b$. Constant $m$ allows for additional flexibility in estimation. 
Next, we introduce the maximal-contrast selection,   a variant of stability selection, where the subsamples
are drawn as $N \ll n$, $m$ complementary  pairs from  $\{1, \dots, n\}$.  Thus the   procedure outputs,  $b$ of such $m$-pairs index sets $\{I_{mq+1-l}; q=1,\dots, b, l=1,\dots, m\}$, where each $I_{mq+1-l}$ is  subset of  $\{1, \dots, n\}$ of size    $N$ and   $I_{mq+1-1} \cap \dots \cap  I_{mq+1-m} =\emptyset$. A  special case  of the above sets, with  $m=1$  and $m=2$  are  the   sets  of \cite{MB11,D14} and the    sets  of \cite{SS12}, respectively. However, both are based on the subsets that  are of the size proportional to $n$. As we allow $d$ to grow with $n$,  our procedure includes subsamples of a much smaller order. For such cases, simply applying existing methods above  leads to ``select all" or ``select none" vote (see Theorem \ref{prop:1} for a detailed proof).
We show in Theorem \ref{thm:21} below that subsamples of a much smaller order  cause  blow-up in the variance of  the  estimated non-zero sets of   $ {\hat {\boldsymbol \beta}}_{i:k} (\lambda_N)$; that is,  the variance   of the sets 
$$
\hat {S}_i ( \lambda_N, k) = \left\{ 1 \leq j \leq p :  {\hat { \beta}}_{i:k,j} (\lambda_N ) \neq 0 \right\},
$$
for each fixed vector   $\mathbf w_k$, $k=1,\dots, K$, is large.
 Hence,  naive estimators above fail  and we aim to improve them. 
%
%
%
%
%
%
%
 After $K$ random draws of $\bw_{\mathbf k}$,  for each subsample $I_i$, we obtain $K$   sets $\hat {S}_i ( \lambda_N, 1), \dots,$ $ \hat {S}_i ( \lambda_N, K)$. 
Initially, we compute  the selection of variable $j$  in a union of  those   $K$ sets. Next 
we compute a minimax majority vote across $m$-pairs of these unions, each computed   on the non-overlapping  subsamples.
The minimax majority vote  estimator     is defined as
\begin{equation}\label{eq:pi}
\pi_j^*(\lambda_N,b,m,K)=  \frac{1}{\sqrt{b}(\sqrt{b}+1)} \sum_{q=1}^b     \mathbbm{1}\left\{ j \in \cap_{l=1}^m \cup_{ k=1}^K\hat {S}_{mq+1 - l}(\lambda_N,k)\right\} + \frac{1}{2} \frac{1}{\sqrt{b}+1} .
\end{equation}
Whenever possible we suppress  $b,m,K$ from  $\pi_j^*(\lambda_N,b,m,K)$ and write   $\pi_j^*(\lambda_N)$ for short. 
This estimator arises as  a solution to the   minimax  estimation of a mean   of a Bernoulli trial  (see Chapter 5,  Example 1.7 of  \cite{L98} for more details). For small values of $b \leq 40$ and  all possible values of   $p_j$,   $\pi_j^*(\lambda_N)$   has a larger bias, (bounded with $2^{-1} (1+\sqrt{b})^{-1}$ in absolute value) but  a smaller variance compared to the  maximum likelihood estimator.

%
%
   
%
%

%
 %

Next, we define  the  weighted maximum-contrast  subagging estimator $\widehat {{\boldsymbol\beta}}^a(\lambda_N) $   to be 
 $\widehat {{\boldsymbol\beta}}^a_j(\lambda_N)  =0$ if $j \notin \hat {S}_\tau$,
 $$
\hat {S}_\tau (\lambda_N) = \left \{1 \leq j \leq p :  \pi_j^*(\lambda_N)  \geq \tau \right\},
$$
with appropriately chosen thresholding parameter $\tau$. Otherwise, when $j \in \hat {S}_\tau$
\begin{equation}\label{eq:def}
\widehat {{\boldsymbol\beta}}^a_j(\lambda_N)  = K^{-1}    d^{-1} \sum_{ i =1}^d  \sum_{k=1}^K {\hat {\beta}}_{i:k,j} (\lambda_N) .
\end{equation}
Observe that the aggregated estimator $\widehat {{\boldsymbol\beta}}^a_j(\lambda_N)  $ is a random and  dependent on the number of random draws $K$, the number of blocks $b$  and the distance between blocks $m$.
We emphasize  here the additional flexibility afforded by not pre-specifying the voting
threshold $\tau$ to be 1/2; or the size of the pairs $m$ to be 1.

The threshold value $\tau$ is a tuning parameter whose influence is very small. For practical
values in the range of, say,  $\tau \in (0.25,0.75)$, results tend to be very similar. The choice of $m$ is more intricate.
Large values of $m$ lead to the smaller number   of both false and true positives. However,  in such cases, they simultaneously lead to smaller values of $b$. Such small values are especially suited for the estimator \eqref{eq:def} as they cause greater value of the bias; which in turn leads to an improvement of the selection. For such choices of $m$, we advocate a smaller value of the tuning parameter $\tau$. For smaller values of $m$ and hence large values of $b$, the estimator \eqref{eq:def} resembles majority vote estimator. However, small $m$  produces a  large number of false positives. However,  large values of $\tau$  can reduce this bias in selection. 

For $m=1,K=1, b=d$ and $\tau =1/2$, the proposed estimator   takes on a form of  the ``majority vote" estimator, so that $\hat {S}_\tau (\lambda_N)$ consists of all $j$ that are included in at least half of the sets $\hat {S}_i(\lambda_N,1)$, $i=1,\dots, d$. Furthermore,  for $\tau =1/4 (1 +  \frac{1}{\sqrt{d}+1} ) >1/4$, $\hat {S}_\tau (\lambda_N)$ consists of all $j$ that are included in at least a quarter of the sets $\hat {S}_i(\lambda_N,1)$, $i=1,\dots, d$.
 Moreover,  for the case of $m=2,K=2, b=\lfloor{d/2}\rfloor$ and  $\tau =1/2  $,   $\hat {S}_\tau (\lambda_N)$ consists of all $j$ that are included in at least half of the sets 
 $$\{ \hat {S}_i(\lambda_N,1) \cup \hat {S}_i(\lambda_N,2) \} \cap \{ \hat {S}_{i+\lfloor{d/2}\rfloor}(\lambda_N,1) \cup \hat {S}_{i+\lfloor{d/2}\rfloor}(\lambda_N,2) \} , \qquad i=1,\dots, \lfloor{d/2}\rfloor. $$
%

   We   observe that maximum-contrast selection allows for more structure in the search of the support set. Moreover, $\widehat {{\boldsymbol\beta}}^a_j(\lambda_N) $ takes the form of a randomized   subagging  \citep{BY02}  and a delete$-(n-N)-$jackknife \citep{E79} with additional discovery control, suited for a smaller order  subsamples and a very large size of the deleted set (with $(n-N)/n \to 1$), respectively.

If we apply Lasso  to the full data    the
estimated  set will converge to the true support set only if stringent conditions are imposed. Next section shows that the  median-contrast selection does not rely on such heavy assumptions, leading to substantial gains in not only computational
time but also feature selection performance. 
The intuition for this gain is that  a large proportion of the subsets  of the data does not preserve  the structure of  the full data; this in turn  has a sizable influence on the selected sets.
By taking biased estimator with controlled variance and additional randomization steps, we obtain a more uniform  model that is not largely influenced by these  altered structure of the subsets. As large-scale data typically contain outliers and data contamination,
this is a substantial practical advantage.

  \section{Theoretical Properties}

  Without   loss of generality, from this point on we   assume that the columns of $\mathbf X$ have unit $l_2$ norm.   Let $a \geq 1$ be a constant. 
Because of the  regularization, the following cone set is important:
$$
\mathbb{C}(a, S)=\left\{ \mathbf v \in \mathbb{R}^p:  \mathbf v \neq 0, \| \mathbf v_{S^c}\|_1 \leq a \| \mathbf v_{S}\|_1 \right\}.
$$
The Restricted Eigenvalue $\zeta_N$ of the matrix $[\sqrt \bw_k \bX]$ for a vector $\bw_k \in \mathbb{R}^N$ and a  design matrix of a partitioned data $\bX \in \mathbb{R}^{N \times p}$ is defined in Condition \ref{cond:re}. 

\begin{condition} \label{cond:re}
Let $\mathbf w_k \in \mathbb{R}^N$ be vectors of weights.
For a  matrix $ \tilde {\mathbf X}_{i:k}  \in \mathbb{R}^{n \times p}$ as in \eqref{eq:tildeX}, and $a >1$,
there exists a positive number $\zeta_N>0$ such that for all $|S|\leq 2s$
\[
\zeta_N =\min_{\mathbf v \in \mathbb{C}(a,S) } \frac{\| \tilde {\mathbf X} \mathbf v\|_2}{\sqrt{n} \| \mathbf v_S\|_2}  .
\]
 \end{condition}
 The restricted eigenvalues $\zeta_N$ are variants of the restricted eigenvalue introduced in  \cite{BRT09} and of the compatibility condition in \cite{vGB09}. 
In the display above
 $
\|\tilde{\mathbf X} \mathbf v\|_2^2 =\sum_{l \in I_{i}} w_{k,l}(\mathbf X_l \mathbf v )^2 
 $
 for any  realization $k$ of the weight vector $\mathbf w_k$. We use
subscript $N$ in $\zeta_N$ to denote  the number of distinct rows  of the matrix $  \tilde {\mathbf X}_{i:k}$. 
As long as the tail of the distribution of $X$ decays exponentially, column correlation is not too large and $N \geq  |S| \log p$,   the  condition   above holds  with high probability
  \citep{RWY11}.  

It is well established that the  necessary condition for the exact recovery of penalized methods consist of the Irrepresentable condition
\citep{vGB09}. 
 \begin{condition}[{\bf IR$(n)$}]\label{cond:ir}
The design matrix $\mathbf X \in \mathbb{R}^{n \times p}$ satisfies {\bf IR$(n)$} condition if    the following    holds
$
\left\| \mathbf X_{ {  S}^c} \mathbf X_ {  S} (\mathbf X_{   S}^T\mathbf X_{  S})^{-1} \mbox{sign}( {\boldsymbol\beta} ^*)\right\|_{\infty} < 1.
$
\end{condition}
 
The symbol { \bf IR$(n)$} denotes the   number of rows  in the design matrix $\mathbf X$, with $n$ denoting the sample size.   
 When{ \bf IR$(N)$}  is assumed to hold for every subsampled dataset, then
    each of the penalized  regression estimates $\hat \bbeta_{i:k}(\lambda_N )$  would recover  the set $S$ with high probability -- a commonly  used condition in an existing literature \citep{D14}. A far more interesting scenario is to allow deviations from{ \bf IR$(N)$} condition. 
With  subsamples  of a very small size, $N \ll p$ and $N \ll n$,  imposing   such conditions on each subsample   is not realistic; most of the  subsamples will not preserve the original data structure. 
 Instead, 
we  examine  different properties of the   design  and the sub design matrices without imposing { \bf IR$(N)$} condition.
  
\begin{lemma} \label{lem:7}
Let Condition \ref{cond:re} hold.
Let $A \subseteq \{1,\dots ,p\}$ that is of the size $|A|=r \leq 2s $ and let $I\subseteq \{1,\dots,n\}$ be of the size $|I|=N$.   Assume that   the matrix $\mathbf X_{I } \in \mathbb{R}^{N\times p}$ satisfies Condition \ref{cond:re} with $a=3$.
Then, for every $j \notin A$ ,
\begin{eqnarray} \label{eq:step0300}
\|  \mathbf X_{I ,  A}^T \mathbf X_{I ,j}\|_2 
&\leq& \zeta_N^{-3 },
\\
 \label{eq:step01}
\left\| (\mathbf X_{ I,A}^T \mathbf X_{ I, A} )^{-1}   \mathbf X_{  I,A}^T \mathbf X_{I,j}\right\| _1^2 &\leq & r/\zeta_N^2.
\end{eqnarray}  
\end{lemma}

Observe that, under Condition \ref{cond:re}, a result of Lemma 1 
        is much weaker than the Irrepresentable condition{ \bf IR$(N)$},  
  as   $r \geq \zeta_N^2$ and is possibly dependent on $n$.

The following lemma characterizes  uniform deviation of a randomized weighted sum of negatively correlated random variables  and is critical in obtaining finite sample properties; see Theorem 3 below. To the best of our knowledge there is no similar result in the
literature.
We use $\langle \cdot, \cdot\rangle_n$ to denote the empirical inner product, i.e. $\langle\bu,\bv\rangle_n = \frac{1}{n}  \bu ^T \bv $, for two vectors $\bu, \bv \in \mathbb{R}^p$.
 \begin{lemma} \label{lem:a}
 Let $\mathbf w=(w_1,\cdots,w_N)$  be a vector of weights that satisfies   Condition  \ref{cond:w}.  Let the error $\boldsymbol\varepsilon$ satisfy Condition \ref{cond:e}.
  Then,  all $i=1,\dots, d$ there exists  a  sequence $u_n$ of non-negative real numbers,   such that
 \[
\mathbb P \biggl(    \Bigl| \left \langle \bvarepsilon_{I_i} ,   \bD_{  \mathbf w} \bX_{I_i}  \right \rangle_n \Bigl| > u_n \biggl) \leq  
\exp \left\{ N \log   \mathbf w _{2}  -\frac{n^2 u_n^2 }{ 2\sigma^2   N \|  \bX_{ I_i} \|_{\infty,2}  +2  n  c u_n \|  \bX_{ I_i} \|_{\infty,\infty}} \right\},
 \]
with $ \mathbf w _{2 } $ defined in Condition \ref{cond:w}. In display above, 
 $\|  \bX_{ I_i} \|_{\infty,2}:= \max{} \Bigl \{ { X}_{I_ij}^2    :  I_i \subset \{1,\cdots, n \},  \Bigl.$ $\Bigl. |I_i|=N  , 1 \leq j \leq p\Bigl\}$
  and
  $ \|  \bX_{ I_i} \|_{\infty,\infty}:= \max \Bigl \{|{ X} |_{I_i j}   :  I_i \subset \{1,\cdots, n \}, |I_i|=N, 1\leq j \leq p \Bigl \}$.
 
 \end{lemma}

Lemma \ref{lem:a} represents a uniform, nonasymptotic exponential inequality
for a sum of negatively correlated random variables. Compared with other concentration  inequalities \citep{K8,P14}, it  holds for continuous random variables which are negatively correlated. Moreover, its independence of dimensionality $p$ and dependence on $n$, proves
to be invaluable for  the variable selection properties.

 \subsection{Support sets}

When dealing with ``imperfect learners", an important question is how to aggregate information over all learners. In order to  examine the performance of $\widehat {{\boldsymbol\beta}}^a_j(\lambda_N) $, we first study ${\hat {\bbeta}}_{i:k} (\lambda_N)$ and its ability to recover the support set $S$, for each fixed $k =1,\dots,K$.

\begin{theorem}\label{thm:21}
Let $k=1,\dots, K$ be fixed.
Let $\mathbf w_k$ be a vector of weights satisfying  Condition  \ref{cond:w}.  Let the error $\boldsymbol\varepsilon$ satisfy Condition \ref{cond:e}. Assume that   the matrix $\mathbf X_{I_i}$ satisfies Condition \ref{cond:re} with $a=3$ and  $i=1,\dots, d$.
Then,  for every $k=1,\cdots, K$,
with a choice of  $c_1 \sigma \sqrt{\log p/n} \leq \lambda_N \leq c_2 \sigma \sqrt{\log p/n}$,  $c_1>0,c_2>1$, 
%
there exists a constant $c >1$ such that,
$P(S\subseteq \hat S_i(\lambda_N,k)) \geq 1-p^{1-c}.$ Moreover, constants $c_1,c_2$ and $c$ do not depend on $n,p$ or $s$.
\end{theorem}

The proof of Theorem \ref{thm:21} is in the Appendix.    One
of the technical challenges is to control the inner product between $\bX_{I_i^c}$ and the out-of-sample fit $\bY_{I_i^c} - \hat \bY_{I_i^c}$ with $ \hat \bY_{I_i^c} = \bX_{I_i^c}\hat {\boldsymbol \beta}_{i:k}(\lambda_N )$. Theorem \ref{thm:21}  has immediate consequences.

 \begin{remark}
Theorem \ref{thm:21}
 shows that the sub-Lasso estimator, computed only on a small fraction of the data, requires   regularization  comparable to  that of the complete data -- proportional to $\sqrt{\log p/n}$. 
As $\mathbb{E}[ \bw_k ]= (n/N,\dots, n/N)$, existing work on Lasso estimator \citep{BRT09} implies that a regularization of   $\sqrt{\log p/N}$ suffices for the purpose of variable selection. Instead, we obtain  a regularization of much smaller order to be necessary.     
\end{remark}
In a different context,  \cite{ZDW13} similarly show that  subsampled kernel smoothing with ridge penalty, requires  regularization proportional  to that of the original data, albeit for prediction purposes.



Next, we compare the selection set of one sub-Lasso estimator and the Lasso estimator computed on  the original data  defined as 
\begin{equation}\label{eq:lasso}
\hat {\boldsymbol\beta}(\lambda_n) = \arg \min_{\boldsymbol\beta \in \mathbb{R}^p} \left\{\frac{1}{2n} \| \mathbf Y-\mathbf X \boldsymbol\beta \|^2 + \lambda_n \|\boldsymbol\beta\|_1\right\}.
\end{equation}
Whenever Condition \ref{cond:ir} holds, the Lasso estimator is an oracle estimator; it recovers the correct set with high probability.
We show 
that  whenever the Irrepresentable Condition {\bf IR$(n)$} holds,
 and all $$\lambda _N \in (0,{c}\lambda_n-\frac{c_1}{\sqrt{n}}) \cup ( n c \lambda_n +c^2c_1,  \infty),$$ for some universal constants $c >1$ and  $c_1>0$, the   estimated sets  of ${\hat {\boldsymbol \beta}}_{i:k} (\lambda_N )$ and ${\hat {\boldsymbol \beta}}   (\lambda_n )$ are  nested, with probability converging to $1$. 
 Theorem \ref{thm:21}, in addition to  Condition \ref{cond:ir}, guarantees that  $\hat S \subseteq \hat S_i$, i.e. there exists a constant $c>1$ such that for all $\lambda_N \leq {c}\lambda_n$, 
 \begin{equation}\label{eq:weak}
P(\hat{\boldsymbol\beta} _{i:k,j}(\lambda_N ) \neq  0) 
\geq
P(\hat{\boldsymbol\beta}_{j}(\lambda_n) \neq  0), \ \ \ j \in  S ,  \ \ \ 
\end{equation}
 whereas Theorem \ref{thm:31} and Condition \ref{cond:ir} guarantee $\hat S \supseteq \hat S_i$, i.e. for $\lambda_N \geq nc \lambda_n$,
\begin{equation}\label{eq:weak1}
P(\hat{\boldsymbol\beta} _{i:k,j}(\lambda_N ) \neq  0) 
\leq 
P(\hat{\boldsymbol\beta}_{j}(\lambda_n) \neq  0), \ \ \ j \in   S  . \ \ \ 
\end{equation}
We obtain the following result.

 \begin{theorem}\label{thm:31}
 Let $k=1,\dots, K$ be fixed.
Let $\mathbf w_k$ be a vector of weights that satisfies   Condition  \ref{cond:w}.  Let the error $\boldsymbol\varepsilon$ satisfy Condition \ref{cond:e}.   Assume that matrix $\bf X$ satisfies Condition \ref{cond:ir}
 and that there exists a positive constant $C'$ such that $\lambda_{\max}\left(\frac{1}{n} \mathbf X^T \mathbf X\right) /{\zeta_n} \leq C' $.
  Assume that   the matrix $\mathbf X_{I_i}$ satisfies Condition \ref{cond:re} with $a=3$ and  $i=1,\dots, d$.
Then,  
for  all 
\begin{eqnarray*}  
 {\lambda_N}{} 
&\geq&   (n+1)  \lambda_n  +\frac{\lambda_n C'^{3/2}  s^{3/2}  }{  {\zeta_n^2 \zeta_{N-n}}}
 +
 \frac{\sqrt{2}\lambda_n C'^{3/2}s^{3/2}}{  \zeta_n \zeta_{N-n}^3} 
 + 2 \sigma   \sqrt{ {(n-N) \log p}{ }} +2 \sigma   \sqrt{ { 4 C'   \log p /n}{    }},
\end{eqnarray*} 
    there exists  a constant $c>1$ such that for every $k=1,\cdots, K$,
 $P(S \supseteq \hat S_i(\lambda_N,k)) \geq 1-p^{1-c}$.

\end{theorem}
 
 Hence, the optimal $\lambda_N$, according to the Theorem \ref{thm:31}, is  of the same order as $\sigma \sqrt{\log p}$. In contrast, the optimal $\lambda_N$, according to the Theorem \ref{thm:21},  is of the order of  $\sigma\sqrt{\log p/n}$. Hence, there does not exists a universal choice of $\lambda_N$, for a sub-Lasso estimator to have exact support recovery.

\begin{remark}
This result provides    novel insights into finite sample  equivalents of the asymptotic bias of subagging and ``majority voting" as presented in \cite{BY02}. There the authors    suggests that there is asymptotically  zero probability that the subsampled Lasso (sub-Lasso) and Lasso  estimators  have  the same sparsity patterns (see Theorem 3.3  therein). Theorems \ref{thm:21} and \ref{thm:31} show   show that the  given probability is equal to zero. We show more details in Section 3.4.
\end{remark}
The immediate consequence   is that naive ``multiple vote" estimate does not have a single choice of $\lambda_N$  that achieves variable selection. Details are presented in Theorem \ref{prop:1}.

\subsection{Efficiency and optimality}\label{sec:support}

Next, we state the main theorems on the finite sample variable selection property and optimality of the proposed   minimax voting scheme \eqref{eq:def}.

The bootstrap scheme is said to be efficient if it mimics the ``behavior" of the original data. In this context, ``behavior" can mean many things, like inheriting rates of convergence in the central limit theorem or in the large deviations. 
In this work, we concentrate on two types of efficiency. 
The first considers ``{\it exact sparse recovery}", where $\hat S=S$, for a candidate estimator $\hat S$, with high probability. The second considers ``{\it approximate sparse recovery}", where the focus is on establishing the following two properties simultaneously 
\[
 P \left(  S \subseteq \hat S \right) \geq 1-\delta, \mbox{ for  }\delta \in (0,1) \qquad   E|\hat S \cap S^c|\leq \varepsilon s /p, \mbox{ for }  \varepsilon \in (0,1).
 \]
Typically, $\delta$ and $\varepsilon$ in the above considerations are numbers which are very close to zero.
%
%
%
%

\begin{theorem}\label{thm:final}
Let $\mathbf w_k$ be a vector of weights satisfying  Condition  \ref{cond:w} for all  $k=1,\dots, K$.  Let the error $\boldsymbol\varepsilon$ satisffy Condition \ref{cond:e}. Assume that   the matrix $\mathbf X_{I_i}$ satisfies Condition \ref{cond:re} with $a=3$ and  $i=1,\dots, d$.
Let  the distribution of 
$$\left\{\mathbbm{1} \left(j \in  \cup_{k=1}^K \hat S_i(\lambda_N, k) \right), s+1 \leq j \leq p \right\},$$ be exchangeable for  $\lambda_N$  as in \eqref{eq:lambda11} and all $i=1,\dots,d$. Moreover, let $\| \mathbf X_{I_i}\|_{\infty,\infty} \leq  c_2 \sqrt{ N / \log p  }$, for some constant $c_2>1$ and all $1\leq i \leq d $.
Then, there exists  a  bounded, positive, universal constant $c'>1$ such that, for 
\begin{equation}\label{eq:lambda11}
\frac{1}{c'} \sqrt{\log p/n}\leq \lambda_N \leq c' \sqrt{\log p/n},
\end{equation}
the following holds
\begin{equation}
P(S \subseteq \hat S_\tau(\lambda_N)) \geq 1-p^{1-c'},
\end{equation}
$ \mbox{and}$  
\begin{equation}\label{eq:3.7}
 \sup_{\tau > \frac{1}{2(1+\sqrt{b})}} E\left[ |S^c \cap \hat S_\tau(\lambda_N) |\right]\leq 2  C\frac{\sqrt{b}}{1+\sqrt{b}} \frac{K^m s^m}{N^{m} p^{m-1} \zeta_n^{2m}},
 \end{equation}
  for all  $mb=d$, $dN \leq n$  and a constant $C$ that depends only on $m$.
\end{theorem}

The proof of Theorem \ref{thm:final} is in the Appendix.  
  An attractive feature
of this result is its generality: no {\bf IR$(N)$} assumptions are placed. Yet, it shows that maximal-contrast is able to approximately retrieve the support set $S$. Moreover, it does so for all thresholds $\tau$ that are bigger than $1/4$. 
The upper bound on the number of false positives, see \eqref{eq:3.7}, is a function that decereases with both larger $b$ and larger $m$ as long as $K \leq N p/s$.
We note  that the weights $\bw_k$ are instrumental in obtaining   the theoretical guarantees above. Weights $\bw_k$ are chosen to minimize the out-of-sample prediction error, which in turn allows for a  simultaneous control of false positives and false negatives.

 \begin{remark}
The results of Section 3.1 imply that aggregation of the sub-Lasso estimators is needed; no single sub-Lasso can recover the correct set $S$.  Theorem \ref{thm:final} has immediate implications. As long as  the  sub-Lasso estimators  have the probability of support recovery  $1-\gamma$, with $\gamma \in (0,1/2)$ -- are slightly better than the random guessing -- the proposed    maximal-contrast selection,  \eqref{eq:def}, guarantees that this probability  is very close to 1. 
  \end{remark}
The study of the  proposed  estimator  \eqref{eq:def} is made difficult by  the fact that  we  are aggregating unstable estimators. 
The proof is based on  allowing    deviations in  optimal   convergence rates   for    each of the small subsamples   and finding the smallest such deviation that  allows good variable selection properties of the aggregated estimator.
  The proof is further made challenging as    Condition \ref{cond:w} does not require  $ \sum_{l=1}^N(w_l-1)^2/N$ to converge to a positive constant $c$, independent of $n$, a condition that is usually imposed for weighted bootstrap samples \citep{BGZ97}.  This condition is violated in  our setting, as $c \to 0$.

In the above result, we require an 
 assumption of an exchangeability of indices   $j$ over the  sets $ \cup_{k=1}^K \hat S_i(\lambda_N, k)$   for a   few, small values of $\lambda_N$. 
A similar assumption appeared   in \cite{MB11}. For simplicity of presentation, we do not provide details of the relaxation  of this condition, although we believe  the method of \cite{SS12}   applies.  
 It is not hard to show that a finite sequence of  Bernoulli random variables
$X_1, X_2, . . . , X_n$ is exchangeable if any permutation of the $X_i$'s has the
same distribution as the original sequence. Bayes' postulate, in our context,    implies    
 that the partial sums  have discrete uniform distribution, i.e.
\[
 \mathbb P \left( \sum_{j \in S^c,j=1}^p  \mathbbm{1} \left(j \in  \cup_{k=1}^K \hat S_i(\lambda_N, k) \right)  = h \right) = 1/(h+1),
\]
for all $1 \leq h \leq p-s$. 
Hence, to check the exchangeability assumption we can perform any goodness of fit test, by splitting each subset $ (\mathbf Y_{I_i},\mathbf X_{I_i})$ into two parts, with the first being the training and  the second being the testing set. 
Moreover,
\cite{B13} provide a number of distributional characterizations of exchangeable Bernoulli random variables. In our context, 
\[
 \mathbb P  \left(j \in  \cup_{k=1}^K \hat S_i(\lambda_N, k) \right)   = p_{j,K}(\lambda_N),
\]
if $p_{s+1,K} (\lambda_N)\geq p_{s+2,K }(\lambda_N) \geq \cdots \geq p_{p,K}(\lambda_N) \geq 0$ (where indexing is  up to a permutation) for some $\lambda_N$ \eqref{eq:lambda11}, then the condition of exchangeability is satisfied. In turn, this implies that any column correlation in $\bX$ that decays with the distance between the columns, satisfies our setting. For an approximations to the distribution of a finite, exchangeable
Bernoulli sequence by a mixture of i.i.d. random variables under appropriate
conditions, see \cite{DF80}.

 In the remainder of the section we focus on the optimality of the results obtained in Theorem \ref{thm:final}. Our goal is to provide conditions under which an $\varepsilon$-approximation of the support set $S$ is impossible -- that is, under which there exist two constants $c>0$
 and $c_\varepsilon>0$, such that 
 \begin{itemize}
 \item[(a)] $\inf_{J} \sup_{\boldsymbol\beta  } P_{\boldsymbol\beta}\left( S \not\subseteq J\right) \geq c >0$, and 
 \item[(b)] $\inf_J\sup_{\boldsymbol\beta  } E_{\boldsymbol\beta}|S^c \cap J| \geq c_{\varepsilon}>0$,
 \end{itemize}
 where the infimum is taken over all possible estimators $J$ of $S$ and supremum is taken over all $s$-sparse vectors $\bbeta$. This setup is different from the classical minimax lower bounds obtained for the purposes of sparse estimation \citep{L11} or variable selection \citep{CD12}. Those   are  primarily concerned with $\inf_{J} \sup_{\boldsymbol\beta } P_{\boldsymbol\beta}\left( S \neq J\right)$.    In contrast,  our setting  allows $\varepsilon\%$ false discovery control, in which case $c$  is a  strictly positive number and $c_{\varepsilon} = \varepsilon p /s$.
   To that end,
 let 
 $$\rho_n =  \max \left\{ \frac{ E (\mathbf X_l \mathbf v )^2   }{\sqrt{n} \| \mathbf v\|_2}  :  |S| \leq s, \| \mathbf v\|_0 =s\right\},$$   and  let $ \mathbb{B}_0(s)$ be  the  $l_0$ ball which corresponds to the set of vectors $\bbeta$ with at most
$s$ non-zero elements, that is
\[
 \mathbb{B}_0(s) := \biggl\{ \bbeta \in \mathbb{R}^p: \sum_{j=1}^p 1\{ \bbeta_j \neq 0\} \leq s\biggl\} .
\]

 \begin{theorem}\label{thm:optimal}
 Let Condition \ref{cond:re} hold for the  design matrix $\mathbf X \in \mathbb{R}^{n \times p}$.
 If there exist  two constants $c'>1$ and $1>c_\varepsilon >0$ such that 
\[
 {s^2 \log{ {p}/{s^2}}}{ } >  n\rho_n + {\log 2}  + (1-c') \log p  
\]
\[
  s \log(p/s) +s \log 2 > \max\{ \log c_\varepsilon + n \rho_n , \log c_\varepsilon -\log (1-\sqrt{n \rho_n/2})\}
  \]
 then 
 $$ {\mbox{\rm (a) }} \inf_{ J} \sup_{\boldsymbol\beta \in \mathbb{B}_0(s)} P_{\boldsymbol\beta}\left( S \not\subseteq J\right) \geq p^{1-c'},$$
 $$ { \mbox{\rm (b) } }\inf_J\sup_{\boldsymbol\beta \in  \mathbb{B}_0(s)} E_{\boldsymbol\beta}|S^c \cap J| \geq c_{\varepsilon}>0.$$ 
 \end{theorem}

 \begin{remark}
 A particular example  of   Theorem \ref{thm:optimal}, is the following set of conditions under which $5\%$ false discovery control is impossible. Namely, as long as $n > 2(1+5\%)^2$ and
 $$(s+1) \log p/s  + s\log 2 + \log 20> \max\left\{ n\rho_n,  \log 1/ ( (1-\sqrt{n \rho_n/2}) ) \right\}, $$ then  
 $$\inf_J\sup_{\boldsymbol\beta \in \mathbb{B}_0(s)} E_{\boldsymbol\beta}|S^c \cap J| \geq  5\%\frac{s}{p}.$$
  In light of  Theorem \ref{thm:final} and $mb=d$,  the previous result implies that as long as the number of  subsamples $d$ satisfies the  bound
 \[
 d < 2 C \frac{n}{s} \exp\left\{(1-\frac{1}{m}) \frac{n}{s} \right\} \frac{\zeta_n^2}{K} \left( \frac{p}{s\sqrt{m}}\right)^{1/m}  \]
   the proposed estimator \eqref{eq:def} is efficient for support recovery with $5\%$ false discovery control. In other words, \eqref{eq:def} achieves optimal approximate recovery as    the upper bound in Theorem \ref{thm:final} cannot be further improved.    
   Moreover,  observe that the upper bound on $d$ converges to $\infty$ as $n,p \to \infty$
   \end{remark}

Apart from the minimax optimality,   we show that  the support recovery of the proposed method is more efficient, in comparison to the support recovery of the   Lasso estimator in the classical sense,  of possessing smaller variance in estimation.
\begin{theorem}\label{thm:var}
Let $\pi_j(\lambda_n) =P( \hat \bbeta_j (\lambda_n)\neq 0)$ with $\hat \bbeta $ as the Lasso estimator    \eqref{eq:lasso}.
Then, for small values of $b$  and  for all $\lambda_n > \frac{1}{c} \sqrt{\log p/n}$    and all $j \in S$,
$
\mbox{var}\left(\pi_j^*(\lambda_n)\right) \leq \mbox{var}\left(\pi_j(\lambda_n) \right)  .
$
\end{theorem} 

\subsection{Inefficiency of subagging}\label{sec:subbagging}

Although  properties of the Lasso estimator are well understood (see \cite{B07}, \cite{L08}, \cite{BRT09}), there hasn't been much theoretical support for the variable selection and prediction properties of subagging  of Lasso estimators \citep{B08}, or its close variants \citep{MI13} for the cases of $p \ge n$.
\cite{B08} raised  a question of whether  bagging can be used for  retrieving the support set $S$ and provided a conjecture of its failure for $p \geq n$.
The following theorem   proves this conjecture and shows inefficiency of classical  bagged estimator in such support recovery problems. 

We denote the bootstrap averaging estimator,  bagged Lasso estimator  \citep{B08},   as 
\begin{equation}\label{eq:subagging}
{\hat {\boldsymbol \beta}}^b(\lambda_n^1) = \frac{1}{d} \sum_{i=1}^d \arg \min_{\boldsymbol \beta} \biggl\{  \frac{1}{2N}  \sum_{l \in  I_i } (Y_l-X_l^T\boldsymbol \beta )^2 + \lambda_n^1 \|\boldsymbol \beta\|_1 \biggl\}.
\end{equation}

\begin{theorem}\label{prop:1}
Assume that matrix $\bf X$ satisfies Condition \ref{cond:ir}
 and that there exists a positive constant $C'$ such that $\lambda_{\max}\left(\frac{1}{n} \mathbf X^T \mathbf X\right) /{\zeta_n} \leq C' $.
  Assume that   the matrix $\mathbf X_{I_i}$ satisfies Condition \ref{cond:re} with $a=3$ and  $i=1,\dots, d$. Let  $p >n$. Then, for  every $\lambda_n$, there exist no sequence of $\lambda_n^1$   with $N \leq n$, such that the bagged estimator ${\hat {\boldsymbol \beta}}^b(\lambda_n^1)$ achieves  exact or approximate sparse recovery   of the  Lasso estimator $\hat{\boldsymbol\beta} (\lambda_n)$. 
\end{theorem}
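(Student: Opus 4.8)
\emph{Proof plan.} The plan is to combine a structural description of the support of a bag of Lasso fits with the nesting dichotomy of Section~\ref{sec:sparsity}. \textbf{Step 1 (support of the bag $=$ union of supports).} First I would show that, with probability one over the law of $\boldsymbol\varepsilon$,
\[
\mathrm{supp}\bigl(\hat{\boldsymbol\beta}^b(\lambda_n^1)\bigr)=\bigcup_{i=1}^d \widehat{\mathbb S}_i(\lambda_n^1),\qquad \widehat{\mathbb S}_i(\lambda_n^1):=\mathrm{supp}\Bigl(\arg\min_{\boldsymbol\beta}\bigl\{\tfrac{1}{2N}\sum_{l\in I_i}(Y_l-X_l^T\boldsymbol\beta)^2+\lambda_n^1\|\boldsymbol\beta\|_1\bigr\}\Bigr).
\]
On each of the finitely many polyhedral cells of $\mathbb R^n$ on which all $d$ active sets and sign patterns of the summands in \eqref{eq:subagging} are frozen, every summand is an affine function of $\mathbf Y$; hence the exceptional event that the average has a vanishing coordinate that is nonzero in at least one summand lies in a finite union of affine subspaces of dimension at most $n-1$, a null set for the absolutely continuous noise. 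This reduces the statement to: no union of $d$ subsampled active sets equals, or approximates in the sense that the expected number of extraneous indices is at most $\varepsilon|\hat S(\lambda_n)|/p$, the active set $\hat S(\lambda_n):=\mathrm{supp}(\hat{\boldsymbol\beta}(\lambda_n))$ of the full Lasso \eqref{eq:lasso}.

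\textbf{Step 2 (dichotomy for each block).} Next I would invoke Theorems~\ref{thm:2} and~\ref{thm:3}, applied to the summand in \eqref{eq:subagging}: its $\tfrac{1}{2N}$-normalised unweighted loss coincides with the $\tfrac{1}{2n}$-normalised weighted loss \eqref{eq:smallLasso} for the constant weight $\mathbf w_k\equiv\sqrt{n/N}\,\mathbf 1$, so the Karush--Kuhn--Tucker and out-of-sample arguments of those theorems carry over (only the calibration $\sum_l w_l=n$ of Condition~\ref{cond:w}, unused in the present statement, is relaxed). Together with the nesting discussion around \eqref{eq:weak}--\eqref{eq:weak1}, this gives, for every block $i$ and with probability at least $1-p^{1-c}$: if $\lambda_n^1$ is below a threshold of the order of the right-hand side of \eqref{eq:lambda1}, then $\widehat{\mathbb S}_i(\lambda_n^1)$ strictly contains $\hat S(\lambda_n)$, i.e.\ $\widehat{\mathbb S}_i(\lambda_n^1)\cap\hat S(\lambda_n)^c\neq\emptyset$; whereas if $\lambda_n^1$ is above a threshold of the order of $n\lambda_n$, the penalty overwhelms the detection level $|\hat\beta_{j^\star}(\lambda_n)|\asymp\lambda_n$ of a fixed ``weakest'' coordinate $j^\star\in\hat S(\lambda_n)$, so that $P\bigl(j^\star\in\widehat{\mathbb S}_i(\lambda_n^1)\bigr)\le p^{-c}/d$ uniformly in $i$. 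In the open band separating these thresholds, the nesting argument gives neither inclusion, hence $\widehat{\mathbb S}_i(\lambda_n^1)$ again contains an index outside $\hat S(\lambda_n)$ — the failure of an irrepresentable condition on the small designs $\mathbf X_{I_i}$ being unavoidable when $p>n$ even though $\mathbf X$ satisfies {\bf IR}$(n)$. So every admissible $\lambda_n^1>0$ falls into one of two cases: ``$\widehat{\mathbb S}_i$ has a false positive'' or ``$\widehat{\mathbb S}_i$ misses $j^\star$''.

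\textbf{Step 3 (conclude).} In the first case, $\hat S^b\cap\hat S(\lambda_n)^c=\bigcup_i\bigl(\widehat{\mathbb S}_i\cap\hat S(\lambda_n)^c\bigr)$ is nonempty with probability at least $1-p^{1-c}$, so $E\,|\hat S^b\cap\hat S(\lambda_n)^c|\ge 1-p^{1-c}$, which exceeds $\varepsilon|\hat S(\lambda_n)|/p$ for every $\varepsilon\in(0,1)$ once $p$ is large; this rules out approximate recovery, and a fortiori exact recovery, since it forces $\hat S^b\neq\hat S(\lambda_n)$ with non-negligible probability. In the second case, a union bound over $i=1,\dots,d$ gives $P(j^\star\in\hat S^b)\le p^{-c}$, so $\hat S(\lambda_n)\not\subseteq\hat S^b$ with probability at least $1-p^{-c}$, violating the containment required by both exact and approximate recovery. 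Since the two cases exhaust all $\lambda_n^1>0$ and all $N\le n$, there is no admissible sequence $\lambda_n^1$, which is the assertion; the conjecture of \cite{B08} is recovered as the first case with $p>n$.

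The hard part will be the second case. I must ensure that the missed true coordinate $j^\star$ is \emph{the same for all $d$ blocks} and that $P(j^\star\in\widehat{\mathbb S}_i)$ is small \emph{at the scale $p^{-c}/d$}, not merely bounded away from $1$, since otherwise distinct blocks could miss distinct coordinates and the union still cover $\hat S(\lambda_n)$. This calls for a sharp, subsample-uniform beta-min-type bound for the subsampled Lasso at the false-positive-suppression scale, which I would derive from the out-of-sample estimate in the proof of Theorem~\ref{thm:2} — usable precisely because $\mathbf X_{\not I_i}$ has far more rows than the fitted block — together with the quantitative comparison ``false-positive-suppression threshold $\gg |\hat\beta_{j^\star}(\lambda_n)|$''; it is in this comparison that the hypothesis $p>n$, forcing $\lambda_n$ and hence the detection level of $j^\star$ to be small relative to that threshold, is used decisively.
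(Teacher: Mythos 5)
Your overall strategy differs from the paper's: the paper never decomposes $\mathrm{supp}(\hat{\boldsymbol\beta}^b(\lambda_n^1))$ as a union of block supports, but instead writes down the two KKT-type requirements that the bagged and full-Lasso active sets would have to satisfy simultaneously (the analogues of Lemmas \ref{lem:kkt} and \ref{lem:2}, leading to \eqref{eq:bag1}--\eqref{eq:bag2}), observes that they force both $\lambda_n^1 \lesssim \lambda_n$ and $\lambda_n^1 \gtrsim (n/N)\lambda_n$, which is impossible for $N \ll n$, and disposes of approximate recovery by showing that the perturbation $q$ in the stochastic program \eqref{eq:so1} from the proof of Theorem \ref{thm:final} admits no feasible nonzero value (it would need $q>\sqrt{\log p/N}$ and $q<\sqrt{\log p/n}$ at once). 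Your Step 1, the almost-sure identification of the bag's support with the union of block supports via a polyhedral null-set argument, is a sensible observation the paper does not make, but the route you build on it has two genuine gaps.

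First, in the ``open band'' between your two thresholds you assert that ``the nesting argument gives neither inclusion, hence $\widehat{\mathbb S}_i(\lambda_n^1)$ again contains an index outside $\hat S(\lambda_n)$.'' Theorems \ref{thm:2} and \ref{thm:3} only supply \emph{sufficient} conditions for the two inclusions at the extreme ends of the $\lambda_n^1$ range; in the intermediate band they are silent, and ``no inclusion is guaranteed by the cited theorems'' does not imply ``a false positive occurs.'' Second, the part you yourself flag as hard is a real gap rather than a technicality: to kill the large-$\lambda_n^1$ case you need a single coordinate $j^\star\in\hat S(\lambda_n)$ missed by all $d$ blocks simultaneously, with per-block miss probability $1-p^{-c}/d$, and this requires a lower bound on $|\hat\beta_{j^\star}(\lambda_n)|$ — a beta-min-type statement for the full Lasso — that is not available under the theorem's hypotheses; Conditions \ref{cond:ir} and \ref{cond:re} control neither the magnitude of the selected coefficients from below nor which coordinate is ``weakest.'' Without such a bound, distinct blocks may miss distinct coordinates and their union can still cover $\hat S(\lambda_n)$, so your Step 3 does not close in that case. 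The paper's contradictory-tuning-parameter route avoids both issues by never needing to localize which coordinate fails.
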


Note that  Theorem \ref{prop:1}  holds without  imposing {\bf IR}($N$) condition on every  subsampled dataset.
 This   result is in line with the expected disadvantages of subagging  \citep{BY02} but is somehow surprising. The result also   extends  \cite{B08}.
 Furthermore, it demonstrates
  that even the bagged Lasso estimator (with $N=n/2$) for diverging $n$ and $p$ ($p > n$)  fails to exactly recover the sparsity set $S$, when at least one of the {\bf IR}($N$) conditions is violated. 
However,  for $p \leq n$ and   $N = n$, the situation reverses; the bagged estimator recovers exactly the true sparsity set $S$ with probability very close to $1$  (proof is presented in the Appendix).   
Although our primary concern is the support recovery, we present a result on the prediction error of the subagged estimator \eqref{eq:subagging}.   Moreover, it seems to be new and of interest in and of its own.
We show that, only    the choices of $N \geq n/4$,  allow subagging to attain  the same predictive properties as guaranteed by the Lasso \eqref{eq:lasso}.

\begin{theorem}\label{prop:bootstrap}
Let $\mathbf X_b$ be a matrix obtained by the bootstrap sampling of the  rows of the design matrix $\mathbf X$.
Assume that  $\mathbf X_b$ satisfies Condition \ref{cond:re},  with $a=3$.  Then the estimator $\hat{\boldsymbol\beta}^b(\lambda_n^1)$, defined on the bootstrapped sample of size $n/k$ with $k \leq 4$ and   with  $\lambda_n^1=k\sigma\sqrt{\log p/n}$, attains the  $l_2$ prediction error of  the  Lasso estimator $\hat{\boldsymbol\beta} (\lambda_n)$. 
\end{theorem}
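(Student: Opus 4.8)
The plan is to (i) reduce the prediction error of the average $\hat{\boldsymbol\beta}^b(\lambda_n^1)$ to the average of the prediction errors of the $d$ bootstrap Lasso summands by convexity, (ii) bound each summand by the usual Lasso fast rate on a bootstrap subsample of size $N=n/k$, (iii) transfer the resulting in-sample bound to the full design $\mathbf X$ via the out-of-sample mechanism already used in the proof of Theorem~\ref{thm:2}, and (iv) close with a union bound. Write $\mathbf v_i:=\hat{\boldsymbol\beta}_i-\boldsymbol\beta^*$ for the error of the $i$-th summand in \eqref{eq:subagging}, so $\hat{\boldsymbol\beta}^b(\lambda_n^1)-\boldsymbol\beta^*=d^{-1}\sum_{i=1}^d\mathbf v_i$. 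Since $\mathbf z\mapsto\|\mathbf X\mathbf z\|^2$ is convex, Jensen gives $\tfrac1n\|\mathbf X(\hat{\boldsymbol\beta}^b(\lambda_n^1)-\boldsymbol\beta^*)\|^2\le d^{-1}\sum_{i=1}^d\tfrac1n\|\mathbf X\mathbf v_i\|^2$, and since the cone $\mathbb{C}(3,S)$ of Condition~\ref{cond:re} is convex, once each $\mathbf v_i\in\mathbb{C}(3,S)$ we also have $\hat{\boldsymbol\beta}^b(\lambda_n^1)-\boldsymbol\beta^*\in\mathbb{C}(3,S)$. It thus suffices to bound one $\tfrac1n\|\mathbf X\mathbf v_i\|^2$ by a constant multiple of the Lasso prediction error, which under Conditions~\ref{cond:ir}--\ref{cond:re} with $\lambda_n\asymp\sigma\sqrt{\log p/n}$ is of order $s\lambda_n^2/\zeta_n$.

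Fix $i$ and work on $\mathcal E_i=\{\,\|N^{-1}\mathbf X_{I_i}^T\boldsymbol\varepsilon_{I_i}\|_\infty\le\tfrac12\lambda_n^1\,\}$. On $\mathcal E_i$ the Lasso basic inequality (see \cite{BRT09}) gives $\mathbf v_i\in\mathbb{C}(3,S)$ and $N^{-1}\|\mathbf X_{I_i}\mathbf v_i\|^2\le 3\lambda_n^1\|\mathbf v_{i,S}\|_1\le 3\lambda_n^1\sqrt s\,\|\mathbf v_{i,S}\|_2$; combining with the assumed {\bf RE}$(\mathbf X_b)$ condition (Condition~\ref{cond:re} with $\mathbf w_k=\mathbf 1$) to lower-bound $N^{-1}\|\mathbf X_{I_i}\mathbf v_i\|^2$ by a multiple of $\zeta_N\|\mathbf v_{i,S}\|_2^2$ yields $N^{-1}\|\mathbf X_{I_i}\mathbf v_i\|^2\lesssim s(\lambda_n^1)^2/\zeta_N^2$ and $\|\mathbf v_i\|_2\lesssim\sqrt s\,\lambda_n^1/\zeta_N$. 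The sub-exponential moment bound on $\varepsilon$ from \eqref{eq:model}, the unit-norm columns of $\mathbf X$, and a union bound over $j\le p$ give $P(\mathcal E_i)\ge 1-p^{1-c}$, $c>1$, provided $\lambda_n^1$ exceeds a fixed multiple of $\sigma\sqrt{\log p/N}=\sqrt k\,\sigma\sqrt{\log p/n}$. This is exactly where the restriction $k\le 4$, i.e.\ $N\ge n/4$, is used: because $k$ is a bounded constant, the \emph{fixed} choice $\lambda_n^1=k\sigma\sqrt{\log p/n}$ is simultaneously (a) large enough to dominate the subsample noise, since $k\ge\sqrt k$, and (b) small enough that $(\lambda_n^1)^2=k^2\sigma^2\log p/n\le 16\,\sigma^2\log p/n\asymp\lambda_n^2$, so the subsample bound $s(\lambda_n^1)^2/\zeta_N^2$ is of the same order as the Lasso bound $s\lambda_n^2/\zeta_n$ — the RE constants $\zeta_N$ and $\zeta_n$ being comparable precisely because $N\ge n/4$.

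It remains to pass from $N^{-1}\|\mathbf X_{I_i}\mathbf v_i\|^2$ to $n^{-1}\|\mathbf X\mathbf v_i\|^2$. The clean route uses $E_{\mathrm{boot}}[N^{-1}\mathbf X_{I_i}^T\mathbf X_{I_i}]=n^{-1}\mathbf X^T\mathbf X$ together with a restricted-set concentration inequality for the bootstrap Gram matrix over $\mathbb{C}(3,S)$ — the same phenomenon that in the proof of Theorem~\ref{thm:2} makes $\langle\mathbf X_{I_i^c},\mathbf Y_{I_i^c}-\hat{\mathbf Y}_{I_i^c}\rangle$ a controlled linear function of $\lambda_{\max}(\mathbf X_{I_i^c}^T\mathbf X_{I_i^c})$, exploiting that the out-of-sample block of rows has order $n$ since $N\le n$ — to obtain $n^{-1}\|\mathbf X\mathbf v_i\|^2\le(1+\delta_k)\,N^{-1}\|\mathbf X_{I_i}\mathbf v_i\|^2$ on an event of probability $\ge 1-p^{1-c}$ with $\delta_k$ bounded when $N\ge n/4$; a cruder alternative, sufficient up to an extra $s$ factor, simply bounds $n^{-1}\|\mathbf X\mathbf v_i\|^2$ by a restricted maximal eigenvalue of $\mathbf X$ over $\mathbb{C}(3,S)$ times $\|\mathbf v_i\|_2^2$, using $\|\mathbf v_i\|_1\le 4\sqrt s\|\mathbf v_i\|_2$. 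A union bound over $i=1,\dots,d$ over the events $\mathcal E_i$ and the transfer events (with $c$, or the numerical constants, chosen so the total failure probability remains negligible after multiplying by the polynomially many $d$), together with the convexity reduction, then gives $\tfrac1n\|\mathbf X(\hat{\boldsymbol\beta}^b(\lambda_n^1)-\boldsymbol\beta^*)\|^2\lesssim s\lambda_n^2/\zeta_n$ up to a constant depending only on $k\le 4$ and the RE/restricted-maximal-eigenvalue constants — i.e.\ the $l_2$ prediction error of the Lasso estimator \eqref{eq:lasso}.

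The main obstacle is the transfer step together with pinning down the exact cutoff $k\le 4$: one must show that the constant comparing the bootstrap-subsample restricted quadratic form with the full-sample one (equivalently, the ratio of $\zeta_N$, or of the restricted maximal eigenvalue of $\mathbf X_{I_i}$, to the corresponding quantity for $\mathbf X$) stays bounded for $N\ge n/4$ but deteriorates for smaller subsamples, so that the averaging in \eqref{eq:subagging} can no longer reproduce the Lasso prediction rate below that threshold. Converting this from an order-of-magnitude comparison into the stated sharp constant — and verifying that the $k^2$ inflation from the prescribed $\lambda_n^1$ and the fluctuation factor $\delta_k$ are jointly controlled exactly when $k\le 4$ — is where the real work lies; everything else is the standard Lasso argument plus Jensen.
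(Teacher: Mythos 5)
Your core mechanism coincides with the paper's: apply the standard \cite{BRT09} analysis to the Lasso on the bootstrap subsample of size $N=n/k$ under {\bf RE}$(\mathbf X_b)$ to get a rate of order $s(\lambda_n^1)^2/\zeta_N^2$, and then argue that for $k\le 4$ this is within a bounded constant of the full-sample Lasso rate. Where you diverge is in the second half. The paper never transfers the subsample prediction error to the full design and never averages over the $d$ summands; it simply compares the two rate \emph{expressions} algebraically, bounding $\delta_n=\bigl|\lambda_N/\zeta_N^2-\lambda_n/\zeta_n^2\bigr|$ by writing $\zeta_n^2-\zeta_N^2=\epsilon_n$ and using the identity $\sqrt{k}/(1-\epsilon_n/\zeta_n^2)-1=(1+\epsilon_n/\zeta_n^2)/(1-\epsilon_n/\zeta_n^2)$ at $k=4$ together with $\epsilon_n<\eta\zeta_n^2$ from the RE condition; that algebraic identity is the paper's entire reason for the cutoff $k\le4$. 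You instead (i) reduce the error of the average in \eqref{eq:subagging} to the average of errors by Jensen, and (ii) propose an explicit in-sample-to-full-sample transfer via concentration of the bootstrap Gram matrix over the cone, locating the role of $k\le4$ in the inflation $(\lambda_n^1)^2=k^2\sigma^2\log p/n\le 16\,\sigma^2\log p/n$. Your route is more faithful to what ``prediction error'' of the averaged estimator \eqref{eq:subagging} should mean, and it makes explicit two steps the paper silently skips; the paper's route is shorter because it only ever compares upper bounds, at the cost of never actually controlling $n^{-1}\|\mathbf X(\hat{\boldsymbol\beta}^b-\boldsymbol\beta^*)\|^2$. Neither argument genuinely establishes that the threshold $k\le4$ is sharp — in both cases it enters only through a choice of numerical constant — so the ``real work'' you flag at the end is also absent from the paper.

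One small repair: the cone $\mathbb{C}(3,S)=\{\mathbf v:\|\mathbf v_{S^c}\|_1\le 3\|\mathbf v_S\|_1\}$ is \emph{not} convex (cancellation on the $S$-coordinates can shrink $\|\mathbf v_S\|_1$ under averaging while $\|\mathbf v_{S^c}\|_1$ only averages), so you cannot conclude that $\hat{\boldsymbol\beta}^b(\lambda_n^1)-\boldsymbol\beta^*\in\mathbb{C}(3,S)$ from membership of each $\mathbf v_i$. This does not damage your main chain, since the Jensen step needs only convexity of $\mathbf z\mapsto\|\mathbf X\mathbf z\|^2$ and your per-summand bounds are applied to the individual $\mathbf v_i$, but the remark should be deleted or replaced (e.g., work with the bound $\|\mathbf v_i\|_1\le 4\sqrt{s}\,\|\mathbf v_{i}\|_2$ summand by summand, as you do in the ``cruder alternative'').
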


\section{Numerical Studies}\label{sec:examples}

In this section, we compare statistical finite sample properties of the proposed weighted maximal-contrast subagging with that of the existing methods via extensive simulation study.
For   experiments that include both Gaussian models and  skewed  models, we study the variable selection and convergence properties of of the maximum-contrast selection. 
We consider  traditional subbaging, stability selection  and the Lasso  estimator for comparison.  The  threshold level $\tau$ is taken to be $1,0.8,0.5$ and $0.3$.
 
{\it The choice of $\lambda_N$}.
We note that the traditional  cross-validation  or information criterion methods,  computed  within each of the blocks $I_i$,   fail   to provide the   $\lambda_N$ of the   scale $\sqrt{\log p/n}$ critical for good support recovery.  We propose an alternative,   block cross-validation statistics, which sets $\lambda$ to be the argument minimum of 
\begin{eqnarray*}
  \sum_{k,k{'}=1;i,i{'}=1}^{K,d}    \frac{ \bigl \| \bD_{\sqrt{\mathbf w_k}}   \mathbf Y_{I_i} - \bD_{\sqrt{\mathbf w_k}}\mathbf X_{I_i} {\boldsymbol\beta} _{i' :k'}(\lambda ) \bigl \|^2 }{  (1-\hat{s}'/n)^2} - \frac{\bigl \| \bD_{\sqrt{\mathbf w_k' }}\mathbf Y_{ i{'}_n} -\bD_{\sqrt{\mathbf w_k'}} \mathbf X_{ I_{i^{'}}} {\boldsymbol\beta} _{i:k}(\lambda ) \bigl \|^2 }{  (1-\hat s/n)^2  },
 \end{eqnarray*}
 with ${\boldsymbol\beta} _{i':k'}(\lambda) $ being the estimator \eqref{eq:def} computed using only the data $I_{m'}^{b'}$ and with $k'$-th realization of the weight vector $\mathbf w_k$.
 The statistic above  measures discrepancy in  out-of sample prediction error between different sub-samples. Proof  of Theorem \ref{thm:final} shows guarantees of such a procedure.

{\it{The choice of $b$ and $m$}.}
We want to choose $b$ and  $m$  in order to obtain the best possible performance
bounds as described in Section 3 above. Smaller values reduce  the computational cost.  However, $b$ and $m$ cannot be chosen small  simultaneously as $d=bm$ is fixed. Moreover, the
the performance bounds rely on  the definition \eqref{eq:def}, whose strength increases as $b$  increases.
Hence  we want to choose $b$ large enough that this  estimator has good averaging properties and such that $m$ is not too large.
In Section \ref{sec:b} below we see that the random  maximum-contrast method is quite robust to the
choice of $b$, and that small values of $b$ suffice. We fix $b=3$ in Sections 4.1 and 4.2 and perform sensitivity analysis in Section 4.3.

{\it{The choice of $K$}.}
In order to minimize the second term in the bound in Theorem 3, we should choose $K$ to
be as small as possible. However, the  first term in the bound in Theorem 3, weakens  with  smaller $K$.
Moreover,  the computational cost of the
estimator  scales linearly with $K$.  In practice, however, we found
that   the proposed method was
robust to the choice of $K \leq m$. In all of our simulations, we vary $K=1,3, 10$ and recommend a universal choice $K=3$.

\subsection{Linear Gaussian Model}

In this example, we measure performance of the proposed method through true positive (TP) and  false positive (FP)   error control. In order  to compare performance  we consider a simple linear model
\[
Y_i = \bX_i^T \bbeta^* + \varepsilon _i
\]
with  the varying sample size  $n$. The $\varepsilon_i$s are  generated as independent, standard  Gaussian components.
We fix    the  feature size to be $p=4000$. We perform three different studies within this model. For each given $n$, $\bbeta^*$, $\bX$ and $\bY$ we generate  100 testing datasets independently and report average metric of interest. We study both the variable section properties of the proposed method and the prediction properties.  We set the $\lambda_N$s according to the block cross-validation statistics proposed above.

We compare the proposed method with two variable selection procedures: 
\begin{itemize}
\item The traditional Lasso estimator. We apply the  Lasso estimator directly to the original   data of size $n$. We set the $\lambda_n$ by the  self-tuned cross-validation statistic.
\item 
The traditional subagging with the majority vote. We set each of the $\lambda_N$s according to the traditional cross-validation statistics as advocated by \cite{B08}.
\end{itemize}

 We consider three models:
\begin{itemize}
\item[-] {\it Model 1:} The design matrix  is such that each $\mathbf X_i$  has a  multivariate normal distribution independently, with  a Toeplitz covariance matrix $\bSigma$ such that $\bSigma_{ij}= 0.5^{|i-j|}$. $\bbeta^*$  is a sparse vector in which the first $10$ elements are equal to $0.7$ and the rest are equal to $0$. 
\item[-] {\it Model 2:} The design matrix is such that,  columns $\mathbf x^j$ satisfy 
\begin{align*}
\mathbf x^j& = 0.2* \bG + \mathbf Z \qquad j=1,\dots, 10
\\
\mathbf x^j& = 0.2* \bG + \mathbf W \qquad j=11,\dots, 20
\\
\mathbf x^j& = 0.2* \bG + \mathbf T \qquad j=21,\dots, 30
\\
\mathbf x^j& = \mathbf R \qquad j=30,\dots, p
\end{align*}
where $\bG, \mathbf {Z}, \bW,\bT,\bR$ are independent standard normal vectors. In this model,
we have three equally important groups, and within each group there are ten members.
$\bbeta^*$  is a sparse vector in which the first $30$ elements are equal to $3$ and the rest are equal to $0$. 

\item[-] {\it Model 3:}  Each row of the design matrix, $\mathbf X_i$,  has a  multivariate normal distribution independently, with  the covariance matrix $\bSigma$. $\bSigma$ is a block-diagonal matrix. Its upper left $60\times60$ block is an
equal correlation matrix with $\rho=0.4$; and its lower right $(p-60)\times(p-60)$ block
is an identity matrix.   $\bbeta^*$  is a sparse vector in which the first $30$ elements are equal to $3$ and the rest are equal to $0$.  In this model,  the cross-correlation between the noise and signal columns is non-zero. 

\end{itemize}



 We show results with varying subsample size  
 $$n=10000, \qquad N =n^{\gamma}, \qquad \gamma=\{0.2,0.4,0.45,0.47,0.485,0.5,0.52,0.55,0.6,0.65,0.75,0.8\}.$$ 
 For $\gamma \in(0.2,0.485)$, $N < s \log p $.
 We set  $b$ and $m$ to be $3$ and $3$, respectively and 
  explore  a number of different choices of the parameters $\tau, K$:
  \begin{itemize}
  \item[-] {\it $K=1$} with $\tau = 1,0.8,0.5$. The choice of $K=1,\tau=0.5$ corresponds to a majority vote scheme, where each estimator is randomized and the decision is made according to the most liberal vote. In contrast, $K=1,\tau=1$ corresponds to a conservative vote (the worst case bound), where only elements that have appeared in all of the sets are kept.
  \item[-] {\it $K=3$} with $\tau = 1,0.8,0.5.$ The choice of $K=3,\tau=0.5$ corresponds to a majority vote scheme, where each estimator is randomized three times and the only elements kept are those  that have appeared in the most of  unions, of the three estimated sets.
  \item[-] {\it $K=10$} with $\tau = 1,0.8,0.5.$ The choice of $K=10,\tau=0.5$ corresponds to the most liberal majority voting scheme, as all elements that have appeared in half of the unions, of the ten estimated sets, are kept.
  \end{itemize}


Obtained results are summarized in the Figure   \ref{fig:fig1a}.
In all three models, the maximum-contrast selection outperforms the  traditional majority voting of the subagging estimator in terms of  variable selection. In Models 1-3, the Lasso estimator acts as an oracle estimator. Nevertheless,  the maximum-contrast selection  is still competitive and achieves perfect recovery in Model 1, for  all the subsamples larger than $\sqrt{n}=100$.  For the case of Model 2, the maximum-contrast estimator requires larger subsample sizes in order to gain perfect recovery; size of $n^{0.65}\approx400$ are needed. In Model 2, the group structure favors methods with larger number of random draws $\mathbf w_k$; $K=10$ achieves the best performance. We see that the performance of the subagging is unsatisfactory no matter of the subsample size.  In Model 3,  the design matrix has a small correlation between the noise and signal variables. Nevertheless, this  correlation does not  affect the performance of the maximum-contrast selection estimator. In all three models, the number of falsely selected components of the maximum-contrast selection estimator is negligible. 

%
%

 \begin{figure}[htbp]
 \centering
 \includegraphics[width=7.5cm,height=4cm]{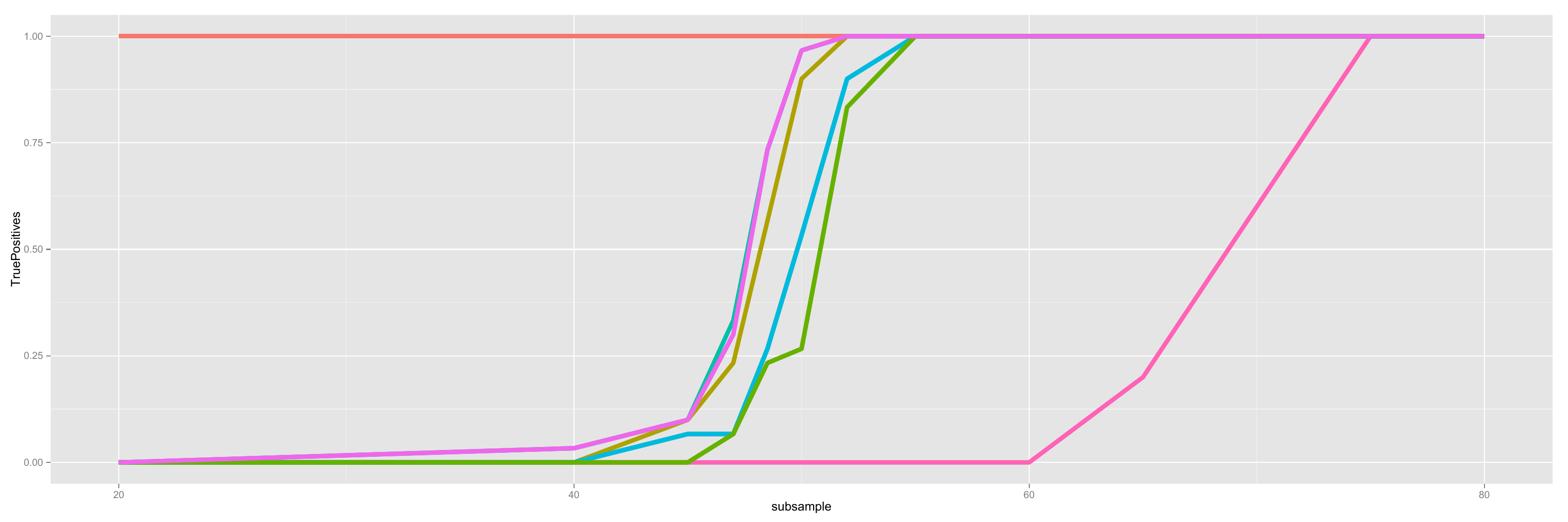}
  \includegraphics[width=7.5cm,height=4cm]{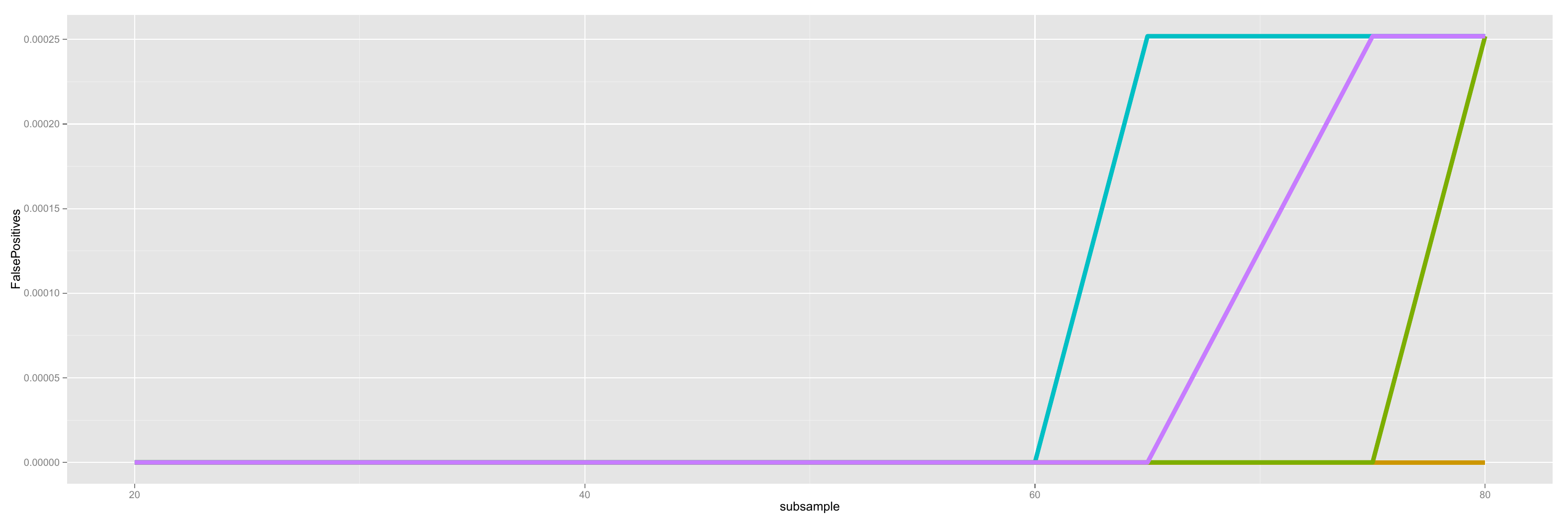}
 \includegraphics[width=7.5cm,height=4cm]{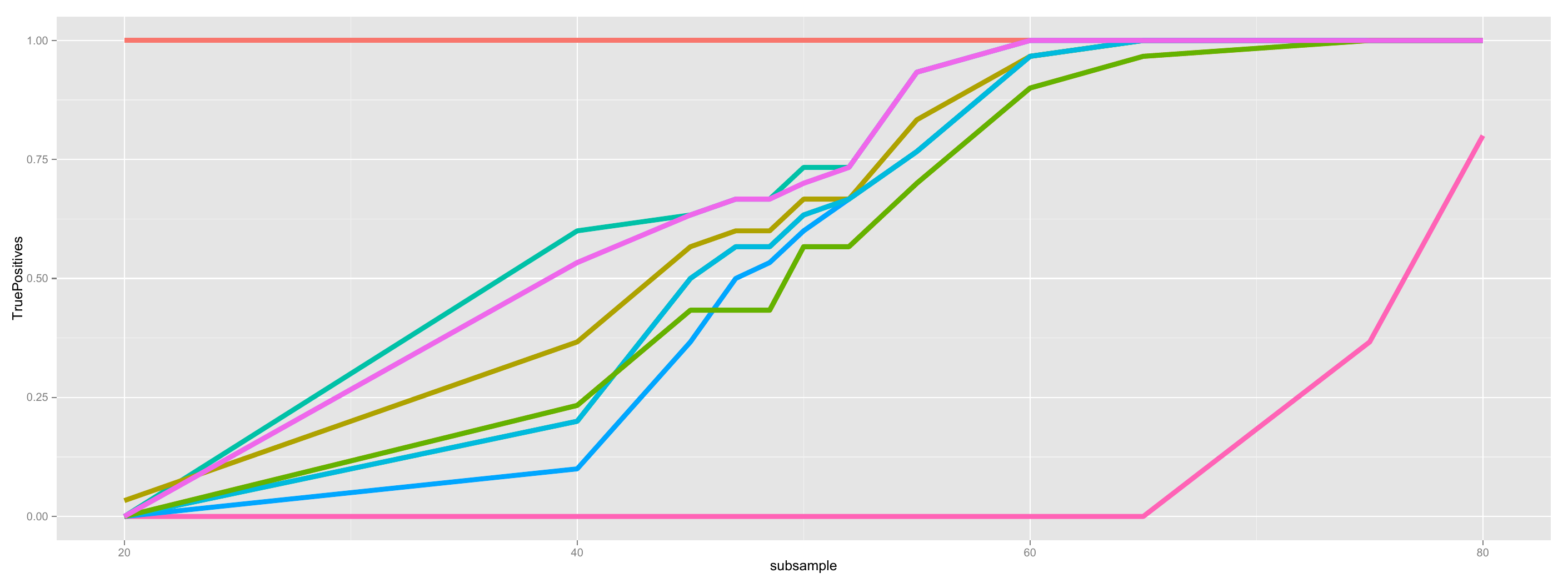}
 \includegraphics[width=7.5cm,height=4cm]{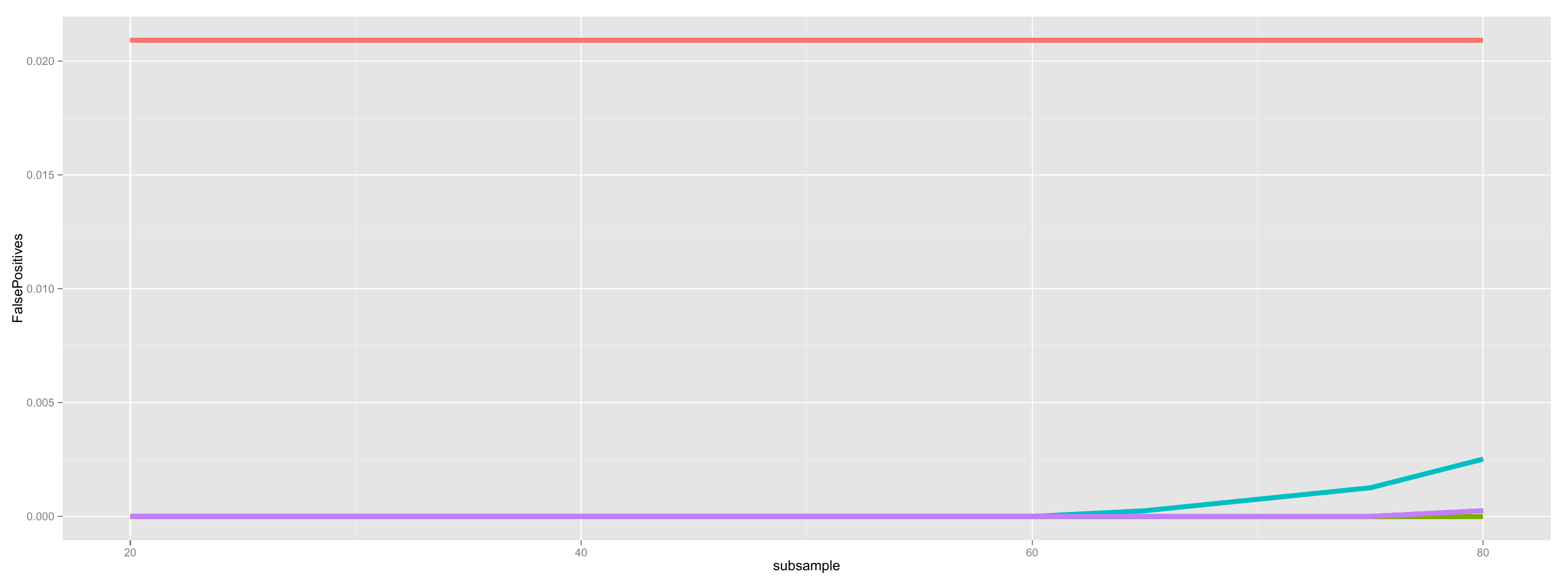}
 \includegraphics[width=7.5cm,height=4cm]{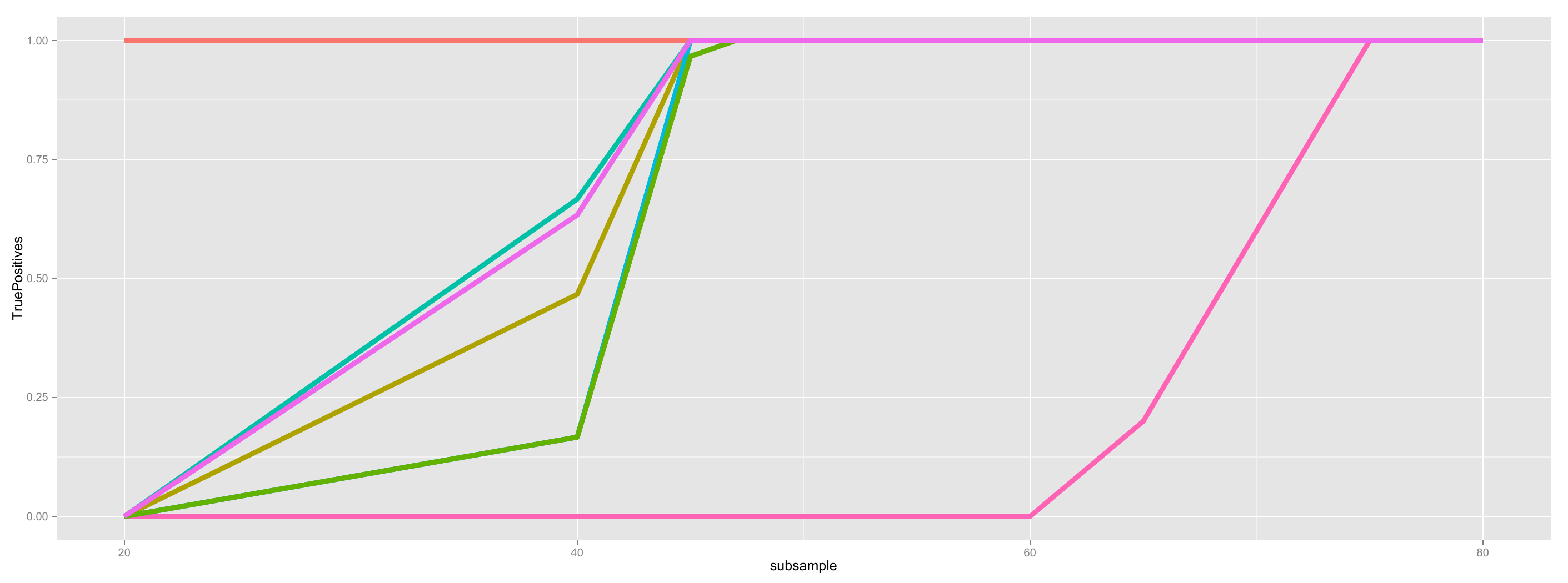}
  \includegraphics[width=7.5cm,height=4cm]{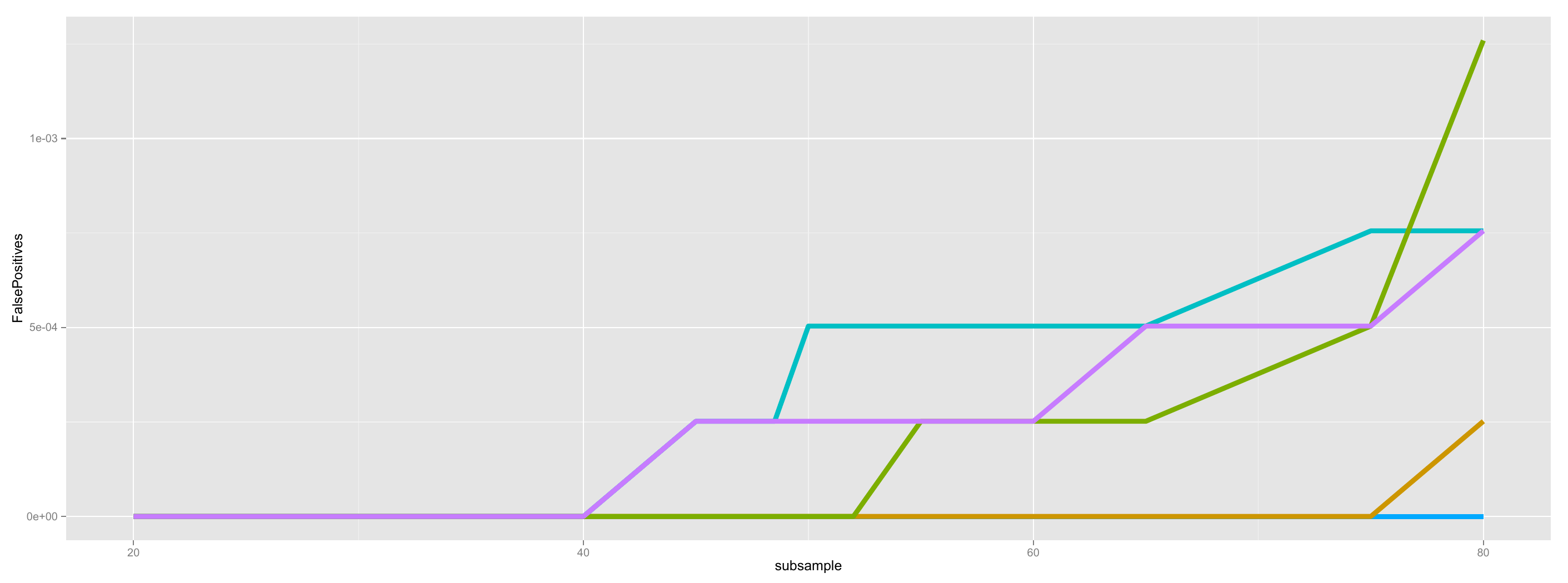}
  \caption{X-axes represents the  subsample size $N$ on a logarithmic scale and the Y-axes represents the average selection probability of the signal variables (the first column) and of the noise variables (the second column).  The first row is according to Model 1, the second according to Model 2 and the third according to Model 3.  Colors represent: red-Lasso, pink-subagging and minimax\eqref{eq:def} with  brown-- $K=1,\tau=1$;
dark-brown--$K=1,\tau=1/2$;
dark-green--$K=1,\tau=4/5$;
  green--$K=10,\tau=1$;
green-blue--$K=10,\tau=1/2$;
 light-blue--$K=10,\tau=4/5$;
 blue--$K=3,\tau=1$ ;
purple--$K=3,\tau=1/2$; 
dark-pink--$K=3,\tau=4/5$. 
   }\label{fig:fig1a}
  \end{figure}

We complement the results presented in the Figure \ref{fig:fig1a}  with the results presented in Tables \ref{tab:1} and \ref{tab:2}. There we fix $N = 251$ and show average of the true  and false positive results for varying choices of the tuning parameter $\lambda_N$.

\begin{sidewaystable}[htbp]
\centering
\begin{tabular}{c||ccc|c|c|c|c|c|c|c|c|c  } 
\hline
\multicolumn{2}{ c} {$\lambda_N$}  &  $0.001$ & $0.005$ & $0.008$ & $0.01$ & $0.03$ &0.05 & 0.1 & 0.3 & 0.9 & 1.5& 1.9\\
\hline
\hline
\multirow{2}{*}{Lasso} & TP &  1 & 1&1&1&1&1&1&1&1 &1&1  \\
 & FP & 0.94 & 0.79&0.66&0.59&0.12&0.005 &0 & 0 &0&0&0   \\ \hline
\multirow{2}{*}{Subagging} & TP &  1 & 1&1&1&1&1&1&1&1 & 0.93&0.83 \\
 & FP &   0&0&0&0&0&0&0&0&0&0 &0\\ \hline
\multirow{2}{*}{WMCB, $K=1$, $\tau=1$} & TP &  1&1&1&1&1&1&1&1&1 &  0.76&0.46\\
 & FP &  0.82&0.01&0.008&0.007&0 &0&0&0&0&0&0\\
\hline
\multirow{2}{*}{WMCB, $K=3$, $\tau=1$} & TP &  1&1&1&1&1&1&1&1&1&0.90& 0.53\\
 & FP &  0.99 & 0.02& 0.01&0.01&0 &0&0&0&0&0&0 \\
\hline
\multirow{2}{*}{WMCB, $K=10$, $\tau=1$} & TP &  1&1&1&1&1&1&1&1&1&0.96& 0.6\\
 & FP &   0.99 & 0.05& 0.02&0.01&0.001 &0&0&0&0&0&0  \\
\hline
\multirow{2}{*}{WMCB, $K=1$, $\tau=0.5$} & TP &  1 & 1&1&1&1&1&1&1&1 &1&0.93\\
 & FP &    0.99 & 0.24& 0.20&0.16&0.09 &0.07&0.04&0.004&0&0&0\\
\hline
\multirow{2}{*}{WMCB, $K=3$, $\tau=0.5$} & TP & 1 & 1&1&1&1&1&1&1&1 &1&1 \\
 & FP &  0.99 & 0.47& 0.37&0.29&0.13 &0.08&0.04&0.003&0&0&0 \\
\hline
\multirow{2}{*}{WMCB, $K=10$, $\tau=0.5$} & TP &1 & 1&1&1&1&1&1&1&1 &1&1  \\
 & FP &   0.99 & 0.6& 0.47&0.36&0.16 &0.11&0.06&0.004&0&0&0 \\
\hline  
\end{tabular}
\caption{Average number of true (TP) and false positives (FP) over $100$ replication of a Gaussian linear model with Toeplitz design with $\rho=0.2$ and $\bbeta^*=(1^T,0^T)^T$. The model has $p=1000$ and $n=10000$. Subagging and Weighted Maximum-Contrast selection use $N =251$ and $b=3$, $m=3$.} \label{tab:1}
\end{sidewaystable}

\begin{sidewaystable}[htbp]
\begin{tabular}{c||ccc|c|c|c|c|c|c|c|c|c  } 
\hline
\multicolumn{2}{ c} {$\lambda_N$}  &  $0.001$ & $0.005$ & $0.008$ & $0.01$ & $0.03$ &0.05 & 0.1 & 0.3 & 0.9 & 1.5& 1.9\\
\hline
\hline
\multirow{2}{*}{Lasso} & TP &  1 & 1&1&1&1&1&1&1&1 &0&0  \\
 & FP & 0.88 & 0.59&0.42&0.29&0.002&0 &0 & 0 &0&0&0   \\ \hline
\multirow{2}{*}{Subagging} & TP &  0.76 & 0.86&0.86&0.86&0.86&0.86&0.83&0.63&0& 0&0 \\
 & FP &   0&0&0&0&0&0&0&0&0&0 &0\\ \hline
\multirow{2}{*}{WMCB, $K=1$, $\tau=1$} & TP &  1&1&0.96&0.96&0.96&0.96&0.93&0.4&0 &  0&0\\
 & FP &  0.01&0.003&0.001&0.001&0 &0&0&0&0&0&0\\
\hline
\multirow{2}{*}{WMCB, $K=3$, $\tau=1$} & TP & 1&1&0.96&0.96&0.96&0.96&0.96&0.46&0 &  0&0\\
 & FP &  0.02 & 0.001& 0&0&0 &0&0&0&0&0&0 \\
\hline
\multirow{2}{*}{WMCB, $K=10$, $\tau=1$} & TP &  1&1&0.97&0.98&0.98&0.99&0.97&0.97&0.97&0.97& 0.5\\
 & FP &    0.04 & 0.003& 0.002&0.001&0.002 &0.001&0&0&0&0&0  \\
\hline
\multirow{2}{*}{WMCB, $K=1$, $\tau=0.5$} & TP &  1 & 1&1&1&1&1&1&0.9&0 &0&0\\
 & FP &    0.25& 0.14& 0.11&0.12&0.06 &0.04&0.003&0.001&0&0&0\\
\hline
\multirow{2}{*}{WMCB, $K=3$, $\tau=0.5$} & TP &1 & 1&1&1&1&1&1&0.96&0 &0&0 \\
 & FP &  0.46 & 0.18& 0.13&0.13&0.06 &0.05&0.01&0&0&0&0 \\
\hline
\multirow{2}{*}{WMCB, $K=10$, $\tau=0.5$} & TP &1 & 1&1&1&1&1&1&1&1 &0&0  \\
 & FP &   0.56 & 0.22& 0.16&0.17&0.08 &0.09&0.07&0.02&0&0&0 \\
\hline  
\end{tabular}
\caption{Average number of true (TP) and false positives (FP) over $100$ replication of a Gaussian linear model with Toeplitz design with $\rho=0.2$ and $\bbeta^*=(0.3^T,0^T)^T$. The model has $p=1000$ and $n=10000$. Subagging and Weighted Maximum-Contrast selection use $N =251$ and $b=3$, $m=3$.}\label{tab:2}
\end{sidewaystable}

\subsection{Skewed Linear Model}

In the next example, we consider two settings that depart  from simple dependency  and normality assumptions.  We consider the same simple linear model as above. Parameter choices are  made by   the same choices as in the Models 1-3 above: $n=10000$, $p=4000$.

 We consider three additional models:
\begin{itemize}
\item[-] {\it Model 4:} The design matrix    has a  multivariate Student distribution, with the covariance matrix $\bSigma$ from the Model 3 above. $\bbeta^*$  is a sparse vector in which the first $30$ elements are equal to $3$ and the rest are equal to $0$. The $\varepsilon_i$s are  generated as independent, standard  Gaussian components.
\item[-] {\it Model 5:} The design matrix is such that, $X_{ij}$'s  are drawn from the Beta distribution with parameters $1+10\frac{j-1}{p-1}$ and $2$ independently for $j=1,\dots,p$. The regression model  is  calibrated to have mean zero, but the distribution of $\bX_i$ is skewed with skewness that varies across dimensions.
$\bbeta^*$  is a sparse vector in which the first $10$ elements are equal to $1,2, 3, \dots, 10$ and the rest are equal to $0$. The $\varepsilon_i$s are  generated as independent, standard  Gaussian components.

\item[-] {\it Model 6:}  The design matrix  is such that each $\mathbf X_i$  has a  multivariate normal distribution independently, with  the  covariance matrix $\bSigma$.
$\bSigma$ is the covariance
matrix of a fractional white noise process, where the difference
parameter $l=0.2$. In other words, $\bSigma$ has a polynomial off-diagonal
decay, $\bSigma_{ij} = \mathcal{O}(|i-j|^{1-2 l})$.
 $\bbeta^*$  is a sparse vector in which the first $10$ elements are equal to $1$ and the rest are equal to $0$. The $\varepsilon_i$s are  generated as independent   components with Student $t$ distribution with  $3$ degrees of freedom.

\end{itemize}

Figures \ref{fig:fig1e} shows results for the regression setting under the three  above models.
 The Lasso estimator is no longer  the oracle estimator for all the models; it is an oracle just for the Model 4 where we observe same patters as in Models 1-3.
Model 5 is particularly difficult, as column correlations depend through dimensionality $p$.  Also,
 the maximum-contrast estimator is better than the Lasso (for example, $0.8$ versus $0.3$, of the average true positive rate in Model 5), which illustrates the advantage
of  maximum-contrast estimator for correlated or skewed settings.
Model 6 is a challenging model, and we observe that the Lasso estimator fails to recover the correct set of variables. Nevertheless,  the maximum-contrast estimator achieves a perfect recovery for all subsample larger than $\sqrt{n}=100$, which illustrates the additional advantage
of  maximum-contrast estimator for correlated designs.
 Additionally, note that weight vector $\bw_k$ is improving the estimation and convergence rate of the introduced method, as the subagging estimators underperform in all of the  Models 4-6.

 \begin{figure}[htbp]
 \centering
 \includegraphics[width=7.5cm,height=4cm]{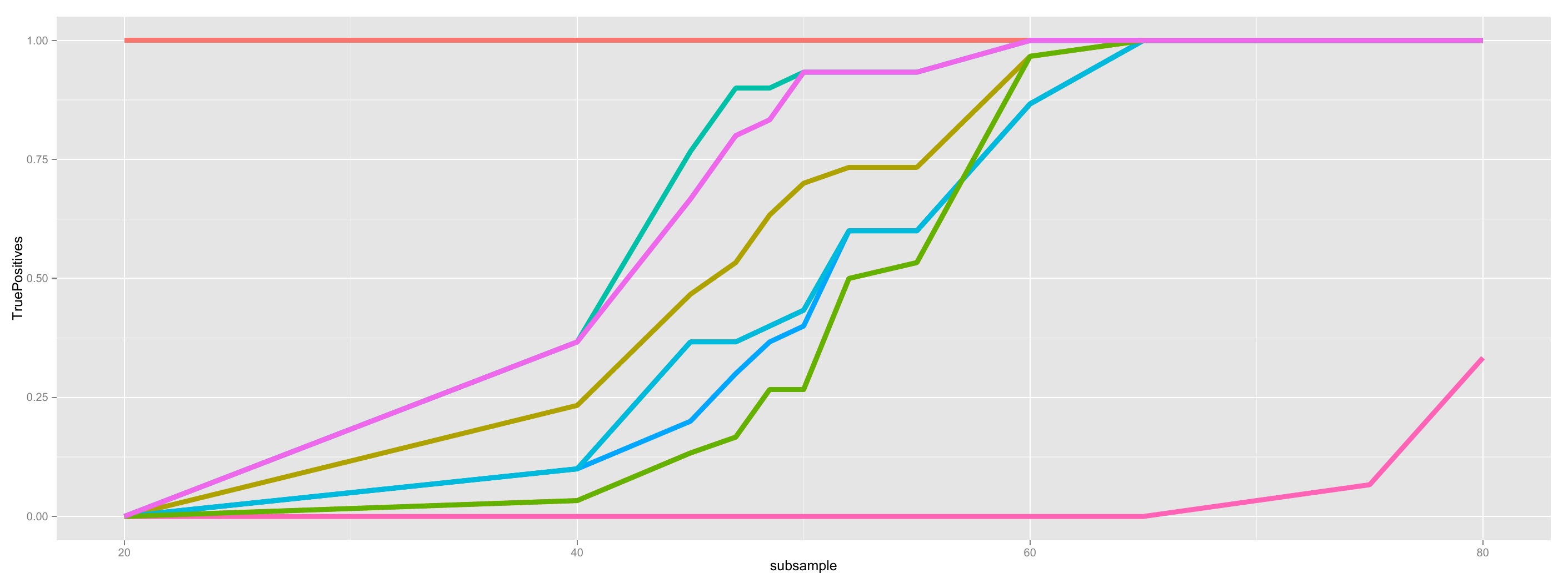}
  \includegraphics[width=7.5cm,height=4cm]{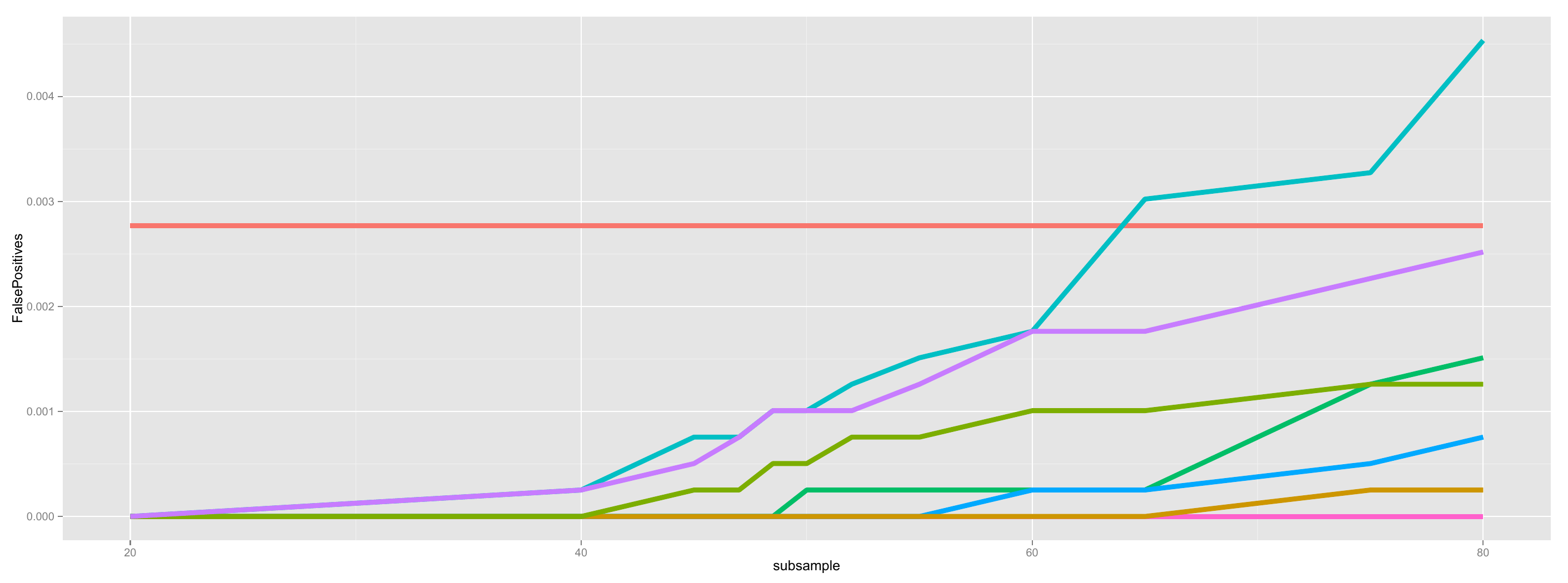}
 \includegraphics[width=7.5cm,height=4cm]{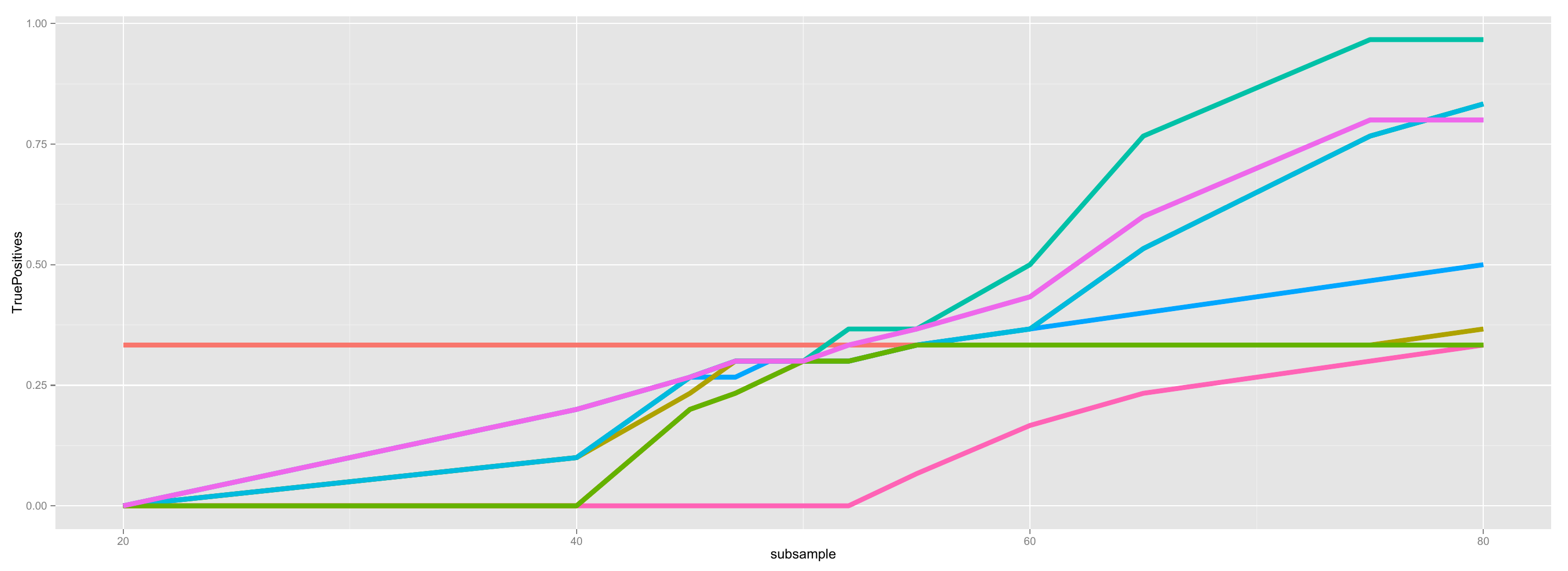}
 \includegraphics[width=7.5cm,height=4cm]{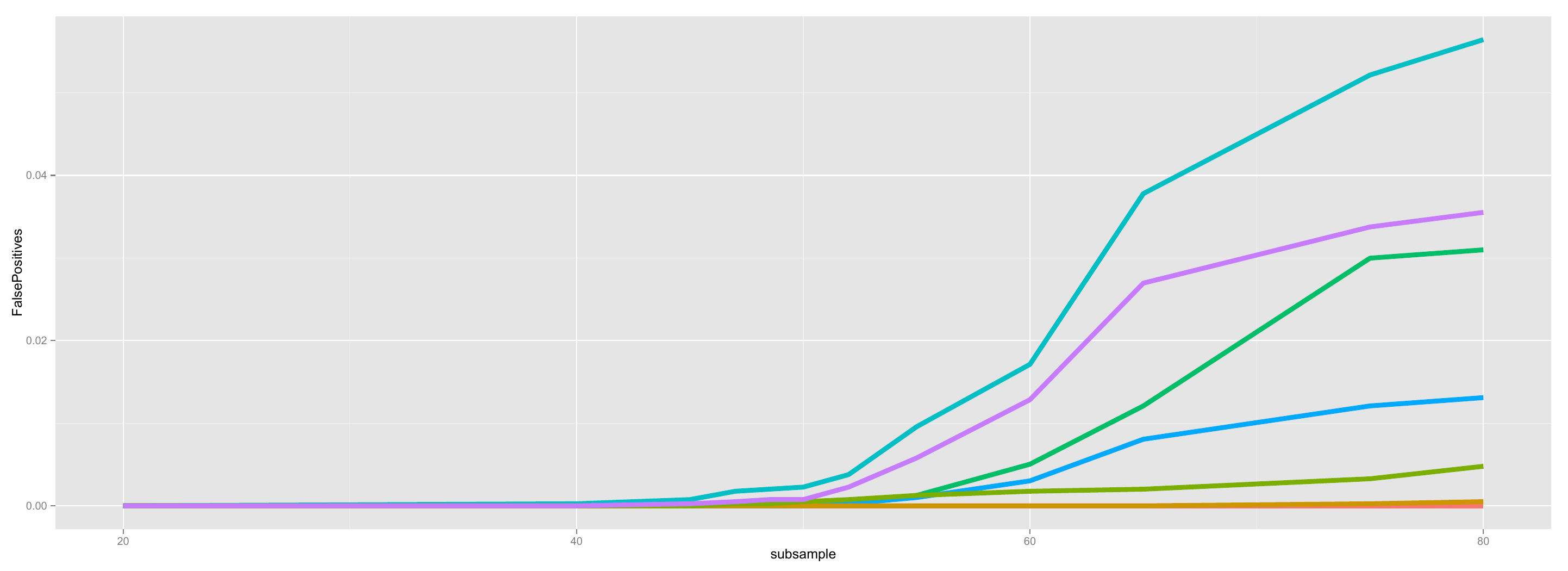}
 \includegraphics[width=7.5cm,height=4cm]{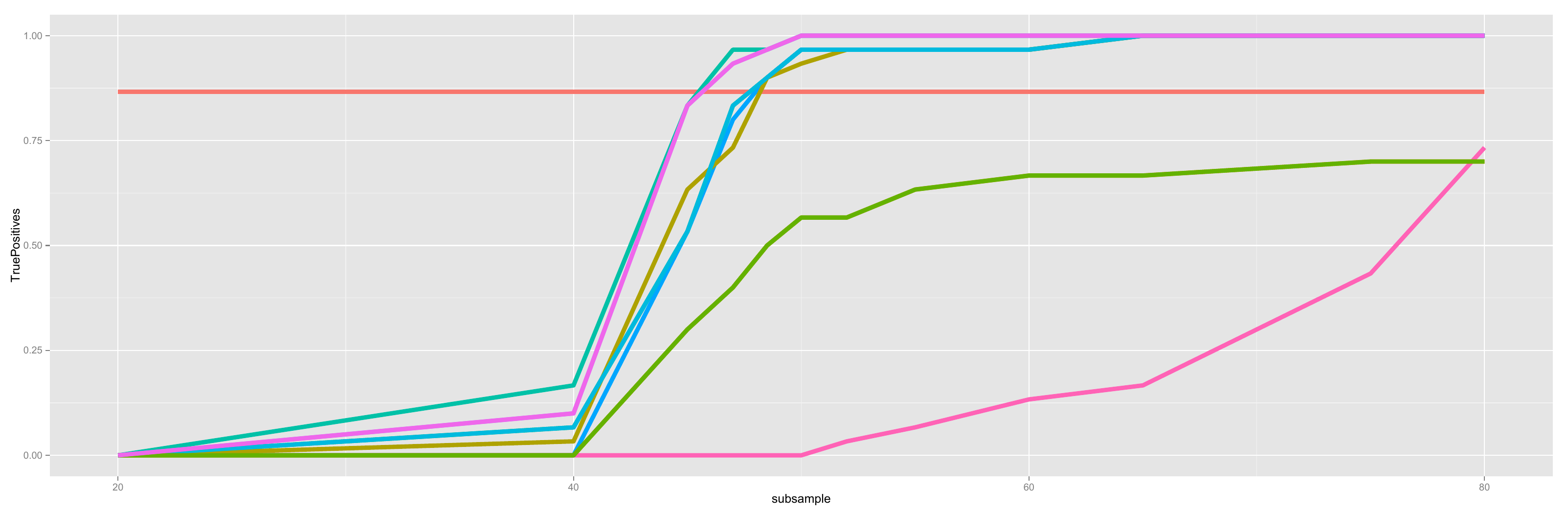}
  \includegraphics[width=7.5cm,height=4cm]{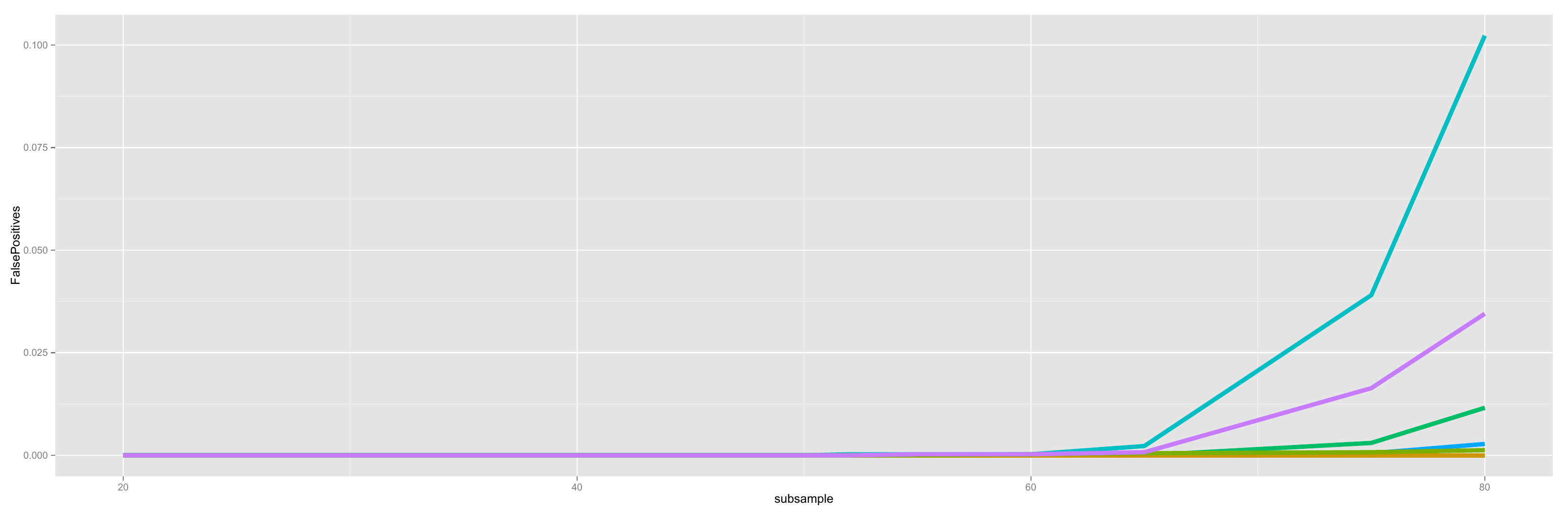}
  \caption{X-axes represents the  subsample size $N$ on a logarithmic scale and the Y-axes represents the average selection probability of the signal variables (the first column) and of the noise variables (the second column).  The first row is according to Model 4, the second according to Model 5 and the third according to Model 6.  Colors represent: red-Lasso, pink-subagging and minimax\eqref{eq:def} with brown-- $K=1,\tau=1$;
dark-brown--$K=1,\tau=1/2$;
dark-green--$K=1,\tau=4/5$;
  green--$K=10,\tau=1$;
green-blue--$K=10,\tau=1/2$;
 light-blue--$K=10,\tau=4/5$;
 blue--$K=3,\tau=1$ ;
purple--$K=3,\tau=1/2$; 
dark-pink--$K=3,\tau=4/5$. }\label{fig:fig1e}
  \end{figure}

\subsection{Mean Squared Error}

Under the same set of Models 1-6 , we investigate the convergence   of the proposed method  with respect to its mean squared error, as the subsample size  $N$ gets larger and larger. In this case, we keep  the sample size $n$ to be $10000$.
Results are summarized in the Figure  \ref{fig:fig1d}.
The maximum-contrast estimator   exhibits faster convergence rate in comparison to the traditional subagging by a large margin across all Models 1-6. Moreover, we see that different choices of $\tau$ do  not  alter   the performance  of the estimator by much.

 \begin{figure}[htbp]
 \centering
 \includegraphics[width=7.5cm,height=4cm]{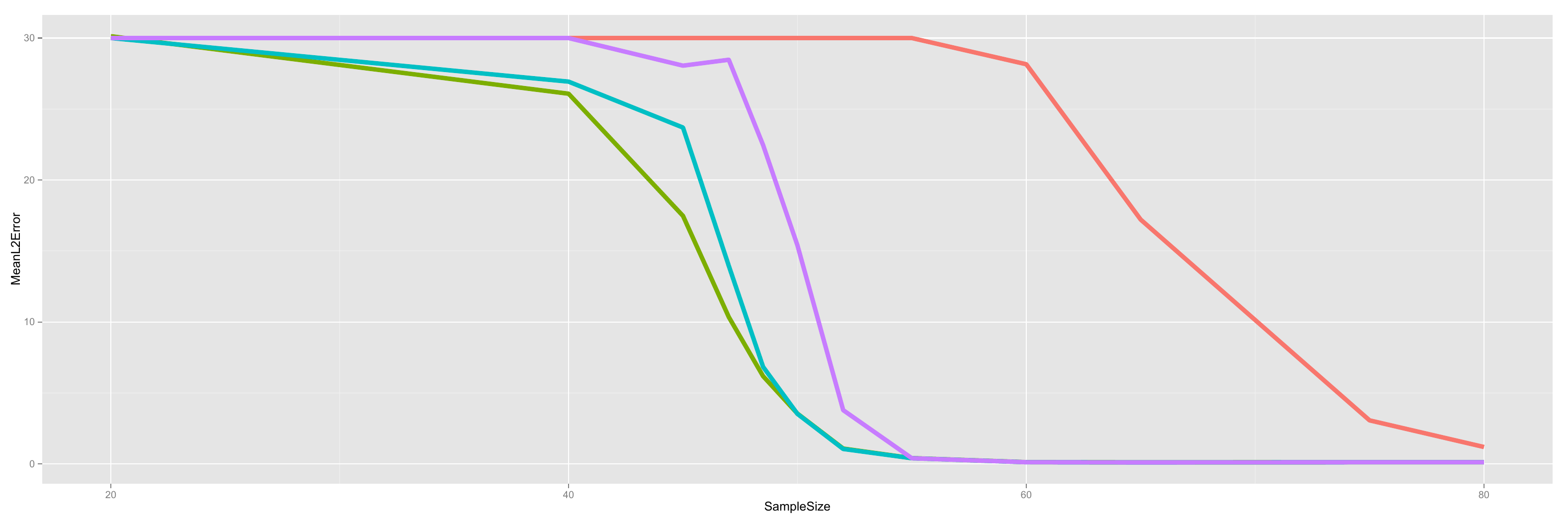}
  \includegraphics[width=7.5cm,height=4cm]{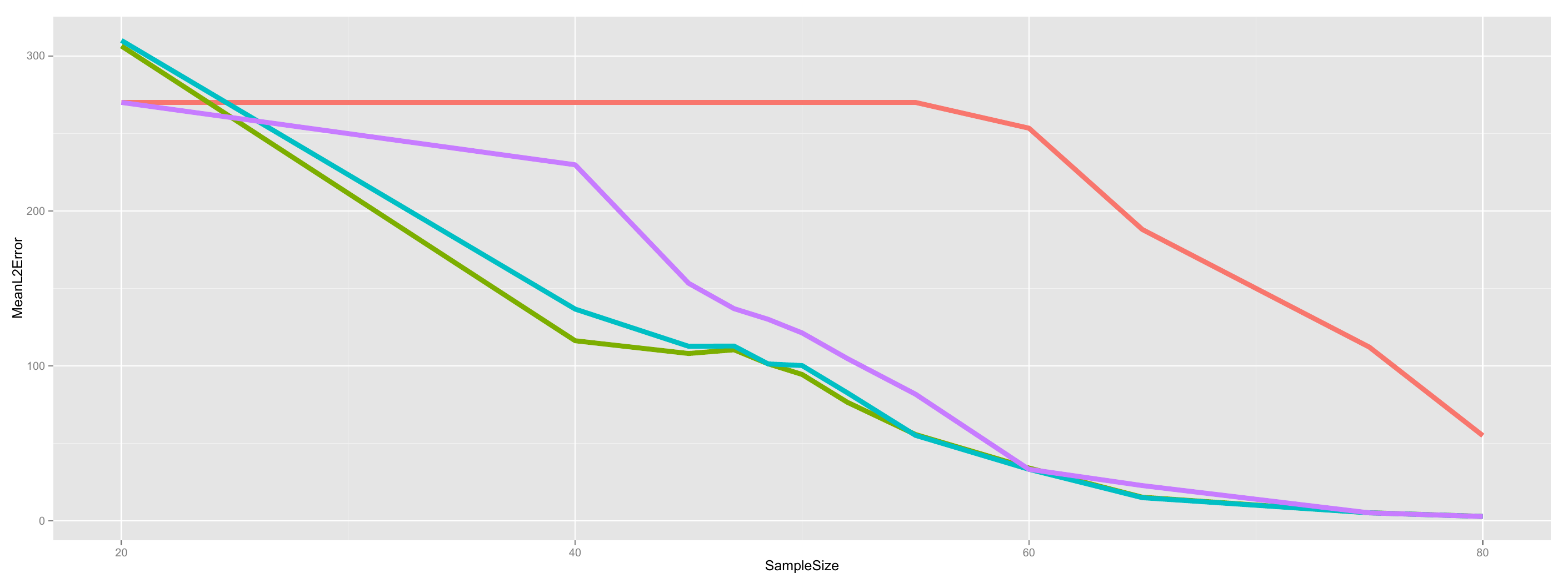}
 \includegraphics[width=7.5cm,height=4cm]{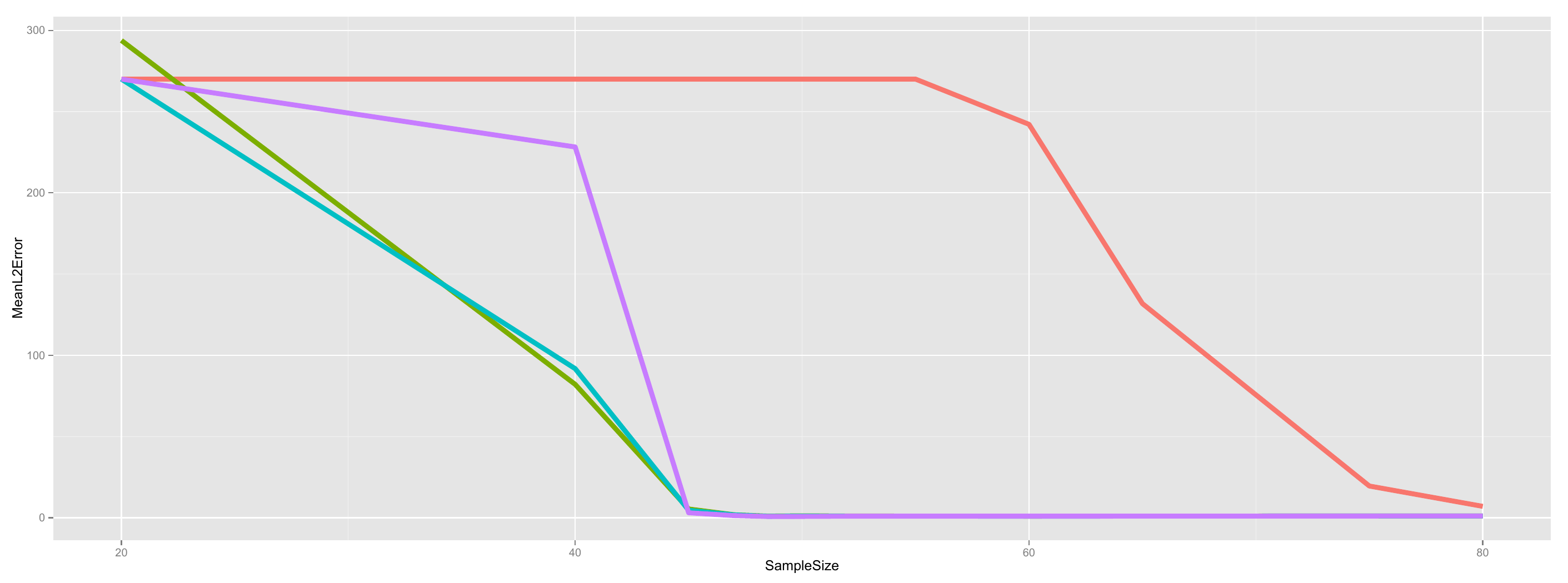}
 \includegraphics[width=7.5cm,height=4cm]{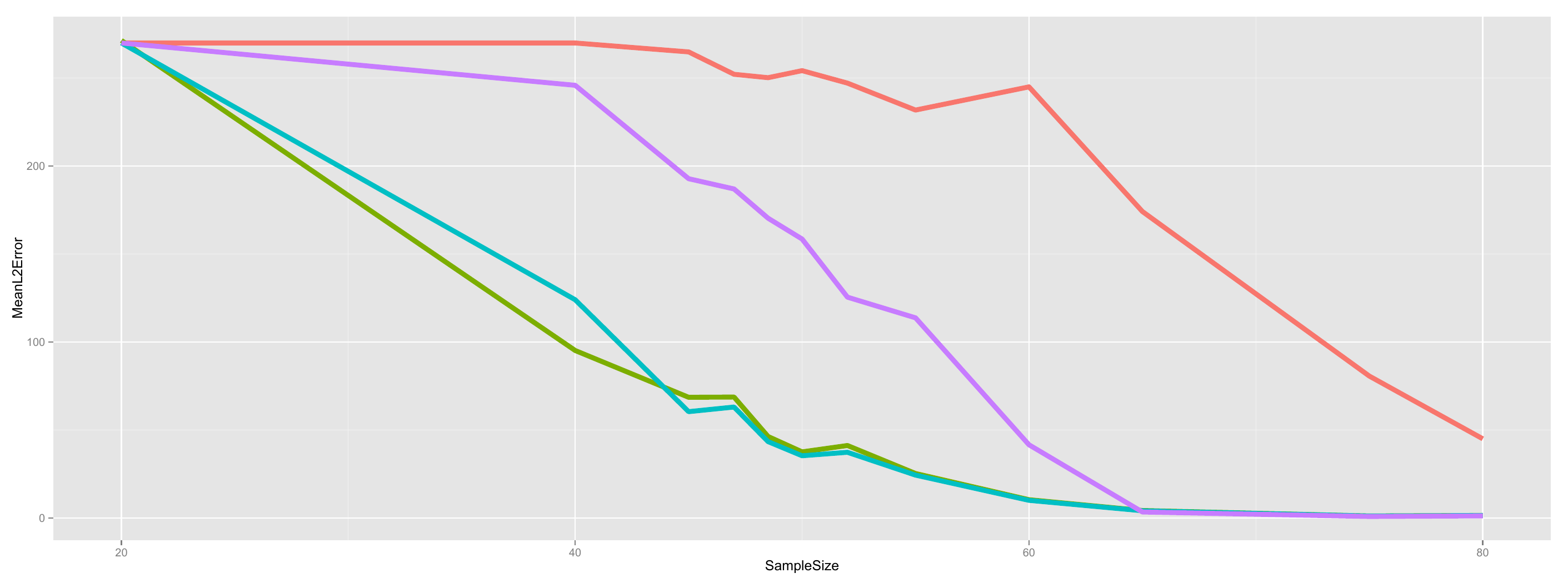}
 \includegraphics[width=7.5cm,height=4cm]{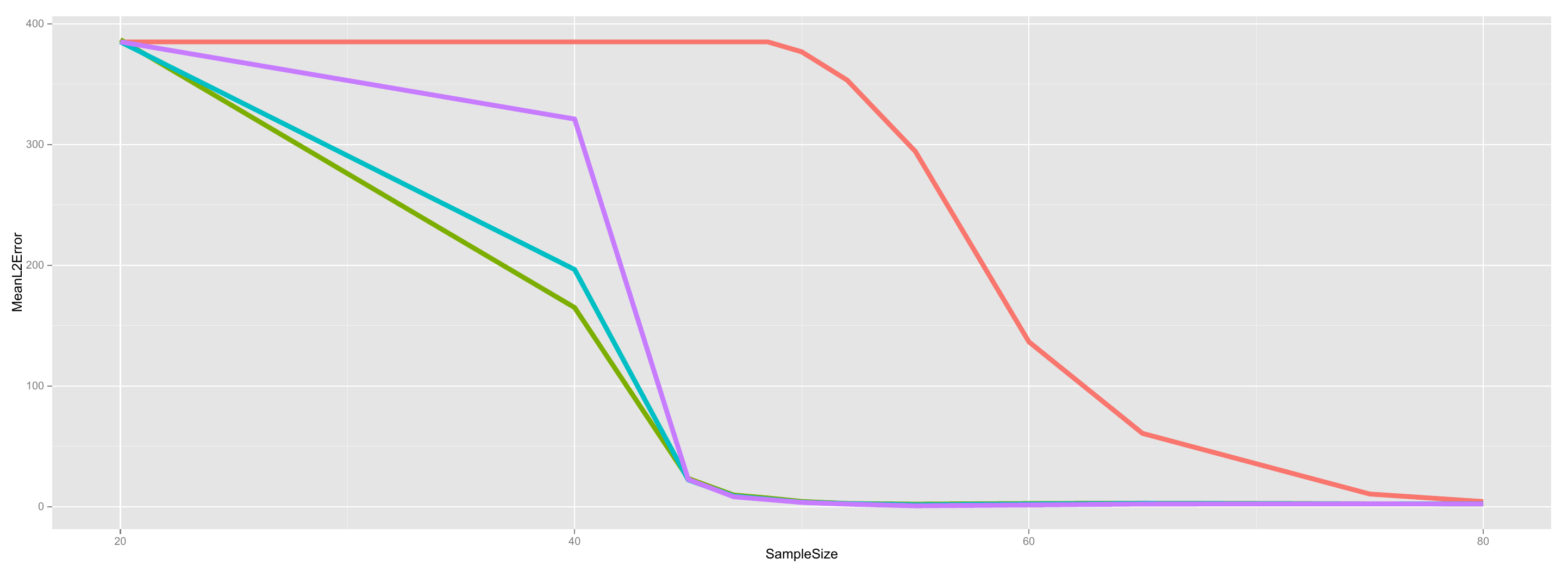}
  \includegraphics[width=7.5cm,height=4cm]{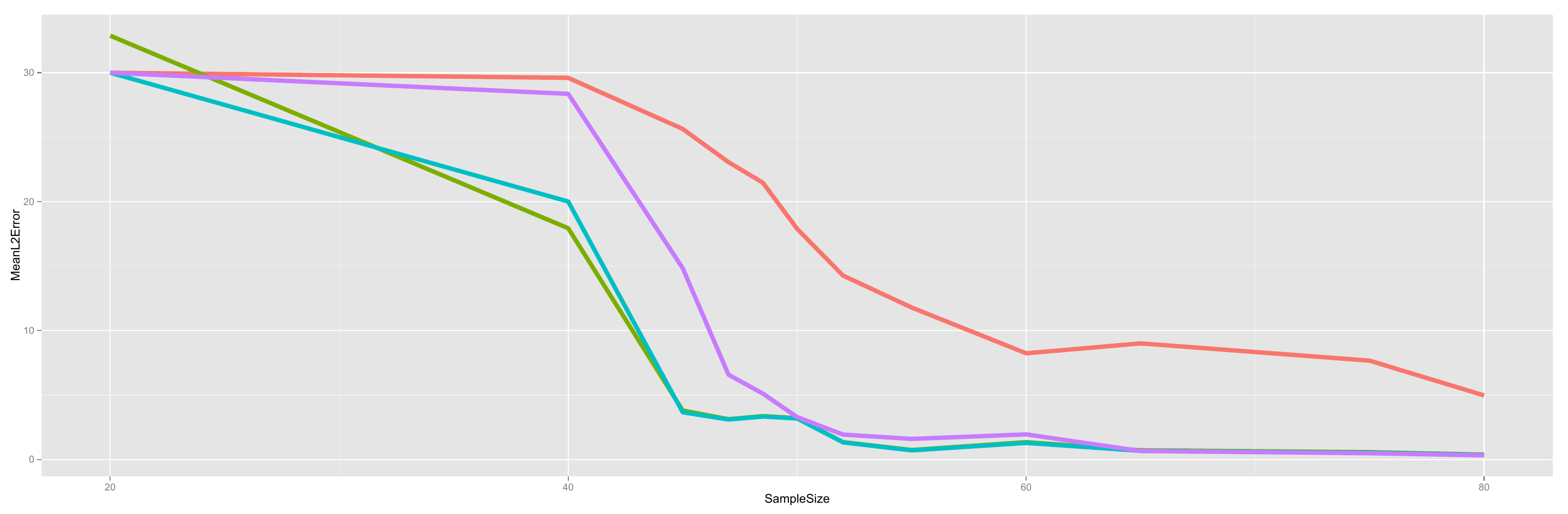}
  \caption{X-axes represents the  sample size   and the Y-axes represents the average $l_2$ error.    Simulation settings: the first row is according to Model 1 and 2, the second according to Model 3 and 4,  and the third according to Model 5 and 6 . Colors represent:   red-subagging and \eqref{eq:def} with   dark green-$K=3,\tau=0.3$,  blue-$K=3,\tau=0.5$ and purple-$K=3,\tau=0.8$}\label{fig:fig1d}
  \end{figure}

\subsection{Sensitivity with respect to the number of blocks $b$ } \label{sec:b}
We  also  test  the sensitivity of the proposed method with respect to  the choice of the number of blocks $b$. For that purpose, we generate synthetic data from the simple linear model.
 Model specifications are equivalent to the ones of Model 1. As a measure of performance, we contrast  mean selection frequency of  the first 30 important variables with a different number of  block i.e. boostrap replications. 
Figure \ref{fig:fig2} summarizes our findings and  reports average selection frequency and its 95\% confidence interval for 4 different bootstrap replications: $b= 1, 2, 5$ and $10$.  As expected, the  larger the $b$, the smaller is the variability in estimating. Interestingly, $b=2$ was sufficient to guarantee perfect recovery for all $\tau> 1/2$. Even for $b=1$ this seems to be true in the example considered, but the variability is significantly larger then for $b\geq2$, hence making general conclusion seem inappropriate.

\begin{figure} 
\centering
\includegraphics[width=13.5cm, height=5.5cm]{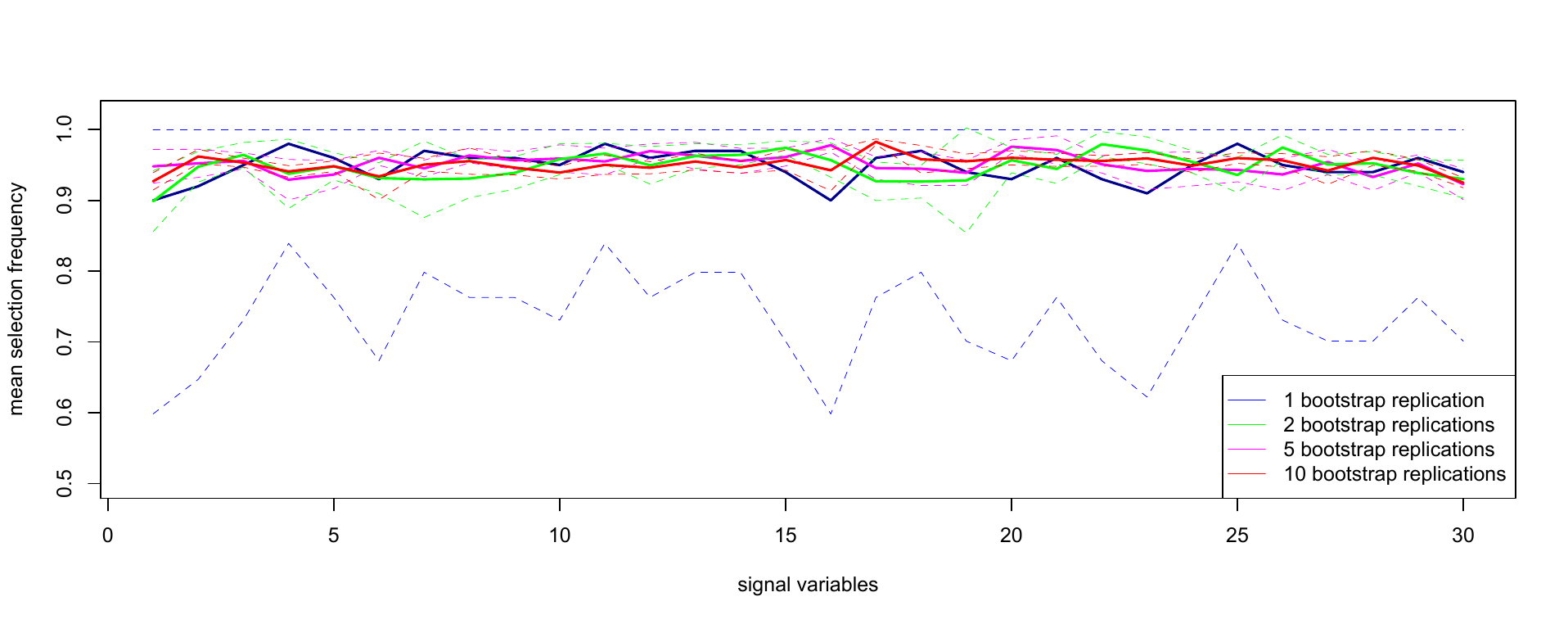}
\caption{ Variability of selection probabilities $\pi_j^*$ among signal variables with a change in the number of data perturbations ($B=1$, $B=2$,$B=5$ blue, green and pink respectively). X-axes enumerates the first 30 signal variables, whereas the Y-axes show the median selection probability. Dashed lines represent corresponding 95\% confidence intervals of it.}\label{fig:fig2}
\end{figure}

\newpage
\section{Discussion}
In this paper, we presented results demonstrating that our decomposition-based method for approximate variable selection achieves minimax optimal convergence rates, whenever the number of data partitions is not too large. We allow the number of partitions  to grow polynomially with the subsample size. The error guarantees of the maximum-contrast estimator  depend on the effective number of blocks of sample splits and the effective dimensionality of the support set (recall bound \eqref{eq:3.7} of Theorem \ref{thm:final}). For any number of blocks of sample splits $b \leq  \exp\{ (1-1/m)n/s + \log n/s\}/2$  and $K \leq \max\{N\rho_n^2\sqrt{b} /s, Np/s\} $,   our method achieves approximate support recovery i.e. 
$$P(S \subseteq \hat S_\tau) \geq 1-p^{1-c_N}, \qquad E |S^c \cap \hat S_\tau| \sim \frac{1}{\sqrt{b}} \frac{s}{p},  $$
for all $\lambda_N \sqrt{n/\log p}\in (1/c_N,c_N)$ and $c_N \geq 1$.  Theorem \ref{thm:optimal} confirms these to be minimax optimal for approximate recovery. In addition, we achieve substantial computational benefits coming from the subsampling schemes, in that computational costs scale linearly with $N$ rather than $n$.

The maximal-contrast estimator also has deep connections with the literature of stability selection.   Stability selection \citep{MB11} and paired stability selection
 \citep{SS12}, and more recent median selection \citep{D14} can be equivalently formulated as voting estimators.  
First, we clarify
that even when $K=1,m=1,b=d$, maximum-contrast is not the same procedure as Stability selection, although they share the
same population objective. The difference is that maximum-contrast utilizes a minimax estimator  of the  population objective.  Second, maximum-contrast  is
designed for   the settings with a growing number of   subsamples of a very small size; none of  the aforementioned methods can be directly implemented. These methods need  a fix number of large subsamples. 
Third, our theoretical analysis differs   from that in the existing work;   we do not require irrepresentable condition and we show optimality   of the proposed method.

     \section{Preliminary Lemmas}
     
     Let us introduce notation used throughout the proofs. We use $\langle \cdot, \cdot\rangle_n$ to denote the empirical inner product, i.e. $\langle\bu,\bv\rangle_n = \frac{1}{n}  \bu ^T \bv $. Whenever possible, we will suppress $\lambda_n$ and $\lambda_N$ in the notation of $\hat \bbeta(\lambda_n)$ and $\hat \bbeta_{i:k}(\lambda_N)$, and use $\hat\bbeta$ and $\hat\bbeta_{i:k}$.
     Let $P$ and $\mathbb P^*$ stand for the probability measures generated  by the error  vector $(\varepsilon_1,\cdots,\varepsilon_n)$ and generated jointly by the weights  $w_1,\cdots,w_N$ and  the errors $\varepsilon_1,\cdots,\varepsilon_n$.

 In order to study  statistical properties of the proposed  estimator, it is useful to present the optimality
conditions for   solutions of the problems \eqref{eq:lasso} and \eqref{eq:smallLasso}.
 
 $\hat \bbeta  $ is a solution to \eqref{eq:lasso}, if  and only if 
 \begin{align} 
\left\langle \bX_j,   \bY - \bX  \hat\bbeta\right\rangle_n &=  \lambda_n \mbox{sign}(\hat\beta_j), & \mbox{if}  & \qquad \hat \beta_j \neq 0 \label{eq:kktlassoa}\\ \label{eq:kktlassob}
 \left| \left\langle  \bX_j ,  \bY - \bX  \hat\bbeta\right\rangle_n  \right| &\leq  \lambda_n  , &  \mbox{if}  & \qquad \hat \beta_j = 0.
  \end{align}

 $\hat \bbeta_{i:k}  $ is a solution to \eqref{eq:smallLasso}, if  and only if 
  \begin{align} \label{eq:kktsublassoa}
\left\langle\bD_{\sqrt {\mathbf w_k}} \bX_{I_i,j},   \bD_{\sqrt {\mathbf w_k}}   \mathbf Y_{I_i} - \bD_{\sqrt {\mathbf w_k}}   \mathbf X_{ I_{i} } \hat \bbeta_{i:k}  \right\rangle_n&=& \lambda_N \mbox{sign}(\hat \beta_{i:k,j} ), &\quad \mbox{if}  &\hat \beta_{i:k,j} \neq 0 \\
\left| 
\left\langle
 \bD_{\sqrt {\mathbf w_k}} \bX_{I_i,j}, \bD_{\sqrt {\mathbf w_k}} \mathbf Y_{I_i} - \bD_{\sqrt {\mathbf w_k} }\mathbf X_{ I_{i} } \hat \bbeta_{i:k}  \right\rangle_n
 \right |
 &\leq& \lambda_N, \qquad \quad &\quad \mbox{if}  &\hat \beta_{i:k,j}  = 0.
  \label{eq:kktsublassob}
  \end{align}
In the display above the set of indices $I_i$ corresponds to those   which were used for the computation of $\hat \bbeta_{i:k} $.

Below we define the new primal dual witness technique to examine when the  solution  to one optimization problem is also a solution to the other. 
     
     \begin{lemma}\label{lem:kkt}
  Suppose $\hat\bbeta_{i:k}$ is a solution to the subLasso problem, i.e. it satisfies \eqref{eq:kktsublassoa} and \eqref{eq:kktsublassob}. Then, if 
\[
\left| \left\langle \bX_j, \bY_{I_i} - \bX_{I_i} \hat\bbeta_{i:k}\right\rangle_n\right| \leq \lambda_n - \lambda_N, \qquad \mbox{for all } j: \hat\beta_{i:k,j}=0,
\]
then $\hat\beta_{i:k,j}$ satisfies \eqref{eq:kktlassob}, that is $\hat S_i^c \subseteq \hat S ^c$.
%
\end{lemma}

 \begin{lemma}\label{lem:2}
 Suppose $\hat\bbeta$ is a solution to the  Lasso problem, i.e. it satisfies \eqref{eq:kktlassoa} and \eqref{eq:kktlassob}. Then, if 
\[
\left| \left\langle \bX_j, \bY_{I_i} - \bX_{I_i} \hat\bbeta \right\rangle_n\right| \leq  \lambda_N/n  - \lambda_n, \qquad \mbox{for all } j: \hat\beta_{j}=0,
\]
then $\hat\beta_{j}$ satisfies \eqref{eq:kktsublassob}, that is $\hat S^c \subseteq \hat S_i ^c$.
 \end{lemma}

For the  purpose of examining condition number of various design matrices, we first establish a bound on the spectral norm of the difference between  the inverses of two positive semidefinite matrices.  To establish this result we use Theorem III. 2.8 of \cite{B97}.

\begin{lemma}\label{lem:matrixineq}
Let $\bD,\bG \in \mathbb{R}^{n\times n}$ be  two semi-positive definite matrices. Let $\| \cdot \|$ be a matrix  norm induced by the vector $l_{\infty}$ norm. Then,
\begin{equation*}
\left\| \bD^{-1}-\bG^{-1} \right\|^2 \leq n  \left(\frac{1}{\lambda_{\min}(\bD)}  + \frac{1}{\lambda_{\min}(\bG)} \right).
\end{equation*}
\end{lemma}

Next we show that the solution of the Sub-Lasso problem \eqref{eq:smallLasso} has good predictive properties. 
Since proof follows the strategy of \cite{BRT09}, the proof is presented in the supplement for completeness.  

\begin{lemma}\label{thm:1}
Let  $\| \mathbf x^j_{I_i}\|_\infty \leq  c_2 \sqrt{ N / \log p}$, for some constant $c_2>1$ and all $1\leq j\leq p$.
Then,  on the event $\mathcal{A}_q(\lambda_N)= \bigcap_{j=1}^p \left\{2  \frac{1}{n}\sum_{l \in I_i} w_{k,l} |\varepsilon_i X_{lj}| \leq \lambda_N - q\right\}$, for $q < \lambda_N$,

 (i) There exists a positive number $ \mbox{e}_n$ such that 
$
\min\bigl\{ { {n}^{-1/2} \mathbb{E}_{\mathbf w} \{\| \tilde \bX \mathbf v\|_2 \}}/{\| \mathbf v_S\|_2}: |S|\leq s, \mathbf v \in \mathbb{R}^p,  \bigl.$
$ \bigl. \mathbf v \neq 0, \mathbf v \in \mathbb{C}(3,S) \bigl\} >  \mbox{e}_n
$.
Then, for all  $m$ and $b$  
$$
 \left\| \mathbf X_{I_i}  (\boldsymbol {\boldsymbol\beta}^* - \hat{\boldsymbol\beta} _{i:k}(\lambda_N)) \right\|_2^2  \leq \frac{ (4\lambda_N-q)^2  s n  }{ e_n^2 }  a_N^2  \mbox{ for } c_1  \sigma \sqrt{\frac{\log p}{n}} \leq \lambda_N \leq c_2 \sigma\sqrt{\frac{\log p}{N}},
$$
for some positive constant $a_N$ such that 
$P(\min_{1\leq l \leq N} w_l \geq \mathbb E _{\mathbf w}\|\sqrt{\mathbf w} \|_\infty a_N^{-1} ) \to 1$.

(ii) If Condition \ref{cond:re} holds, then  
$$
 \left\| \tilde \bX_{I_i}  (\boldsymbol {\boldsymbol\beta}^* - \hat{\boldsymbol\beta} _{i:k}(\lambda_N)) \right\|_2^2  \leq \frac{ (4\lambda_N-q)^2  s n  }{ \zeta_N^2 }      , \mbox{ for } c_1  \sigma \sqrt{\frac{\log p}{n}} \leq \lambda_N.
$$
 \end{lemma}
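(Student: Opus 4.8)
The plan is to run the classical Bickel--Ritov--Tsybakov argument for the weighted Lasso \eqref{eq:smallLasso}, paying attention to two features specific to our setting: the empirical-process (noise) term lives on the $n$-scale through the weights $\mathbf w_k$ rather than on the $N$-scale, which is what makes the small choice $\lambda_N\gtrsim\sigma\sqrt{\log p/n}$ admissible; and the conclusion is phrased in the \emph{unweighted} prediction norm $\|\mathbf X_{I_i}(\boldsymbol\beta^*-\hat{\boldsymbol\beta}_{i:k}(\lambda_N))\|_2$, whereas the objective naturally controls the \emph{weighted} one $\|\mathbf w_k\circ\mathbf X_{I_i}(\cdot)\|_2$, so a transfer between the two norms will be needed.

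First I would set $\hat\Delta:=\hat{\boldsymbol\beta}_{i:k}(\lambda_N)-\boldsymbol\beta^*$ and use the optimality of $\hat{\boldsymbol\beta}_{i:k}(\lambda_N)$ in \eqref{eq:smallLasso} (compare its objective value with that at $\boldsymbol\beta^*$). Substituting $\mathbf Y_{I_i}=\mathbf X_{I_i}\boldsymbol\beta^*+\boldsymbol\varepsilon_{I_i}$ and rearranging yields the basic inequality
\[
\frac{1}{2n}\|\mathbf w_k\circ\mathbf X_{I_i}\hat\Delta\|_2^2 \;\le\; \frac{1}{n}\Big|\textstyle\sum_{l\in I_i}w_{k,l}\,\varepsilon_l\,\mathbf X_l^\top\hat\Delta\Big| \;+\; \lambda_N\big(\|\boldsymbol\beta^*\|_1-\|\hat{\boldsymbol\beta}_{i:k}(\lambda_N)\|_1\big).
\]
The noise term is bounded, column by column, by $\|\hat\Delta\|_1\cdot\max_{1\le j\le p}\frac1n|\sum_{l\in I_i}w_{k,l}\varepsilon_l X_{lj}|$; on the event $\mathcal{A}_q(\lambda_N)$ this is at most $\tfrac12(\lambda_N-q)\|\hat\Delta\|_1$. (Here the bound $\|\mathbf x^j_{I_i}\|_\infty\le c_2\sqrt{N/\log p}$ together with Condition~\ref{cond:w} is what, via Lemma~\ref{lem:a}, makes $\mathcal{A}_q(\lambda_N)$ a high-probability event at the scale $\lambda_N\asymp\sigma\sqrt{\log p/n}$, but for the lemma itself we simply work on this event.) Splitting $\|\hat\Delta\|_1=\|\hat\Delta_S\|_1+\|\hat\Delta_{S^c}\|_1$ and using $\|\boldsymbol\beta^*\|_1-\|\hat{\boldsymbol\beta}_{i:k}(\lambda_N)\|_1\le\|\hat\Delta_S\|_1-\|\hat\Delta_{S^c}\|_1$ (since $\boldsymbol\beta^*_{S^c}=0$), I get, after dropping the nonnegative left-hand side, the cone inequality $\|\hat\Delta_{S^c}\|_1\le\frac{3\lambda_N-q}{\lambda_N+q}\|\hat\Delta_S\|_1\le 3\|\hat\Delta_S\|_1$, i.e. $\hat\Delta\in\mathbb{C}(3,S)$; and, keeping the left-hand side, $\frac1{2n}\|\mathbf w_k\circ\mathbf X_{I_i}\hat\Delta\|_2^2\le\frac{3\lambda_N-q}{2}\|\hat\Delta_S\|_1\le\frac{4\lambda_N-q}{2}\sqrt{s}\,\|\hat\Delta_S\|_2$.

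Now I would feed in the restricted-eigenvalue hypothesis. For part (ii), Condition~\ref{cond:re} applies to $\hat\Delta\in\mathbb{C}(3,S)$ and lower-bounds $\|\mathbf w_k\circ\mathbf X_{I_i}\hat\Delta\|_2$ in terms of $\|\hat\Delta_S\|_2$ and $\zeta_N$, $N$; substituting into the last display, cancelling one power of $\|\mathbf w_k\circ\mathbf X_{I_i}\hat\Delta\|_2$ and rearranging bounds the weighted prediction norm, and passing to the unweighted norm $\|\mathbf X_{I_i}\hat\Delta\|_2$ using the normalization $\sum_{l}w_{k,l}=n$ of Condition~\ref{cond:w} produces the stated $16\lambda_N^2 sN\zeta_n^{-2}$. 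For part (i), only the weight-averaged restricted-eigenvalue quantity $\mbox{e}_n$ is assumed, not the realization-wise Condition~\ref{cond:re}; I would transfer it to the drawn vector $\mathbf w_k$ via $\|\mathbf w_k\circ\mathbf X_{I_i}\hat\Delta\|_2\ge(\min_{1\le l\le N}w_{k,l})\,\|\mathbf X_{I_i}\hat\Delta\|_2$ and the high-probability lower bound $\min_{1\le l\le N}w_{k,l}\ge\mathbb{E}_{\mathbf w}\|\sqrt{\mathbf w}\|_\infty/a_N$, which is exactly why the extra factor $a_N^2$ enters the bound; the upper restriction $\lambda_N\le c_2\sigma\sqrt{\log p/N}$ is what keeps $\mathcal{A}_q$ and the relaxed condition simultaneously in force.

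The step I expect to be the main obstacle is the bookkeeping between the three norms — the weighted prediction norm $\|\mathbf w_k\circ\mathbf X_{I_i}(\cdot)\|_2$ that the objective controls, the unweighted $\|\mathbf X_{I_i}(\cdot)\|_2$ in the conclusion, and the weight-averaged quantity $\mbox{e}_n$ — carried out uniformly over a range of $\lambda_N$ whose left endpoint sits a factor $\sqrt{n/N}$ below the usual Lasso threshold $\sqrt{\log p/N}$. The normalization $\sum_l w_{k,l}=n$ and the concentration of $\min_l w_{k,l}$ (hence the definition of $a_N$) are precisely what make this transfer lossless up to constants, and checking that Lemma~\ref{lem:a} indeed certifies $\mathcal{A}_q$ at the scale $\sqrt{\log p/n}$ under Condition~\ref{cond:w} is the other technical point to be careful about.
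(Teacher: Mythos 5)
Your proposal is correct and follows essentially the same route as the paper's own proof: the weighted basic inequality, bounding the noise term coordinatewise on $\mathcal{A}_q(\lambda_N)$ to get the cone membership $\hat\Delta\in\mathbb{C}(3,S)$ and the bound $(4\lambda_N-q)\sqrt{s}\,\|\hat\Delta_S\|_2$, then closing with Condition~\ref{cond:re} for part (ii) and with the relaxed condition $\mbox{e}_n$ plus the min-weight concentration event defining $a_N$ for part (i). The only cosmetic difference is that the paper carries out the weighted-to-unweighted norm transfer via Jensen's inequality applied to the row-replicated matrix $\tilde{\mathbf X}$ and the factor $\mathbb{E}_{\mathbf w}\|\sqrt{\mathbf w}\|_\infty/\min_l w_{k,l}$, which is exactly the bookkeeping you describe.
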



\begin{lemma}\label{lemma:3}
If   Condition \ref{cond:re} holds almost surely for $\tilde{\mathbf X}$,  and $\lambda_N \geq c_1 \sigma \sqrt{\log p/n}$, then   $|{\hat{S}}_i(\lambda_N, k)|\leq  \hat s:=C s \lambda_{\max}(\tilde{\mathbf X}^T \tilde{\mathbf X})/ (n \zeta_N^2)$ with probability $1-p^{1-c}$, $c>1$. 
\end{lemma}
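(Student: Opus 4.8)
The plan is to run the standard ``self-bounding'' argument for the cardinality of the active set of a Lasso-type estimator, adapted to the weighted sub-Lasso \eqref{eq:smallLasso} and carried out on the row-repeated design $\tilde{\mathbf X}$. Write $\hat A=\hat S_i(\lambda_N,k)$ for the active set and let $\tilde{\boldsymbol\varepsilon}$ be the correspondingly weighted/repeated noise vector. First I would write down the Karush--Kuhn--Tucker stationarity conditions of \eqref{eq:smallLasso}: using that the weighted quadratic loss $\|\mathbf w_k\circ\mathbf X_{I_i}\boldsymbol\beta\|^2$ is, up to the bookkeeping of the weights, of the form $\|\tilde{\mathbf X}\boldsymbol\beta\|^2$, the condition on the coordinates in $\hat A$ reads
\[
\frac1n\,\tilde{\mathbf X}_{\hat A}^{T}\bigl(\tilde{\mathbf Y}-\tilde{\mathbf X}\hat{\boldsymbol\beta}_{i:k}(\lambda_N)\bigr)=\lambda_N\,\mathrm{sign}\bigl(\hat{\boldsymbol\beta}_{i:k,\hat A}(\lambda_N)\bigr),
\]
so that taking Euclidean norms gives $n\lambda_N\sqrt{|\hat A|}=\bigl\|\tilde{\mathbf X}_{\hat A}^{T}(\tilde{\mathbf Y}-\tilde{\mathbf X}\hat{\boldsymbol\beta}_{i:k}(\lambda_N))\bigr\|_2$. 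The whole proof reduces to upper bounding the right-hand side.

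Next I would split the residual as $\tilde{\mathbf Y}-\tilde{\mathbf X}\hat{\boldsymbol\beta}_{i:k}(\lambda_N)=\tilde{\mathbf X}(\boldsymbol\beta^*-\hat{\boldsymbol\beta}_{i:k}(\lambda_N))+\tilde{\boldsymbol\varepsilon}$ and bound the two pieces separately. For the approximation part, $\|\tilde{\mathbf X}_{\hat A}^{T}\tilde{\mathbf X}(\boldsymbol\beta^*-\hat{\boldsymbol\beta}_{i:k})\|_2\le\sqrt{\lambda_{\max}(\tilde{\mathbf X}^{T}\tilde{\mathbf X})}\;\|\tilde{\mathbf X}(\boldsymbol\beta^*-\hat{\boldsymbol\beta}_{i:k})\|_2$, and after pulling $\sqrt{\|\mathbf w_k\|_\infty}$ out of the second factor this is the prediction error controlled by Lemma \ref{thm:1} (which is exactly where Condition \ref{cond:re} on $\tilde{\mathbf X}$ and the cone membership $\hat{\boldsymbol\beta}_{i:k}-\boldsymbol\beta^*\in\mathbb{C}(3,S)$ enter), giving a bound of order $\lambda_N\sqrt{sN}\,\zeta_N^{-1}$ up to weight-dependent factors. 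For the noise part, $\|\tilde{\mathbf X}_{\hat A}^{T}\tilde{\boldsymbol\varepsilon}\|_2\le\sqrt{|\hat A|}\,\|\tilde{\mathbf X}^{T}\tilde{\boldsymbol\varepsilon}\|_\infty$; a union bound over $j\in\{1,\dots,p\}$ applied to Lemma \ref{lem:a}, together with Condition \ref{cond:w} to absorb the $N\log\mathbf w_{2,\min}$ term, shows that for $\lambda_N\ge c_1\sigma\sqrt{\log p/n}$ one has $\frac1n\|\tilde{\mathbf X}^{T}\tilde{\boldsymbol\varepsilon}\|_\infty\le\tfrac12\lambda_N$ on an event of probability at least $1-p^{1-c}$ with $c>1$ (this is the event $\mathcal{A}_q(\lambda_N)$ used elsewhere in the paper).

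Finally I would combine: on that event
\[
n\lambda_N\sqrt{|\hat A|}\;\le\;C\,\lambda_N\sqrt{\lambda_{\max}(\tilde{\mathbf X}^{T}\tilde{\mathbf X})\,sN}\;\zeta_N^{-1}+\tfrac{n}{2}\,\lambda_N\sqrt{|\hat A|},
\]
and since $|\hat A|$ now appears on both sides I would move the last term to the left, divide by $\tfrac12 n\lambda_N$, square, and absorb the surplus $N$, $n$ and $\zeta_N$ factors into the constant, obtaining $|\hat A|\le C s\,\lambda_{\max}(\tilde{\mathbf X}^{T}\tilde{\mathbf X})/(n\zeta_N)$ with probability $1-p^{1-c}$. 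I expect the main obstacle to be the weight bookkeeping: making the KKT conditions, the $\tilde{\mathbf X}$ representation, the prediction bound of Lemma \ref{thm:1}, and the concentration of Lemma \ref{lem:a} mutually consistent with the $1/(2n)$ normalization in \eqref{eq:smallLasso} and with the exact powers of $w_{k,l}$ (including the $\sqrt{\|\mathbf w_k\|_\infty}$ passage from $\|\tilde{\mathbf X}\,\cdot\|_2$ to $\|\mathbf X_{I_i}\,\cdot\|_2$), and then the self-bounding step itself, which only closes because the hypothesis $\lambda_N\ge c_1\sigma\sqrt{\log p/n}$ makes the noise contribution small enough to be absorbed into the left-hand side. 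If the self-bounding feels circular, an alternative is a peeling argument over all index sets of size $\le\hat s$ combined with $\chi^2$ tail bounds for $\sup_{|A|\le\hat s}\|P_A\tilde{\boldsymbol\varepsilon}\|_2$, exactly as in Step II of the proof of Theorem \ref{thm:2}, but the route above is shorter.
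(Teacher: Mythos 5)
Your argument is correct and is essentially the route the paper intends: the paper omits the proof of this lemma with a one-line pointer to \cite{BRT09}, and the identical KKT-plus-prediction-error computation is carried out explicitly inside the proof of Theorem \ref{thm:final}, where $|\hat S_i(\lambda_N,k)|$ is bounded by $4\lambda_{\max}(\mathbf X_{I_i}^T\mathbf X_{I_i})\,\|\tilde{\mathbf X}_k(\boldsymbol\beta^*-\hat{\boldsymbol\beta}_{i:k}(\lambda_N))\|_2^2/(n^2\lambda_N^2)$ on the event $\mathcal A_q(\lambda_N)$ and then Lemma \ref{thm:1} is invoked, exactly as in your self-bounding step. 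The one caveat is your final remark about absorbing the surplus $N$, $n$ and $\zeta_N$ factors into the constant: the exponents you actually obtain (a $\zeta_N^{-2}$ and an extra factor of $N/n$) do not literally reduce to the lemma's stated $\zeta_N^{-1}$ and $n^{-1}$ by adjusting a constant, but the paper's own normalizations are internally inconsistent on precisely this point (compare the statement of Lemma \ref{thm:1}(ii) with its display \eqref{eq:norm2}), so the substance of your argument matches the paper's.
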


\section{Proofs of the Main Results}
   In this section, we provide detailed proofs of the main theoretical results of the paper. 
One difficulty  with    each of the sub-Lasso problems  is  that  there is  no automatic mechanism   to provide   the regularization parameter $\lambda_N$.   Note that under Conditions \ref{cond:ir} and \ref{cond:re}, approximating  the true support set   becomes  equivalent to  approximating the   support set  of the Lasso estimator \eqref{eq:lasso}.
We  explore this connection and
find   values of the tuning parameter  $\lambda_N$, which allow  the   sparsity pattern of  the sub-Lasso to
approximate the one of the Lasso estimator. 

\subsection{Proof of Theorem \ref{thm:21}}

With  $k$ fixed,   and a little abuse of notation,  we use   $\hat\bbeta_i$ to denote $\hat \bbeta_{i:k}$ throughout this proof alone.
Utilizing  Lemma \ref{lem:kkt},  it  suffices    to show    that the event 
$$\Omega_n= \left\{ \left| \left \langle  \bX_{I_i^c,j}, \bY_{I_i^c} - \bX_{I_i^c} \hat{\bbeta}_i \right\rangle_n  \right|      \leq \lambda_n-\lambda_N \right\}, \qquad \mbox{for all } j: \hat\beta_{i,j}=0, $$ has large probability. 
In the display above we used   notation
\[
\frac{1}{n}\sum_{l\notin I_i}  X_{lj}(Y_l-\bX_l\hat{\bbeta}_i):= \left\langle   \mathbf X_{ I_i^c,j},\mathbf Y_{  I_i^c} - \mathbf X_{ I_i^c} \hat{\bbeta}_i \right\rangle_n.
\]
 Let $\hat S _i$ denote the set of the non-zero coefficients of $\hat{\bbeta}_i$. Let us denote with $\hat A=\hat S _i  \cup S$. Let $P_{\hat A}$
 be a projection operator into the space spanned by all variables in the set $\hat A$, that is 
 \[
 P_{\hat A} =\mathbf X_{ I_i^c,\hat A} (\mathbf X_{ I_i^c,\hat A}^T \mathbf X_{ I_i^c,\hat A} )^{-1}\mathbf X_{ I_i^c,\hat A}^T. 
 \]
 Then, we can split the inner product $\left\langle   \mathbf X_{ I_i^c,j},\mathbf Y_{  I_i^c} - \mathbf X_{ I_i^c} \hat{\bbeta}_i \right\rangle_n$ into two terms
 \[
 \left\langle \mathbf X_{ I_i^c,j},  (\mathbb I -  P_{\hat A} )(\mathbf Y_{  I_i^c} - \mathbf X_{ I_i^c} \hat{\bbeta}_i)  \right\rangle_n
 +
  \left\langle \mathbf X_{ I_i^c,j},   P_{\hat A}  (\mathbf Y_{  I_i^c} - \mathbf X_{ I_i^c} \hat{\bbeta}_i)  \right\rangle_n.
 \]
 Controlling the size of the set $\Omega_n$ is equivalent to upper bounding    previous two expressions separately. The second one is more challenging and is presented first.

\subsubsection{
Step I: Controlling $\left\langle \mathbf X_{ I_i^c,j},   P_{\hat A}  (\mathbf Y_{  I_i^c} - \mathbf X_{ I_i^c} \hat{\bbeta}_i)  \right\rangle_n$}

The KKT 
 equations \eqref{eq:kktsublassoa} and \eqref{eq:kktsublassob} provide the upper bound 
  \[
  \left\langle \mathbf X_{ I_i^c,j},   P_{\hat A}  (\mathbf Y_{  I_i^c} - \mathbf X_{ I_i^c} \hat{\bbeta}_i)  \right\rangle_n \leq 
\frac{\lambda_N}{n} \mathbf X_{ I_i^c,j}^T \mathbf X_{ I_i^c,\hat A} (\mathbf X_{ I_i^c,\hat A}^T \mathbf X_{ I_i^c,\hat A} )^{-1} (\mbox{sign} \hat{\bbeta}_i ( \lambda_N)).
\]
By  Lemma \ref{lemma:3},  with high probability
\begin{eqnarray} \label{eq:step49}
|\hat A | \leq s + C s \lambda_{\max}(\tilde{\mathbf X}^T \tilde{\mathbf X})/ (n \zeta_N):=r,
\end{eqnarray}
for some nonnegative constant $C$. Hence, $\lambda_{\min}(\mathbf X_{\hat A}^T \mathbf X_{\hat A}) \geq 
\inf_{|A|=r}\lambda_{\min}(\mathbf X_{  A}^T \mathbf X_{  A})$, where the last term is strictly positive by the Condition \ref{cond:re} with a  constant $a=1$ and a vector $\bv=(1,\dots, 1, 0 , \dots ,0)$, $\mbox{supp}(\bv) =  A$. In turn, we see that the matrix $\mathbf X_{  A}^T \mathbf X_{  A}$ is invertible with high probability.

We split $\frac{\lambda_N}{n} \mathbf X_{ I_i^c,j}^T \mathbf X_{ I_i^c,\hat A} (\mathbf X_{ I_i^c,\hat A}^T \mathbf X_{ I_i^c,\hat A} )^{-1} (\mbox{sign} \hat{\bbeta}_i ( \lambda_N))$  into the sum   $T_1 + T_2$ with
\[
T_1 =\frac{\lambda_N  |\hat A| }{n}  (\mathbf X_{\hat A}^T \mathbf X_{\hat A} )^{-1}   \mathbf X_{ I_i^c,\hat A}^T \mathbf X_{ I_i^c,j} \in \mathbb{R}^{|\hat A|}
\]
and 
\[
T_2 = \frac{\lambda_N  |\hat A|}{n}     \left[ (\mathbf X_{ I_i^c,\hat A}^T \mathbf X_{ I_i^c,\hat A} )^{-1}   - (\mathbf X_{\hat A}^T \mathbf X_{\hat A} )^{-1} \right]  \mathbf X_{ I_i^c,\hat A}^T \mathbf X_{ I_i^c,j} \in \mathbb{R}^{|\hat A|}.
\]

By the H\"older's inequality, it suffices to bound $\| T_1\|_\infty$ and $\| T_2\|_\infty$.
We treat the two terms  independently.
First,  by the  triangular inequality
\begin{eqnarray} \label{eq:step50}
\| T_1\|_\infty
&\leq  & 
 \frac{\lambda_N|\hat A |}{n} \left\| (\mathbf X_{\hat A}^T \mathbf X_{\hat A} )^{-1}  \left[ \mathbf X_{ I_i^c,\hat A}^T \mathbf X_{ I_i^c,j}  -\mathbf X_{  \hat A}^T \mathbf X_{ j} \right]\right\|_{\infty} + 
  \frac{\lambda_N|\hat A |}{n}\left\| (\mathbf X_{\hat A}^T \mathbf X_{\hat A} )^{-1}   \mathbf X_{  \hat A}^T \mathbf X_{ j}  \right\|_{\infty} \\
  \nonumber &:=&T_{11} + T_{12}.
\end{eqnarray}
We proceed to bound $T_{11}$ and $T_{12}$ next.
Let us first discuss the term $T_{11}$.
By consistency of the vector norm $\| \|$ and  its corresponding operator norm $\| \|$, Proposition IV.2.4 of  \cite{B97}, guarantees that  
$$\| \bM \bx\| \leq \| \bM\| \| \bx\|,$$
 for a matrix $\bM$ and a vector $\bx$.  Therefore,   for $\bM = (\mathbf X_{\hat A}^T \mathbf X_{\hat A} )^{-1}$ and $\bx =\mathbf X_{ I_i^c,\hat A}^T \mathbf X_{ I_i^c,j}  -\mathbf X_{  \hat A}^T \mathbf X_{ j} $,  
\[
\| T_{11}\|_\infty \leq  \frac{\lambda_N r}{n} \left\| (\mathbf X_{\hat A}^T \mathbf X_{\hat A} )^{-1} \right\| 
 \left\|   \mathbf X_{ I_i^c,\hat A}^T \mathbf X_{ I_i^c,j}  -\mathbf X_{  \hat A}^T \mathbf X_{ j}  \right\|_{\infty} ,
\]
with the induced operator normed $\| \cdot \|$ defined as 
\[
\left\| (\mathbf X_{\hat A}^T \mathbf X_{\hat A} )^{-1} \right\|  = \sup_{\bv \neq 0} \frac{ \| (\mathbf X_{\hat A}^T \mathbf X_{\hat A} )^{-1} \bv \|_\infty }{ \| \bv\|_\infty}.
\]
Let $\mathcal{E}_n =\{ \hat A = \hat S_i \cup S, |\hat A| \leq r: r \geq 0\}$.

On the event $\mathcal{E}_n $, we have 
\[
\left\| (\mathbf X_{\hat A}^T \mathbf X_{\hat A} )^{-1} \right\| 
\leq 
\sup_{A: |A| \leq r} \sup_{\bv \neq 0} \frac{ \| (\mathbf X_{  A}^T \mathbf X_{  A} )^{-1} \bv \|_\infty }{ \| \bv\|_\infty}
=
\max_{\| \bv\|_\infty=1, \bv \neq 0} \sup_{A: |A| \leq r}   { \| (\mathbf X_{  A}^T \mathbf X_{  A} )^{-1} \bv \|_\infty }.
\]
 For a matrix $\bM \in \mathbb{R}^{r \times r}$,  and its operator and Frobenious norm, a simple inequality holds $\| \bM\| \leq \sqrt{r} \| \bM \|_F = \sqrt{r} \sqrt{\lambda_{\max} (\bM^T \bM)}$
\citep{B97}. Using such inequality, on the event $\mathcal{E}_n $ we have 
\[
\left\| (\mathbf X_{\hat A}^T \mathbf X_{\hat A} )^{-1} \right\|  \leq \sqrt{r}  \sup_{A: |A| \leq r}  \sqrt{\lambda_{\max} \left( (\mathbf X_{  A}^T \mathbf X_{ A} )^{-2}\right)}.
\]
Furthermore, as $\lambda_{\max} (\bM^{-1}) = \lambda_{\min}^{-1}(\bM)$ \citep{B97},  on $\mathcal{E}_n $
\[
\left\| (\mathbf X_{\hat A}^T \mathbf X_{\hat A} )^{-1} \right\|  \leq \sqrt{r}  /  \sup_{|A| \leq r}  \lambda_{\min}(\mathbf X_{  A}^T \mathbf X_{ A}) \leq \sqrt{r} / \zeta_N^2,
\]
where in the last step Condition \ref{cond:re} guarantees $ \lambda_{\min}(\mathbf X_{  A}^T \mathbf X_{ A}) \geq \zeta_N^2$  on the set $\mathcal{E}_n$. At the moment,   the bound on $\| T_{11}\|_\infty$, conditional on the event $\mathcal{E}_n$, is as follows 
\begin{equation}
\| T_{11}\|_\infty \leq  \frac{\lambda_N r^2 }{n \zeta_N^2}  
 \left\|   \mathbf X_{ I_i^c,\hat A}^T \mathbf X_{ I_i^c,j}  -\mathbf X_{  \hat A}^T \mathbf X_{ j}  \right\|_{\infty} .
\end{equation}
Now   we observe that simple  inequality provides
\begin{equation}\label{eq:step501}
\| \mathbf X_{  \hat A}^T \mathbf X_{ j}\|_\infty  =  \max_{q \in \hat A} \sum_{i =1}^n | X_{iq}X_{ij}|  \geq \max_{q \in \hat A} \sum_{i \in I_i} | X_{iq}X_{ij}|=\| \mathbf X_{ I_i^c,\hat A}^T \mathbf X_{ I_i^c,j}\|_\infty.
\end{equation}
 Combined with the triangular inequality, conditional on the event $\mathcal{E}_n$, guarantees that $T_{11}$ is bounded as
\begin{equation}
\| T_{11}\|_\infty \leq  \frac{2 \lambda_N r^2 }{n \zeta_N^2}  
 \left\|  \mathbf X_{  \hat A}^T \mathbf X_{ j}  \right\|_{\infty} .
 \end{equation}
 Moreover, the norm inequality    $ \| \bx\|_\infty \leq \| \bx\|_2$ holds for any vector $\bx $.
In combination with  Lemmas \ref{lem:7} and \ref{lemma:3},  
\begin{equation}\label{eq:NewT1}
\| T_{11}\|_\infty = \mathcal{O}_P \left(  \frac{2 \lambda_N r^2 }{n \zeta_N^{5 }} \right).
 \end{equation}

For the term $T_{12}$, we first observe that $\|\bx  \|_{\infty} \leq \| \bx \|_1$. Second, if we  use  equation \eqref{eq:step01} of Lemma \ref{lem:7}, and the result of Lemma \ref{lemma:3}
\begin{equation}\label{eq:NewT2}
\| T_{12}\|_\infty = \mathcal{O}_P\left(  \frac{  \lambda_N r^{3/2}  }{n \zeta_N  } \right).
 \end{equation}

We now discuss the term $T_2$. By the the consistency of the operator and the vector norms  we have 
\begin{equation}\label{eq:step100000}
\| T_2 \|_{\infty} \leq \frac{\lambda_N r}{n} \left \|  (\mathbf X_{ I_i^c,\hat A}^T \mathbf X_{ I_i^c,\hat A} )^{-1}   - (\mathbf X_{\hat A}^T \mathbf X_{\hat A} )^{-1} \right\| \|  \mathbf X_{ I_i^c,\hat A}^T \mathbf X_{ I_i^c,j}\|_{\infty}
\end{equation}
where  the operator norm  above is defined as 
\[
\left\|  (\mathbf X_{ I_i^c,\hat A}^T \mathbf X_{ I_i^c,\hat A} )^{-1}   - (\mathbf X_{\hat A}^T \mathbf X_{\hat A} )^{-1} \right\|  
= \sup_{\bv \neq 0} \frac{ \left \|  \left( (\mathbf X_{ I_i^c,\hat A}^T \mathbf X_{ I_i^c,\hat A} )^{-1}   - (\mathbf X_{\hat A}^T \mathbf X_{\hat A} )^{-1}  \right) \bv \right\|_\infty }{ \| \bv\|_\infty}.
\]

 We treat each term in  \eqref{eq:step100000}  separately.

Using $\bG=\mathbf X_{\hat A}^T \mathbf X_{\hat A}$ and $\bD=\mathbf X_{ I_i^c,\hat A}^T \mathbf X_{ I_i^c,\hat A} $ 
in Lemma \ref{lem:matrixineq} we have 
\begin{eqnarray} \label{eq:step02}
\left \|  (\mathbf X_{ I_i^c,\hat A}^T \mathbf X_{ I_i^c,\hat A} )^{-1}   - (\mathbf X_{\hat A}^T \mathbf X_{\hat A} )^{-1} \right\| \leq  \sqrt{r} \sqrt{ \lambda_{\min}^{-1}(\mathbf X_{ I_i^c,\hat A}^T \mathbf X_{ I_i^c,\hat A} ) + \lambda_{\min}^{-1}(\mathbf X_{ \hat A}^T \mathbf X_{  \hat A} )}.
\end{eqnarray}  
Moreover, on the event $\mathcal{E}_n$
\[
 \lambda_{\min}^{}(\mathbf X_{ I_i^c,\hat A}^T \mathbf X_{ I_i^c,\hat A} ) 
 \geq 
 \lambda_{\min}^{}(\mathbf X_{ \hat A}^T \mathbf X_{  \hat A} )
  \geq  \inf_{|A|=r} \lambda_{\min}^{}(\mathbf X_{  A}^T \mathbf X_{ A} )
   \geq \zeta_n^2,
 \]
 where the last inequality follows from Condition \ref{cond:re}.



Term $\|  \mathbf X_{ I_i^c,\hat A}^T \mathbf X_{ I_i^c,j}\|_{\infty}$ is bounded similarly as  with the term $T_{1}$.
 Therefore, we conclude
 \begin{eqnarray} \label{eq:step012}
\| T_2\|_\infty
= \mathcal{O}_P \left( 
 \frac{ \sqrt{2} \lambda_N r^{3/2} }{n \zeta_n  \zeta_N^{3 }}  
 \right).
\end{eqnarray} 

 \subsubsection{
Step II: Controlling  $\left\langle \mathbf X_{ I_i^c,j},  (\mathbb I -  P_{\hat A} )(\mathbf Y_{  I_i^c} - \mathbf X_{ I_i^c} \hat{\bbeta}_i)  \right\rangle_n $}

By definition of $\hat A$ we have $S \subset \hat A$ and  by Lemma \ref{lemma:3} that $| \hat A| \leq r$ with high probability. Hence, with $r $ as in \eqref{eq:step49},
the term  of interest is   upper bounded with 
\[
\sup_{A: |A|\leq r} \sup_{j \notin A} \left \langle \mathbf X_{ I_i^c,j} , (\mathbb I -  P_{  A} )  \boldsymbol{ \varepsilon} _{ I_i^c}\right \rangle_n.
\]
Remember $\| \bX_j\|_2=1$.
Note that for all $j \notin A$, $ | \bX_{j}^T P_{A } \boldsymbol\varepsilon| \leq \| P_{  A} \boldsymbol \varepsilon \|_2$. Hence, 
the expression in  the above display is then bounded with
\begin{equation}\label{eq:step10}
\max_{j \in \{1,\cdots,p\}} \left \langle  \mathbf X_{ I_i^c,j} ,   \boldsymbol{ \varepsilon} _{ I_i^c} \right \rangle_n + \sup_{A: |A|\leq r}  \frac{1}{n}\|P_{  A} \boldsymbol \varepsilon_{ I_i^c}\|_2.
\end{equation}
Observe that $\| \mathbf X_{ I_i^c,j}\|_2 \leq 1$, $|I_i^c| =N-n$ and that $\langle \cdot, \cdot \rangle$ denotes the empirical inner product.
Since $\{\varepsilon_i\}_{i=1}^n$ are i.i.d.  with bounded moments,  we have by the weighted Bernstein inequality, that there exists a constant $c>0$ such that for a sequence of positive numbers $u_n$
\begin{align}\nonumber
& P \left(\frac{n}{n-N}\max_{j \in \{1,\cdots,p\}}  \left| \left \langle \mathbf X_{ I_i^c,j}, \boldsymbol{ \varepsilon} _{ I_i^c}  \right \rangle_n  \right|\geq    u_n \right) \\  
&=P \left( \frac{1}{n-N} \sum_{i \in I_i^c}\left|  X_{ i,j}  { \varepsilon} _{i}   \right|\geq    u_n \right)
 \leq \exp \left\{- c  \frac{ (N-n)^2 u_n^2}{ 8 \sigma^2 } \right\}.
 \label{eq:step11}
\end{align}
For a choice of $u_n = 2 \sigma \sqrt{\log p /(N-n)}$ the above probability will converge to zero.

We now bound the second term in \eqref{eq:step10}. Lemma \ref{lemma:3} implies that, conditional on the event  $\mathcal{E}_n$
$$\sqrt{ (n-N)} \|P_{\hat A} \boldsymbol{ \varepsilon}_{ I_i^c} \| _2/\sigma \leq \sup_{|A|=r} \sqrt{ (n-N)} \|P_{  A} \boldsymbol{ \varepsilon}_{ I_i^c} \| _2/\sigma.$$ 
 Furthermore, $\sup_{|A|=r} \sqrt{ (n-N)} \|P_{  A} \boldsymbol{ \varepsilon}_{ I_i^c} \| _2/\sigma$
  has  a $\chi_{r}$ distribution. Hence, tail bounds of the Chi-squared distribution (Lemma 1 of \cite{ML00}) lead to
\begin{equation}\label{eq:step12}
P \left((n-N) \sup_{A: |A|\leq r} \|P_{ A} \boldsymbol \varepsilon_{ I_i^c}\|_2^2 \geq    \sigma^2 r  { { ( 1+ 4   \log p)}{ }}\right) \leq \exp\{- 3/2 r  \log p\}.
\end{equation}
 
Plugging in \eqref{eq:step11} and \eqref{eq:step12} into \eqref{eq:step10}, we obtain
\begin{eqnarray} \label{eq:step022}
\left|  \left\langle \mathbf X_{ I_i^c,j},  (\mathbb I -  P_{\hat A} )(\mathbf Y_{  I_i^c} - \mathbf X_{ I_i^c} \hat{\bbeta}_i)  \right\rangle_n \right| \leq 2 \sigma \sqrt{\frac{(n-N) \log p}{n^2}} +2 \sigma \sqrt{\frac{ 4 r  \log p}{n^2 (n- N) }}
\end{eqnarray} 
with probability $1-2p^{-c}$.

Combining        \eqref{eq:NewT1}, \eqref{eq:NewT2}, \eqref{eq:step012} and \eqref{eq:step022} we obtain
\[
\left| \left\langle   \mathbf X_{ I_i^c,j},\mathbf Y_{  I_i^c} - \mathbf X_{ I_i^c} \hat{\bbeta}_i \right\rangle_n \right| \leq  \frac{2 \lambda_N r^2 }{n \zeta_N^{5 }} + \frac{2 \lambda_N r^{3/2}  }{n \zeta_N  } + \frac{ \sqrt{2} \lambda_N r^{3/2} }{n \zeta_n  \zeta_N^{3}}  
  +2 \sigma \sqrt{\frac{(n-N) \log p}{n^2}} +2 \sigma \sqrt{\frac{ 4 r  \log p}{n^2 (n- N) }},
\]
with probability of $1-p^{1-c}$, $c \geq 1$. 

Moreover, according to Lemma \ref{lem:kkt}
the RHS of the expression above needs to be smaller than $\lambda_n-\lambda_N$, for the event $\Omega_n$ to hold. This leads to the relation of 
\begin{eqnarray*}  
\lambda_n 
&\geq& \lambda_N +
  \frac{2 \lambda_N r^2 }{n \zeta_N^{5 }} + \frac{2 \lambda_N r^{3/2}  }{n \zeta_N  } + \frac{ \sqrt{2} \lambda_N r^{3/2} }{n \zeta_n  \zeta_N^{3}} 
  +2 \sigma \sqrt{\frac{(n-N) \log p}{n^2}} +2 \sigma \sqrt{\frac{ 4 r  \log p}{n^2 (n- N) }},
%
%
\end{eqnarray*} 
which completes the proof. \qed

\subsection{Proof of Theorem \ref{thm:31}}

We  show that the condition of Lemma \ref{lem:2} holds with high probability for the Lasso estimator $\hat{\boldsymbol\beta}  = \hat{\boldsymbol\beta} (\lambda_n)$. To that end, we  define an event 
$$\Omega_n= \left\{ \left| \left \langle  \bX_{I_i^c,j}, \bY_{I_i^c} - \bX_{I_i^c} \hat{\bbeta}  \right\rangle_n  \right|      \leq \lambda_N/n-\lambda_n\right\}, \qquad \mbox{for all } j: \hat\beta_{j}=0 $$ and show that it has a large probability. 
In the above, we utilized the following notation
\[
\frac{1}{n}\sum_{l\notin I_i}   X_{lj}(Y_l-\bX_l\hat{\boldsymbol\beta} ) := \left\langle \mathbf X_{ I_i^c,j},  \mathbf Y_{  I_i^c} - \mathbf X_{ I_i^c} \hat{\bbeta}   \right\rangle_n.
\]
Let   $\hat S$  be  the  set of non-zero coefficients of the Lasso estimator $\hat{\boldsymbol\beta} $.   Let $P_{\hat S}$
 be the
 projection operator into the space spanned by all variables in the set $\hat S$.
By repeating similar decomposition analysis  developed in  Theorem \ref{thm:21}, $\left\langle \mathbf X_{ I_i^c,j},  \mathbf Y_{  I_i^c} - \mathbf X_{ I_i^c} \hat{\bbeta}   \right\rangle_n$ is bounded with
  \[
 \left\langle \mathbf X_{ I_i^c,j},  (\mathbb I -  P_{\hat S} )(\mathbf Y_{  I_i^c} - \mathbf X_{ I_i^c} \hat{\bbeta} )  \right\rangle_n
 +
  \left\langle \mathbf X_{ I_i^c,j},   P_{\hat S}  (\mathbf Y_{  I_i^c} - \mathbf X_{ I_i^c} \hat{\bbeta} )  \right\rangle_n,
 \]
the  proof setup  of  Step I and II of Theorem \ref{thm:21} extends easily.

 Controlling $ \left\langle \mathbf X_{ I_i^c,j},  (\mathbb I -  P_{\hat S} )(\mathbf Y_{  I_i^c} - \mathbf X_{ I_i^c} \hat{\bbeta} )  \right\rangle_n $ follows by adapting the proof of Theorem \ref{thm:21} to a different projection matrix.
This term is upper bounded by utilizing KKT conditions with
\[
\frac{\lambda_n}{n} \mathbf X_{ I_i^c,j}^T \mathbf X_{ I_i^c,\hat S} (\mathbf X_{ I_i^c,\hat S}^T \mathbf X_{ I_i^c,\hat S} )^{-1} (\mbox{sign} \hat{\bbeta} ( \lambda_n)) 
\]
Expression above is bounded by 
\begin{eqnarray}  
&&\frac{\lambda_n}{n}   \mathbf X_{ I_i^c,j}^T \mathbf X_{ I_i^c,\hat S} (\mathbf X_{  \hat S}^T \mathbf X_{  \hat S} )^{-1} (\mbox{sign} \hat{\bbeta} ( \lambda_n))  \nonumber \\
&+& 
\frac{\lambda_n |\hat S| }{n}   \left\|  \left[ (\mathbf X_{ I_i^c,\hat S}^T \mathbf X_{ I_i^c,\hat S } )^{-1}   - (\mathbf X_{\hat S}^T \mathbf X_{\hat S} )^{-1} \right]  \mathbf X_{I_i^c,\hat S}^T \mathbf X_{I_i^c,j}\right\|_{\infty} \nonumber \\
&:=& U_1 + U_2.
\end{eqnarray}
We proceed to bound $U_1$ and $U_2$ independently.
 By Condition  \ref{cond:ir}, the first term, $U_1$, can be bounded with 
\[
\frac{\lambda_n}{n}  +\frac{\lambda_n | \hat S | }{n}   \left\| (\mathbf X_{  \hat S}^T \mathbf X_{  \hat S} )^{-1} \left[ \mathbf X_{ I_i^c,j}^T \mathbf X_{ I_i^c,\hat S}   - \mathbf X_{  j}^T \mathbf X_{ \hat S}  \right] \right\|_\infty.
\]
 By the H\"older's inequality,    the expression above is bounded from above by
 \[
\frac{\lambda_n}{n}  +\frac{\lambda_n | \hat S | }{n}  \sup_{|A| \leq r} \lambda_{\min}^{-1}\left( \mathbf X_{  A}^T \mathbf X_{   A} \right) \sup_{|A| \leq r} \sup_{j \notin A} 
\left\|    \mathbf X_{ I_i^c, A}^T\mathbf X_{ I_i^c,j}    -  \mathbf X_{ A}^T \mathbf X_{  j}     \right\|_\infty,
\]
where $r$ denotes the size of the set $\hat S$. Its   size  follows from   Lemma B.1 of   \cite{BRT09},  i.e.  
\begin{equation}\label{eq:step050100}
|\hat S| \leq C{s \lambda_{\max}\left(\frac{1}{n} \mathbf X^T \mathbf X\right)}/{\zeta_n} :=r
\end{equation}
with probability approaching 1.
Next, by  Condition \ref{cond:re} and Lemma \ref{lemma:3} we have, 
$ \sup_{|A| \leq r} \lambda_{\min}^{-1}\left( \mathbf X_{  A}^T \mathbf X_{   A} \right) \leq \frac{1}{\zeta_n^2}$ and $\left\|    \mathbf X_{ I_i^c, A}^T\mathbf X_{ I_i^c,j}    -  \mathbf X_{ A}^T \mathbf X_{  j}     \right\|_\infty \leq 
\left\|    \mathbf X_{ I_i^c, A}^T\mathbf X_{ I_i^c,j}    -  \mathbf X_{ A}^T \mathbf X_{  j}     \right\|_2 
\leq \sqrt{r}/\zeta_{N-n}$, respectively.
Combining all of the above,
\begin{equation}\label{eq:step0501}
U_1 = \mathcal{O}_P \left(  \frac{\lambda_n}{n} + \frac{\lambda_n r^{3/2}  }{n {\zeta_n^2 \zeta_{N-n}}} \right).
\end{equation}
 
 Regarding   $U_2$ we note that 
  \eqref{eq:step02} still holds with $\hat S$ replacing $\hat A$. Moreover,  Lemma \ref{lem:7} still applies. Hence,
   we can conclude 
\begin{eqnarray} \label{eq:step012a}
U_2
= \mathcal{O}_P \left(
 \frac{\sqrt{2}\lambda_n r^{3/2}}{n \zeta_n \zeta_{N-n}^3}
  \right).
\end{eqnarray}

  Controlling $ \left\langle \mathbf X_{ I_i^c,j},   P_{\hat S}  (\mathbf Y_{  I_i^c} - \mathbf X_{ I_i^c} \hat{\bbeta} )  \right\rangle_n$ is done as   in Theorem \ref{thm:21}.
  The same steps still apply by noticing that $S \subseteq  \hat S $ (as Conditions \ref{cond:ir} and \ref{cond:re} hold) and that Lemma \ref{lem:7} holds.  Hence, we obtain 
%
%

\begin{eqnarray} \label{eq:step0220}
\left|  \left\langle \mathbf X_{ I_i^c,j},  (\mathbb I -  P_{\hat A} )(\mathbf Y_{  I_i^c} - \mathbf X_{ I_i^c} \hat{\bbeta} )  \right\rangle_n \right| \leq 2 \sigma \sqrt{\frac{(n-N) \log p}{n^2}} +2 \sigma \sqrt{\frac{ 4 r  \log p}{n^2 (n- N) }}
\end{eqnarray} 

Adding results of \eqref{eq:step0501} and \eqref{eq:step012a} with the one above, we obtain
\begin{eqnarray*}  
\left|  \left\langle \mathbf X_{ I_i^c,j},   \mathbf Y_{  I_i^c} - \mathbf X_{ I_i^c} \hat{\bbeta}    \right\rangle_n  \right|
 &\leq&   \frac{\lambda_n}{n} + \frac{\lambda_n r^{3/2}  }{n {\zeta_n^2 \zeta_{N-n}}}
 +
 \frac{\sqrt{2}\lambda_n r^{3/2}}{n \zeta_n \zeta_{N-n}^3}
 \\
&+&2 \sigma \sqrt{\frac{(n-N) \log p}{n^2}} +2 \sigma \sqrt{\frac{ 4 r  \log p}{n^2 (n- N) }}
\end{eqnarray*}  
with probability $1-p^{1-c}$, $c>1$. 
Note that for all $\lambda_n \geq \sigma \sqrt{\log p/n}$, there exists a  constant $C'>1$,  such that $r \leq  C{s \lambda_{\max}\left(\frac{1}{n} \mathbf X^T \mathbf X\right)}/{\zeta_n} \leq C' s $.
According to the definition of $\Omega_n$  it suffices to have the RHS above bounded with $\frac{\lambda_N}{n } - \lambda_n $. In turn, this implies 
\begin{eqnarray*}  
\frac{\lambda_N}{n } 
&\geq&   \lambda_n +\frac{\lambda_n}{n} + \frac{\lambda_n C'^{3/2}  s^{3/2}  }{n {\zeta_n^2 \zeta_{N-n}}}
 +
 \frac{\sqrt{2}\lambda_n C'^{3/2}s^{3/2}}{n \zeta_n \zeta_{N-n}^3} 
+2 \sigma \sqrt{\frac{(n-N) \log p}{n^2}} +2 \sigma \sqrt{\frac{ 4 C' s  \log p}{n^2 (n- N) }}.
\end{eqnarray*} 
\qed

\subsection{Proof of Theorem \ref{thm:final}}
The main ingredient of the proof is based on the intermediary results  stated in Theorems \ref{thm:21} and \ref{thm:31}.
The first part of the statement follows by utilizing  Theorem \ref{thm:21} in order to conclude that $S \subseteq \hat S_\tau$, with probability close to 1.
Unfortunately, as conditions  of Theorems \ref{thm:31}  contradict those of Theorem \ref{thm:21}, we cannot easily use their results  to conclude the second part of the statement. Therefore, this paper
develops  and presents a new method for finding the optimal value of the tuning parameter $\lambda_N$. It is based on finding the optimal bias-variance tradeoff, where bias is replaced with  variable selection  error and  variance with prediction error.
 It allows  good, but not the best, prediction properties while obtaining desired variable selection properties. We  split the proof into two parts. The first   bounds the number of false positives, whereas the second     finds the optimal choice of $\lambda_N$.

\subsubsection{Bounding False Positives}


Let $\hat S_i (\lambda_N) = \cup_{k=1}^K \hat S_i(\lambda_N,k)$. Assume that the weighted maximal-contrast subbaging procedure is not worse than a random guess (see Theorem 1 of \cite{MB11}), i.e. for 
\begin{equation}\label{eq:rg}
|S^c| { \mathbb{E}\left [ |S \cap \hat S_i(\lambda_N)| \right]}{} \geq {|S|}{}{\mathbb{E}}\left [|S^c \cap \hat S_i(\lambda_N)|\right]
\end{equation}
then the expected number of falsely selected variables is bounded by
$$
{\mathbb{E}}[|S^c \cap \hat S_\tau|]\leq   2 \frac{\sqrt{b}}{1+\sqrt{b}}  \frac{ K^M  \left(\max_{1 \leq k \leq K} {\mathbb{E}}|\hat S_1 (\lambda_N,  k)| \right)^M}{ p^{M-1}},
$$
for all choices of $\tau \geq  \frac{1}{2(1+\sqrt{b})}$.

{\it Proof}


 Define a binary random variable 
$
H_K^\lambda=1\{ j \subseteq \cap_{l=1}^m \hat S_{mq+1-l}(\lambda_N) \}
$
for all variables $j \subset \{1,\dots,p\}$ with $   \hat S_i (\lambda_N) = \cup _{k=1}^K \hat S_i(\lambda_N, k) $ and $q=1,\cdots,b$. Remember that  $mb=d$.   Then, the   selection probability is expressed as a function of simultaneous selection probability
\begin{align*}
{\pi}^*_j(\lambda_N)&:= \frac{\sqrt{b}}{1+\sqrt{b}}\mathbb P^*(j \subseteq \cap_{l=1}^m \hat S_{mq+1-l}  (\lambda_N)) + \frac{1}{2(1+\sqrt{b})}\\
&=\frac{\sqrt{b}}{1+\sqrt{b}}E(H_j^\lambda | Z) +\frac{1}{2(1+\sqrt{b})}
\end{align*}
where  the probability $\mathbb P^*$ denotes probability with respect to the random sample splitting. 
Here $Z=(X_1,\dots,X_n, Y_1,\dots,Y_n)$ denotes the whole original sample of size $n$. 
Then, for $p_j^*(\lambda_N)=\mathbb P^*(j \subseteq \cap_{l=1}^m \hat S_{mq+1-l}(\lambda_N))$, we have
\[
P(\pi_j^*(\lambda_N) \geq {\tau})   = P\biggl( p_j^* (\lambda_N) \geq \frac{1+\sqrt{b}}{\sqrt{b}} \biggl( \tau - \frac{1}{2(1+\sqrt{b})}\biggl)  \biggl)
\]
Then,
\begin{align*}
E(H_j^\lambda|Z)&=E({p}^*_j(\lambda_N))=P(H_K^\lambda=1) \\
&\leq \prod_{l=1}^m P(K \subseteq  \hat S_{mq+1-l})=P(j \subseteq \hat S_1(\lambda_N))^m.
\end{align*}
Here  $\hat S_1 (\lambda_N)  $ denotes  $\cup _{k=1}^K \hat S_1 (\lambda_N, k)$.   
By Markov inequality  for exchangeable Bernoulli random variables,  we know that 
\begin{align*}
P({p}^*_j(\lambda_N) \geq \xi) \leq {E({p}^*_j(\lambda_N))}/{\xi} \leq {P(j \subseteq \hat S_1(\lambda_N))^m}/{\xi}.
\end{align*}
By arguments similar to that of Theorem 1 in \cite{MB11}, we know that 
$$
P(j \subseteq \hat S_1(\lambda_N)) \leq {E|\hat S_1(\lambda_N)|}/{p} \leq  {\max_{1 \leq k \leq K} K E|\hat S_1(\lambda_N,  k)|}/{p}.
$$
%
%

Hence,
for a threshold $\tau \geq  \frac{1}{2(1+\sqrt{b})}$ we have 
\begin{eqnarray*}
P(\pi_j^*(\lambda_N) \geq {\tau})  &\leq&   \frac{ \sqrt{b}{K^m(\max_{1 \leq k \leq K} E|\hat S_1 (\lambda_N, k)| )^m}}{(1+\sqrt{b})(\tau - \frac{1}{2(1+\sqrt{b})}) p^m} 
\\
&\leq& 2 \frac{\sqrt{b}}{1+\sqrt{b}} \frac{ K^m  (\max_{1 \leq k \leq K} E|\hat S_1 (\lambda_N,  k)| )^m}{ p^m}.
\end{eqnarray*}
Together with
$
E[|S^c\cap \hat S_\tau|]=\sum_{j \in S^c} P(\max_{\lambda_N} \pi_j^*(\lambda_N) \geq \tau)$, it leads to the 
 $$E[|S^c \cap \hat S_\tau|] \leq   2 \frac{\sqrt{b}}{1+\sqrt{b}}  \frac{ K^m  (\max_{1 \leq k \leq K} E|\hat S_1(\lambda_N, k)| )^m}{ p^{m-1}}.
$$

\subsubsection{ Optimal choice of $\lambda_N$ }

Next, we show that proposed aggregated sub-Lasso estimators are better than the random guess  in the sense of \eqref{eq:rg}. As expected, such property is not  achieved for all values of $\lambda_N$.  
By analyzing equation \eqref{eq:rg} and utilizing results of Theorem \ref{thm:21}, we infer that the sequence of $\lambda_N$ that achieves control of false positives and allows results of Theorem \ref{thm:21} to hold, is the optimum of the following  optimization problem
\begin{eqnarray}\label{eq:so}
&\min & q  \\ \nonumber
& s.t.& E|{\hat{S}}_i(\lambda_N)| \leq \frac{1}{2}p^{1-1/m}
\\\nonumber
& & P(\mathcal A_q (\lambda_N)) \geq 1- \delta
\\\nonumber
&& \lambda_N >0, q \geq 0, \delta >0
\end{eqnarray}
where  the events 
$$
\mathcal A_q (\lambda_N) =\bigcap_{j=1}^p \left\{2 \frac{1}{n} \sum_{l \in I_i} w_{k,l} |\varepsilon_i X_{lj}| \leq \lambda_N + q \right\}.
$$ Although the problem \eqref{eq:so}
 is stated in terms of $q$, the paper demonstrates that the optimal value of $q$ leads to the optimal value of $\lambda_N$.
 
 We provide  a few comments on the  optimization problem  \eqref{eq:so}.  
While allowing deviations of  the {\bf{IR}}$(N)$  conditions, the first constraint is sufficient to guarantee that sub-Lasso estimators are better than random guessing (i.e. that \eqref{eq:rg} is satisfied). The second constraint restricts our attention to a sequence of random coverage sets $\mathcal{A}_q(\lambda_N)$. They control  variable selection properties, whereas  the first constraint intrinsically controls predictive properties. Hence, they cannot be satisfied simultaneously on sets  $\mathcal{A}_0(\lambda_N)$.  For $q=0$ the best prediction is still achievable,  but variable selection   is not. Hence, we need to allow for possible deviation of the smallest  sets $\mathcal{A}_0(\lambda_N)$  by allowing small perturbations of size  $q$.   Our goal is to find the smallest possible perturbation $q$, which allows high probability bounds on the selection of the false negatives and     simultaneously  controls the size of the selected sets in Sub-Lasso estimators.

We further represent conditions of the stochastic problem \eqref{eq:so} in a concise way.
Note that from KKT conditions of each of the sub-Lasso problems and definition of the sets $\mathcal{A}_q$, we can see that for all $j$ such that $\hat{ \beta} _{i:k,j}(\lambda_N) \neq 0$  
\[
\frac{1}{n} \sum_{l \in I_i} w_{k,l} | \left[ \tilde  Y_l- \tilde \bX_l   \hat{\boldsymbol\beta} _{i:k}(\lambda_N) \right] X_{lj}  |    =   {\lambda_N} \mbox{sign}(\hat{ \beta} _{i:k,j}(\lambda_N)).
\]
Moreover, triangle inequality upper bounds the LHS with 
\[
\frac{1}{n} \sum_{l \in I_i} w_{k,l} | \left[ \tilde  \bX_l \bbeta^*- \tilde \bX_l   \hat{\boldsymbol\beta} _{i:k}(\lambda_N) \right] X_{lj}  | + \frac{1}{n} \sum_{l \in I_i} |w_{k,l} X_{lj} \varepsilon_l|.\]
On the set $\mathcal{A}_q(\lambda_N)$ we have that the last term is bounded with $(\lambda_N +q)/2$. This leads to 
\begin{equation}\label{eqn:brt09}
\frac{1}{n} \sum_{l \in I_i} w_{k,l} \left\| \left[ \tilde \bX_l  \bbeta^*- \tilde \bX_l   \hat{\boldsymbol\beta} _{i:k}(\lambda_N) \right] X_{lj}  \right\|_\infty \geq {\lambda_N}   - \frac{\lambda_N+q}{2}.
\end{equation}
Then, on the event $\mathcal{A}_q$ from  the from the KKT conditions of $\hat{\boldsymbol \beta} _{i:k }(\lambda_N) \neq 0$ for all $j$, such that $\hat{ \beta} _{i:k,j}(\lambda_N) \neq 0$,
\begin{eqnarray*}
&=& \frac{1}{n^2} \sum_{j=1}^p  \left\| {\tilde \bX_{k,j}}^T  \left(\tilde \bX_k \bbeta^*-\tilde \bX_k \hat{\boldsymbol \beta} _{i:k }(\lambda_N) \right)\right\|_2^2
\\
&\stackrel{(i)}{\geq}& \frac{1}{n^2} \sum_{j \in \hat S_i(\lambda_N,k)  }  \left\| {\tilde \bX_{k,j}}^T  \left(\tilde \bX_k \bbeta^*-\tilde \bX_k \hat{\boldsymbol \beta} _{i:k }(\lambda_N) \right)\right\|_2^2
\\
&\stackrel{(ii)}{=}& |{\hat{S}}_i(\lambda_N,k)| (\lambda_N-q)^2/4,
\end{eqnarray*}
where $(i)$ follows from the non-negativity of the summands and $(ii)$   from \eqref{eqn:brt09} and   inequality of the vector norms $\| \bx\|_\infty \leq \| \bx \|_2$, for a  vector $\bx$.
All of the above leads to 
\begin{eqnarray}\label{eq:temp17}
|{\hat{S}}_i(\lambda_N,k)| 
&\stackrel{(i)}{\leq}&\frac{4 \lambda_{\max}( { \tilde{\mathbf X}_{k}}^T \tilde{ \mathbf X}_{k} )}{ (\lambda_N -q)^2 n^2} \left\| \tilde {\mathbf X}_k \left[{\boldsymbol\beta}^* -  \hat{\boldsymbol\beta} _{i:k}(\lambda_N)\right] \right\|_2^2,
\end{eqnarray}
where inequality $(i)$ follows  from the above manipulations and   inequality of the norms $\|\bM^T \bx \|_2^2 \leq \lambda_{\max}(\bM^T \bM) \| \bx\|_2^2$ , with a matrix $\bM$ and a vector $\bx$. 
Moreover,  from Lemma \ref{thm:1} (ii)  we have that
$$
\frac{1}{n} \| \tilde {\mathbf X}_k ({\boldsymbol\beta}^* - \hat{\boldsymbol\beta} _{i:k}(\lambda_N)) \|_2^2 \leq \frac{(16 \lambda_N^2 +q^2
) s}{\zeta_N^2}.
$$   The tower property of expectations together, with \eqref{eq:temp17},
 \begin{equation}\label{eq:temp17a}
{\mathbb E}|{\hat{S}}_i(\lambda_N)| \leq  \frac{ (64\lambda_N^2  + 4q^2 )s K }{n (\lambda_N-q)^2 \zeta_N^2}  \max_{1\leq k \leq K} \mathbb{E}_{\bw} \left(\lambda_{\max}({ \tilde{\mathbf X}_{k}}^T \tilde{ \mathbf X}_{k} ) \right).
\end{equation}
In the above expressions $\lambda_N \geq c_4 \sqrt{\log p/n}$, for some $c_4>0$.
We are left to evaluate the size of sets $\mathcal{A}_q$. This step of the proof is based on a Bernstein's inequality for the exchangeable weighted bootstrap sequences contained in an intermediary result Lemma \ref{lem:a}. 
From Lemma  \ref{lem:a} we have

 \[
\mathbb P \biggl(    \Bigl| \left \langle \bvarepsilon_{I_i} ,   \bD_{  \mathbf w} \bX_{I_i}  \right \rangle_n \Bigl| > u_n \biggl) \leq  
\exp \left\{ N \log   \mathbf w _{2}  -\frac{n^2 u_n^2 }{ 2\sigma^2   N \|  \bX_{ I_i} \|_{\infty,2}  +2  n  c u_n \|  \bX_{ I_i} \|_{\infty,\infty}} \right\},
 \]
with $ \mathbf w _{2 } $ defined in Condition \ref{cond:w}. In display above, 
 $\|  \bX_{ I_i} \|_{\infty,2}:= \max{} \Bigl \{ { X}_{I_ij}^2    :  I_i \subset \{1,\cdots, n \},  \Bigl.$ $\Bigl. |I_i|=N  , 1 \leq j \leq p\Bigl\}$
  and
  $ \|  \bX_{ I_i} \|_{\infty,\infty}:= \max \Bigl \{|{ X} |_{I_i j}   :  I_i \subset \{1,\cdots, n \}, |I_i|=N, 1\leq j \leq p \Bigl \}$.
%
%
%
%
%
Hence, for $u_n =t \sigma \sqrt{2}$,  and $t$ such that for    two constants $c_5$ and $c_6$ 
\begin{eqnarray}\label{eq:temp17b}
c_6 \sqrt{\log p/ N } \geq  t \geq   c_5 \sqrt{\log p/ n } 
\end{eqnarray} 
and $  \|  \bX_{ I_i} \|_{\infty,\infty} \leq  c_6 \sqrt{ N /  \log p \    }$ 
  then there exists a positive constant $c_0>1$
\[
P\left( \frac{1}{n}\bigl| \sum_{l \in I_i} w_{k,l} X_{lj} \varepsilon_l \bigl| >{t} \sigma \sqrt{2}  \right) \leq 2  \exp \left\{ -c_0 \frac{n t^2 }{ \|  \bX_{ I_i} \|_{\infty,2} } \right\}.
\]
In particular, any choice of $ c_5 <1<c_6$  would work; even those  as large as  $c_5 = N/n$, $c_6 =n$ satisfy previous constraints.
For a choice of $t$ of $\lambda_N - q$ we then  have 
\begin{eqnarray}\label{eq:temp18}
P(\mathcal{A}_q(\lambda_N))\geq1-2 \exp\left\{ -c n (\lambda_N - q)^2/\|  \bX_{ I_i} \|_{\infty,2}\right\}  \to 1,
\end{eqnarray} 
as long as 
$
\log  \mathbf w _{2 }^N < n/N \log p
$, 
which in turn is guaranteed by the Condition \ref{cond:w}.\\

Now with \eqref{eq:temp17a}, \eqref{eq:temp17b} and \eqref{eq:temp18} we can represent the solution to the stochastic optimization problem \eqref{eq:so} as a solution to the following program  
\begin{eqnarray}\label{eq:so1}
&\min &  q \geq 0\\ \nonumber
& s.t.& \frac{(\lambda_N -q)^2}{\lambda_N^2 +q^2/4} \geq {\frac{16 s  p^{1/m} }{N \zeta_N^2 p }}\max_{1\leq k \leq K} \mathbb{E}_{\bw} \left(\lambda_{\max}({ \tilde{\mathbf X}_{k}}^T \tilde{ \mathbf X}_{k} ) \right),\\\nonumber
&& P(\mathcal{A}_q(\lambda_N))\geq1-2 \exp\left\{ -c n (\lambda_N - q)^2/\|  \bX_{ I_i} \|_{\infty,2}\right\}  \\\nonumber
&& \max\{c_4,c_5\} \sigma \sqrt{\frac{\log p}{n}} < \lambda_N \leq c_6 \sigma \sqrt{\frac{\log p}{N}}.
\end{eqnarray}
for   constants $ \max\{c_4,c_5\}  < c_6   $.
The RHS of the last constraint inequality is a consequence of Lemma \ref{thm:1} (which is used numerous times in the steps of the proof).
The first constraint of the above problem can be reformulated as 
\[
\lambda_N^2 >  q^2+{32 q^2 {\frac{ s  p^{1/m}   }{N \zeta_N^2 p }}} \Lambda_K \left({1-\frac{16 s  p^{1/m} }{N \zeta_N^2 p }} \Lambda_K \right)^{-1} ,
\]
with $\Lambda_K= \max_{1\leq k \leq K} \mathbb{E}_{\bw} \left(\lambda_{\max}({ \tilde{\mathbf X}_{k}}^T \tilde{ \mathbf X}_{k} ) \right)$.
Then we can see that the optimal values of $q$ are  of the order of $  c_7\sigma \sqrt{\frac{\log p}{n}}$ for a constant $c_7>0 $ that satisfies  
$$
c_7 \leq \sqrt{\frac{n-N}{n}} \frac{\sqrt{\zeta_N} + 2 \sqrt{  \Lambda_K}}{2 \sqrt{\zeta_N} + \sqrt{\zeta_N} / \lambda_{\min}(\bX_{ I_i^c}^T \bX_{ I_i^c})} .
$$

Notice that the  optimal value  of $q$ allows sets  $\mathcal{A}_q(\lambda_N)$ to have large coverage probability.
$c_4,c_5,c_6$ and $c_7$ are constants; they satisfy $0<     \max\{c_4,c_5\} +c_7 < c_6$. They are not close  ,  as there is a great deal of latitude as to which number once can choose. 
 For example, constant $c_6$ can be chosen to be $\max\{c_4,c_5\} +1$ as constant $c_7 \leq1$.
All of the above 
 results  in the choice of the optimal value of the tuning parameter $\lambda_N$ as follows
$ c_8 \sigma \sqrt{\frac{\log p}{n}} \leq  \lambda_N \leq  c_9 \sigma \sqrt{\frac{\log p}{n}}$, 
where 
$c_8=\max\{c_1,c_4,c_5\} \leq 1$
and 
$c_9 =\min\{c_2,(c_6-c_7)\} >1$.

 \qed


 \subsection{Proof of Theorem \ref{thm:optimal}}
 
\subsubsection{Part(a)}
To prove the part (a), we use the standard technique of Fanno's lemma  in order to reduce the minimax bound to one problem of testing $M+1$ hypothesis. We split $p$ covariates into $M \geq s$ disjoint subsets $J_1,\dots, J_M$, each of size $p/M$.  Let  $ {J}_l$ be a collection of disjoint sets each of sparsity $s$, which we denote with $\{\mathcal{J}_l\}$. Hence, each subset $J_l$ is a collection of ${p/M\choose s}$, $s$-sparse sets. We proceed by defining probability measures $\mu_l$ on the $\mathbb{B}_0(s)$ ball   to be Dirac measures at $\bbeta_l$
 where $\bbeta_l$ is chosen as follows. We define $\bbeta_l$ for $l \neq 0$ as  linear combination of vectors in $\mathbb{Z}^p$
 \[
 \bbeta_l = \sum_{ \mathbf z \in \mathbb{Z}^p} \theta_{\mathbf z,l}  \mathbf z,
 \]
 with $\theta_{\mathbf z,l} =1$ if $\mathbf z=(z_1,\dots,z_p)=(1\{1\in \mathcal{J}_l\},\dots,1\{p\in \mathcal{J}_l\})$ and zero otherwise.
Obviously, all $\bbeta_l \in \mathbb{B}_0(s)$ and have $\mathcal{J}_l$ as sparsity pattern. 
 These measures $\mu_l$ are chosen in such a way that for each $l \geq 1$ there exists a set $\mathcal{J}_l$ of cardinality $s$ such that 
 $\mu_l\{S \subseteq \mathcal{J}_l\}=1$
 and all the sets $\mathcal{J}_l$ are  distinct.  The measure $
 \mu_0$  is the Dirac  measure at  $0$. 
 Consider these $\mu_l$ as priors on $\mathbb{B}_0(s)$ ball and define the corresponding posteriors $\mathbb{P}_0,\dots,\mathbb{P}_M$ by
$
\mathbb{P}_l(A) =\int_{\boldsymbol \beta \in \mathbb{B}_0(s)} P_{\boldsymbol\beta}(A) d \mu_l(   \bbeta).
 $

With this choice of $\bbeta_l$ we can easily check  
\begin{equation}\label{eq:max1}\mathcal{K}(\mathbb P_{\boldsymbol\beta_l},\mathbb P_{0}) \leq n \rho_n,\end{equation}
where $\mathcal{K}$ denotes the Kullback-Leibler divergence between two probability measures and for   $\rho_n   < \infty$ and such that 
\begin{equation}\label{eq:max}
\max\left\{ \sum_{l =1}^n \frac{ E (\mathbf X_l \mathbf v )^2  }{\sqrt{n} \| \mathbf v_S\|_2}: |S|\leq s, \mathbf v \in \mathbb{R}^p, \mathbf v \neq 0, \mathbf v \in \mathbb{C}(3, S) \right\} <\rho_n.
\end{equation}
Next, observe that
\[
\inf_{ J} \sup_{\boldsymbol\beta \in \mathbb{B}_0(s)} P_{\boldsymbol\beta}\left( S \not\subseteq J\right) \geq \inf_{ J}  \sup_{l=1,\cdots, M} \sup_{\bbeta \in \Theta_l}P_{\bbeta} \left( S \not\subseteq J \right)
\]
for $\Theta_l = \{\bbeta: \mbox{supp}(\bbeta) \subset \mathcal{J}_l\}$. By   the Scheffe's theorem and the first Pinsker's inequality (see Lemma 2.1 and 2.6 of \cite{T09}),   the RHS above can be lower bounded with
\[
  1-  \frac{1}{M}\sum_{l=1}^M  \| \mathbb{P}_{ \bbeta_l}- \mathbb{P}_0 \|_{TV}  \geq 1-  \frac{1}{M}\sum_{l=1}^M (1-\frac{1}{2}\exp\{-\mathcal{K}(\mathbb P_{\boldsymbol\beta_l},\mathbb P_{0})\}),
\]
where $\|\|_{TV}$ denotes total variation distance between two probability measures. 
Notice that we can choose the sets within a collection $\mathcal{J}_l$ into $ {p/M \choose s}$ ways. Together with \eqref{eq:max1}   we have
\[
\inf_{ J} \sup_{\boldsymbol\beta \in \mathbb{B}_0(s)} P_{\boldsymbol\beta}\left( S \not\subseteq J\right) \geq  \frac{1}{2} \exp\{ \log {p/M \choose s} -   n \rho_n \} \geq   \frac{1}{2} \exp\{ s^2 \log (p/s^2) - n \rho_n\}.
\]
It suffixes to notice that the RHS is bigger than $p^{1-c'}$ under conditions of the theorem.

 \subsubsection{Part(b)}
To prove part (b), we  use  the  Assouad's lemma with appropriately chosen  hypothesis    to reduce the minimax bound to $Q$ problems of testing only $2$ hypothesis.
Consider the set of all binary sequences of length $p$ that have exactly $s$ non zero elements, 
$$\Omega=\{\omega=(\omega_1,\dots,\omega_p), \omega_i \in\{0,1\} : \| \omega\|_0=s\}  .$$
 Note that the cardinality of this set is $|\Omega|=2^s{p \choose s}$.
 Let $\rho(\omega,\omega')$ be the Hamming distance between $\omega$ and $\omega'$, that is $\rho(\omega,\omega') = \sum_{q=1}^Q 1\{\omega_q \neq \omega'_q\}$. 
 First,  the focus is on accessing the cardinality of the set $\{\omega' \in \Omega: \rho(\omega,\omega')\leq1\}$. Observe that one can choose a subset of size $1 $ where $\omega$ and $\omega'$ agree and then choose the other $s-1$ coordinates arbitrarily. Hence, the cardinality is less than   $3{p \choose 1}$.
Now consider the set $A \subset \Omega$ such that $|A| \leq {p \choose s}/{p \choose 1} \leq  {  (p-s)}$.
The set of elements $\omega \in \Omega$ that are within Hamming distance $1$ of some element of $A$ has cardinality of at most
$
|A|  {3}{p \choose 1 } < |\Omega|.
$
Therefore, for any such set with cardinality $|A|$, there exists an $\omega \in \Omega$ such that $\rho(\omega,\omega')>1 $ .
%
The expected number of false positives of an estimator $J$ is given by
\[
E_{\omega}|J\setminus S_\omega| = \sum_{q=1}^Q E_\omega d_q(J,\omega_q)
\]
 with 
$
 d_q(J,\omega_q)=  \rho(1\{q \in J\},w_q)
 $ and $\rho$ as Hamming distance. Define the statistic $\omega'_q = \arg\min_{t=0,1} d_q(J,t)$. Then,
  by the  definition of $\omega ' _q$ we have  $d_q(\omega_q',\omega_q)  = 
  |\omega_q - \omega^{'}_q|  
   \leq d_q(J,\omega_q' )+d_q(J,\omega_q)\leq 2 d_q(J,\omega_q)$.
   Moreover,
\[
E_{\omega}|J\setminus S_\omega| \geq \frac{1}{2}  \sum_{q=1}^p E_\omega  |\omega'_q-\omega_q|  = \frac{1}{2} E_\omega\rho(\omega',\omega).
\]
Therefore, from Assouad's Lemma (see Theorem 2.12 of \cite{T09})  we have 
 \[
\inf_J\sup_{\boldsymbol\beta \in  \mathbb{B}_0(s)} E_{\boldsymbol\beta}|S^c \cap J|  \geq   \frac{1}{2 } \inf_{\omega'} \sup_{\omega \in \Omega} E_{\omega} \rho(\omega',\omega) 
\]
\[
\geq  \frac{ 1}{4}  2^s {p \choose s} \max\left\{ \exp\{-\alpha\}, (1-\sqrt{\alpha/2}) \right\}
\]
as long as 
$
\mathcal{K}(\mathbb P_{\omega'},\mathbb P_{\omega}) \leq \alpha <\infty.
$
 Straight forward computation shows that $\mathcal{K}(P_{\omega'},P_{\omega})  \leq n \rho_n$.

\qed

\subsection{Proof of Theorem \ref{thm:var}}
Proof follows simple computations using result of Theorem 1 and equations  \eqref{eq:weak} and \eqref{eq:weak1}.

The risk of the estimator $\pi^*(\lambda_n)$ is equal to $E (\pi^*(\lambda_n) - p_j)^2 = \frac{1}{4} \frac{1}{(1+\sqrt{b})^2}.$
Moreover, 
\begin{eqnarray*}
\mbox{var}\left(\pi_j^*(\lambda_n)\right)  &\leq&  \frac{1}{4} \frac{1}{(1+\sqrt{b})^2} \\
&\stackrel{(i)}{<}&  \frac{p_j (1-p_j)}{b}\\
&\stackrel{(ii)}{\leq}& \frac{ P(\hat \beta_j (\lambda_n) \neq 0) \left(1-P(\hat \beta_j (\lambda_n) \neq 0)\right)}{b}\\
&=&\mbox{var}\left(\pi_j (\lambda_n)\right) 
\end{eqnarray*}
where $(i)$ holds for all $p_j \in (1/2 - c_n,1/2+c_n)$ and for small values of $b$, $c_n \sim 1/2$ and $(ii)$  follows from Theorem 1 and equations  \eqref{eq:weak} and \eqref{eq:weak1}.
\qed

\subsection{Proof of Theorem \ref{prop:1}}
  Lemmas  \ref{lem:kkt} and \ref{lem:2} are stated for general sub-Lasso estimator and can  easily be adapted to case of   bagged estimator. With their help and results of Theorems \ref{thm:21} and \ref{thm:31}, we  are ready to finalize the proof of Theorem \ref{prop:1}.
 Equivalent of Lemma \ref{lem:kkt} requires 
 $
 |\sum_{l\notin I_i} (Y_l- \mathbf X_l^T \hat {\boldsymbol\beta}^b(\lambda_n^1))X_{lj}|\leq n\lambda_n- n\lambda_n^1
$
to hold, whereas  equivalent of Lemma \ref{lem:2}  requires 
 $
 |\sum_{l\notin I_i}  (Y_l- \mathbf X_l  \hat{\boldsymbol\beta}(\lambda_n) X_{lj}|   \leq {\lambda_n^1}/n- n\lambda_n
 $
 to hold.
 The proof follows easily as a consequence of results obtained in Theorems \ref{thm:final}, \ref{thm:21} and \ref{thm:31}. 
 
Approximating sparse recovery is not possible as a consequence of the proof of  Theorem \ref{thm:final}. For the bagged estimator \eqref{eq:subagging}, there exists no feasible $q$ that is different from zero, which  solves the equivalent of \eqref{eq:so1}.  The equivalent of \eqref{eq:so1} would require, on one side $q > \sqrt{\log p / N}$ and on the other $q < \sqrt{\log p /n}$. For $N \ll n$ this is not possible as $\sqrt{\log p / N} > \sqrt{\log p / n}$. For a special case of $N = n/k$,  only the choice of $k=1$ and fixed, not divergent  $s$ and $p$, allows both conditions to be satisfied.
 
 Second, on the subject of the exact sparse recovery, as a consequence of previous equivalent of Lemma \ref{lem:2} and  Theorem \ref{thm:21},  
 \begin{eqnarray}  \label{eq:bag1}
\lambda_n 
 \geq  \lambda_n^1 +
  \frac{2 c_9 \lambda_n^1 s ^2 }{n \zeta_N^{5 }} + \frac{2 c_{10} \lambda_n^1 s^{3/2}  }{n \zeta_N  } + \frac{ \sqrt{2} c_{11}  \lambda_n^1 s^{3/2} }{n \zeta_n  \zeta_N^{3}} 
  +2 \sigma \sqrt{\frac{(n-N) \log p}{n^2}} +2 \sigma \sqrt{\frac{ 4 r  \log p}{n^2 (n- N) }},
\end{eqnarray} 
for some universal, positive and bounded constants $c_9,c_{10} ,c_{11} $.
 As a consequence of equivalent of Lemma \ref{lem:kkt} and  Theorem \ref{thm:31} (where a factor of $1/n$ is lost due to the fact that all weights are equal to $1$)  
 \begin{eqnarray}  \label{eq:bag2}
 {\lambda_n^1}{  } 
&\geq&    \lambda_n +\frac{\lambda_n}{n} + \frac{c_{12} \lambda_n  s^{3/2}  }{n {\zeta_n^2 \zeta_{N-n}}}
 +
 \frac{\sqrt{2}c_{13}  \lambda_n  s^{3/2}}{n \zeta_n \zeta_{N-n}^3}  +2 \sigma \sqrt{\frac{(n-N) \log p}{n^2}} +2 \sigma \sqrt{\frac{ 4 C' s  \log p}{n^2 (n- N) }},
\end{eqnarray}
for some universal, positive and bounded constants $ c_{12} ,c_{13} $.

 If $N \leq n$ then, from the above contradictory conditions   one can see that for all fixed $\lambda_n$ and all  $j \in \hat S(\lambda_n)$, for all $\lambda_n^1$, $P (j \in \hat S_i (\lambda_n^1)) =0$.
 Moreover,
   we  employ the Massart's Dvoretzky-Kiefer-Wolfowitz  inequality  to bound   the distance between an empirically determined distribution function and  the population distribution function. Hence,  
\begin{eqnarray*}\nonumber
&&P \left( \frac{1}{d} \sum_{i=1}^d \mathbbm{1}( j \in \hat S_i(\lambda_n^1)) \leq \frac{1}{2}P( j \in \hat S_1(\lambda_n^1)) \right)
\\\nonumber
&&\leq  P \left( \sup_{\lambda \in (0,\lambda_n^1]} \sqrt{n} \left| \frac{1}{d} \sum_{i=1}^d \mathbbm{1}( j \in \hat S_i(\lambda_n^1))  - P( j \in \hat S_1(\lambda_n^1))  \right| \geq \sqrt{d} /2\  P( j \in \hat S_1(\lambda_n^1))  \right)
\\
&&\leq 2 e^{-  d P( j \in \hat S_1(\lambda_n^1)) ^2 /2 }.
\label{eq:dvoretsky}
\end{eqnarray*}
 As we have shown that for all $\lambda_n^1$, $P (j \in \hat S_i (\lambda_n^1)) =0$, it follows that
 subagged estimator does not have the  same sparsity set    as  the Lasso estimator, i.e.
\[
P \left( \exists \lambda_n \geq 0, \exists  \lambda_n^1 \geq 0: \hat S(\lambda_n)= \hat S^b(\lambda_n^1)\right) =0.
\]
For a special case of $N = n/k$ we see that the only choice of $k=1$ and fixed, not divergent  $s$ and $p$  allows equations \eqref{eq:bag1} and \eqref{eq:bag2} to be satisfied up to a constant. That is, there exist two constants $0<c<\infty$ and $0<c_1<\infty$ such that for the choice of  $\lambda_n=c \lambda_n^1 \geq c_1 n^{-1/2}$ (i.e. result of \cite{B08} only holds for fixed  $p$)
\qed

\subsection{Proof of Theorem \ref{prop:bootstrap}}
  
  If  Condition \ref{cond:re} holds on the bootstrapped data matrix, then the result of this Theorem follows by repeating the steps of the proof of Lemma \ref{thm:1} with simplification of no weighting scheme $\mathbf w_k$, to obtain
  \[
  \sqrt{\sum_{j \in S} |{\hat {\boldsymbol \beta}}_{i:k,j} - \boldsymbol {\boldsymbol\beta}^*_j |^2}  \leq \frac{\left\|   X_{I_i} (\hat{\boldsymbol\beta} _{i:k}(\lambda_N) - {\boldsymbol\beta}^*) \right\|_2}{\zeta_N \sqrt{n}}.
  \]
  Following the steps parallel to those in  \cite{BRT09}, one can obtain  the predictive bounds of the order of $s \lambda_N/\zeta_N^2$, for $N=n/k$ and $\lambda_N \geq 2 \sigma \sqrt{2 k \log p/n}$.  From the classical results on Lasso prediction bounds, we know that optimal $\lambda_n \geq 2 \sigma \sqrt{2 \log p/n}$.
  The statement of the theorem follows, if we are able to bound the following expression
  \[
 \delta_n= \left| \frac{ \lambda_N}{\zeta_N^2} - \frac{\lambda_n}{\zeta_n^2}\right|  
  \]
We write $|\zeta_N^2 - \zeta_n^2|=\zeta_n^2-\zeta_N^2 =\epsilon_n$ for some $\epsilon_n \geq 0$. Then for $\lambda_N\leq 2 \sigma \sqrt{2 k \log p/n}$ we have
  \[
  \delta_n = \left| \frac{ \lambda_N}{\zeta_n^2- \epsilon_n} - \frac{\lambda_n}{\zeta_n^2}\right| < \frac{2 \sigma \sqrt{\frac{\log p}{n}}}{\zeta_n^2} \left( \sqrt{k} \frac{1}{1-\frac{\epsilon_n}{\zeta_n^2}}-1\right)  
    \]
now we claim that if  $k \leq 4$ then $ \sqrt{k} \frac{1}{1-\frac{\epsilon_n}{\zeta_n^2}}-1 \leq  C$ for some bounded constant $C>1$. This claim is equivalent to claiming that $\frac{1+ \epsilon_n/\zeta_n^2}{1-\epsilon_n/\zeta_n^2} \leq C$, that is $ \epsilon_n/\zeta_n^2 \leq \frac{C-1}{C+1}$.
However, from Condition \ref{cond:re} applied on the full data matrix $\bX$, we know that  $0 \leq \epsilon_n < \eta \zeta_n^2$ for some constant $\eta <1$. Hence, constant $C>1$ that satisfies above properties is $(\eta +1)/(1-\eta)$. Therefore, one can conclude that 
$ \delta_n \leq \frac{2 C \sigma }{\zeta_n^2} \sqrt{\frac{\log p}{n}}$.
  \qed

\section{Proofs of   Lemmas}

\subsection{Proof of Lemma \ref{lem:7}}
\begin{proof}

Observe that $ \mathbf X_{I,  A} (\mathbf X_{ I,A}^T \mathbf X_{ I, A} )^{-1}   \mathbf X_{ I, A}^T \mathbf X_{I,j}$ is a projection of $\bX_{I,j}$ onto space spanned by the columns of $\bX _{I,A}$. Moreover, $\| \bX_j\|_2^2 =1$ and 
\[
1 = \sum_{l=1}^n X_{lj}^2 \geq \sum_{l \in I}X_{lj}^2 = \|\bX_{I,j} \|_2^2.
\]
Therefore, by the properties of the  projection matrices
\[
\left\| \mathbf X_{I,  A} (\mathbf X_{ I,A}^T \mathbf X_{ I, A} )^{-1}   \mathbf X_{ I, A}^T \mathbf X_{I,j}\right\|_2^2 \leq 1.
\]
Moreover, observe that  
$$\| \bM \bx\|_2^2 = \bx^T \bM^T \bM \bx \geq \lambda_{\min} (\bM^T \bM) \| \bx\|_2^2.$$ With $\bM = \mathbf X_{I,  A} $ and $\bx = (\mathbf X_{ I,A}^T \mathbf X_{ I, A} )^{-1}   \mathbf X_{ I, A}^T \mathbf X_{I,j}$,  
\[
\left\|  \mathbf X_{I,  A} (\mathbf X_{ I,A}^T \mathbf X_{ I, A} )^{-1}   \mathbf X_{ I, A}^T \mathbf X_{I,j}\right\|_2^2 \geq  \lambda_{\min} \left( \mathbf X_{ I,A}^T \mathbf X_{ I, A}\right)  \left\| (\mathbf X_{ I,A}^T \mathbf X_{  I,A} )^{-1}   \mathbf X_{I,  A}^T \mathbf X_{I,j}\right\| _2^2.
\]
Next, notice that the last two inequalities combined lead to 
\begin{eqnarray*} \label{eq:step001}
\left\| (\mathbf X_{ I,A}^T \mathbf X_{ I, A} )^{-1}   \mathbf X_{  I,A}^T \mathbf X_{I,j}\right\| _2^2  \leq 1/ \lambda_{\min} \left( \mathbf X_{ I,A}^T \mathbf X_{ I, A}\right)  \leq 1/\zeta_N^2 ,
\end{eqnarray*}  
where in the last step we used Condition \ref{cond:re} with the vector $\bv = (1,\dots 1, 0, \dots, 0)^T$ and  the constant $a=1$ and have made a simple observation $\lambda_{\min} \left( \mathbf X_{ I,A}^T \mathbf X_{ I, A}\right) \geq \lambda_{\min} \left( \frac{1}{n}\mathbf X_{ I,A}^T \mathbf X_{ I, A}\right)$.
By the inequality of $l_p$ norms, $\| \bx\|_2 \geq \sqrt{r} \|\bx \|_1$ for all vectors $\bx \in \mathbb{R}^r$,
\begin{eqnarray*} \label{eq:step0100}
\left\| (\mathbf X_{ I,A}^T \mathbf X_{ I, A} )^{-1}   \mathbf X_{ I, A}^T \mathbf X_{I,j}\right\| _1^2 \leq r   /\zeta_N^2.
\end{eqnarray*} 

In addition, the following holds 
\[
1/\zeta_N^2 \geq \left\| (\mathbf X_{ I,A}^T \mathbf X_{ I, A} )^{-1}   \mathbf X_{  I,A}^T \mathbf X_{I,j}\right\| _2^2  \geq \lambda_{\min} \left( (\mathbf X_{ I,A}^T \mathbf X_{ I, A})^{-2} \right)\left\|  \mathbf X_{  I,A}^T \mathbf X_{I,j}\right\| _2^2,
\]
where the last step follows from the observation $\| \bM \bx\|_2^2 = \bx^T \bM^T \bM \bx \geq \lambda_{\min} (\bM^T \bM) \| \bx\|_2^2$, with $\bM = (\mathbf X_{ I,A}^T \mathbf X_{ I, A} )^{-1}$ and $\bx = \mathbf X_{  I,A}^T \mathbf X_{I,j}$.
Moreover,  utilizing the bound $\lambda_{\min}  (\bA^{-1}) = \lambda_{\min}^{-1}(\bA )$ for any positive semi-definite  matrix $\bA$,  
\begin{eqnarray} \label{eq:step03000}
\|  \mathbf X_{I ,  A}^T \mathbf X_{I ,j}\|_2 
&\leq&  \lambda_{\min}^{-1} (\mathbf X_{ I,A}^T \mathbf X_{  I,A} )  /  {\zeta_N }  \leq \inf_{|A | = r} \lambda_{\min}^{-1} \left (  \mathbf X_{ I,A}^T \mathbf X_{  I,A}\right)/ {\zeta_N} \leq \zeta _N ^{-3 },
\end{eqnarray}  
where Condition \ref{cond:re} guarantees  $\lambda_{\min} \left (  \mathbf X_{ I,A}^T \mathbf X_{  I,A}\right) \geq \zeta_N$ 
for any $A $, such that $|A| \leq r$.
\end{proof}

\subsection{Proof of    Lemma \ref{lem:a}}
 \begin{proof} 
 By simple Markov's inequality we have
\begin{align} \label{eq:w1}
 \mathbb P^*\left(   |\sum_{l \in I_i} w_l X_{lj} \varepsilon_l| > nu_n  \right) \leq \inf _{q \geq 0} \left\{ \exp\{-n q u_n\} \mathbb E^* \exp\{q |\sum_{l \in I_i} w_l X_{lj} \varepsilon_l|\} \right\}.
\end{align}
Observe that Condition \ref{cond:w} implies    that
random variables $\exp\{ w_l X_{lj} \varepsilon_l\}$ are negatively dependent. Hence the RHS can be upper bounded with
\[
\exp\{-n q u_n\}  \prod_{l\in I_i} \mathbb E^*  \exp\{q | w_l X_{lj} \varepsilon_l|\},
\]
for every $q \geq 0$.
 Let $\mathbf w=(w_1,\dots,w_N)$ be a vector of exchangeable random variables that satisfy Condition \ref{cond:w}.

 Let us define $\mathbf S $ to be  a random permutation   over the set of all combinations of $N$ sized subsets of $1,\dots, n$, by  requiring that $w_{\mathbf S(1)} \geq w_{\mathbf S(2)} \geq \cdots \geq w_{\mathbf S(N)}$ and if $w_{\mathbf S(j)}=w_{\mathbf S(j+1)}$ then $\mathbf S(j) <\mathbf S(j+1)$. This is one possible definition that is unambiguous to the presence of ties. 
 Let $\mathbf R$ denote a random permutation uniformly distributed over the set of all combinations of $N$ sized subsets of $1,\dots, n$. Note that $X_1,\dots, X_n$ are independent of $(\mathbf w, \mathbf R)$. Observe that $\mathbf R$ is independent of $(\mathbf w , \mathbf S)$.
Notice that $\mathbb P  = P_\epsilon \times P_w$. By exchangeability of vector $\mathbf w_k$, for $l \in I_i$ we have
\begin{align*}
  \mathbb E  \exp\{q| w_l X_{lj} \varepsilon_l|\}
   &= \mathbb E_w E_{\varepsilon}  \exp\{q| w_l X_{lj} \varepsilon_l|\}
    = \mathbb E_w E_{\varepsilon}  \exp q |  w_l X_{\mathbf R(l)  j} \varepsilon_{\mathbf R(l)  }|   \\
    &= \mathbb E_w E_{\varepsilon}  \exp q |w_{\mathbf S(l)} X_{\mathbf R \circ \mathbf S(l) j} \varepsilon_{\mathbf R \circ \mathbf S(l)}| .
\end{align*}
Let $\circ$ denote pointwise multiplication.
Observe that $\mathbf R \circ \mathbf S $ is   independent of $\mathbf S$ and has the same distribution as $\bR$.
Therefore, for an $l \in I_i$,
\begin{align}
  \mathbb E  \exp\{|q w_l X_{lj} \varepsilon_l|\} 
  &= 
    \mathbb E_w \Bigl[ E_{\varepsilon} \bigl[\exp  q|w_{\mathbf S(l)} X_{\mathbf R \circ \mathbf S (l)  j} \varepsilon_{\mathbf R \circ \mathbf S(l)}| \bigl] \Bigl]
\nonumber  \\
 &\stackrel{(i)}{\leq }
  \sqrt{  \mathbb E_w \Bigl[\exp  2|w_{\mathbf S(l)}| \Bigl] } \sqrt{ E_{\varepsilon} \Bigl[ \exp 2q |X_{\mathbf R(l)  j} \varepsilon_{\mathbf R \circ \mathbf S(l)}| \Bigl]}
\nonumber \\ 
 &
 \stackrel{(ii)}{\leq} \mathbf w _{2 } \sqrt{ E_{\varepsilon} \Bigl[ \exp 2 q |X_{\mathbf R(l)  j} \varepsilon_{\mathbf R (l)}| \Bigl]}, \label{eq:w2}
\end{align}
where $(i)$ follows from CauchyÐSchwarz inequality and $(ii)$ follows from
\[
\mathbf w _{2}^2= \mathbb E_w \exp  2|w_{\mathbf S(l)}| = \int_0^\infty  P_w \left(  \exp  2|w_{\mathbf S(l)}| \geq t \right)dt 
=
\int_0^\infty  P_w \left(    |w_{\mathbf S(l)}| \geq \frac{1}{2}\log t \right)dt .
\]
  

Next, observe that $P_w(X_i >a) \leq P_w(\sup_{i} X_i >a)$ holds for any $a \in \mathbb{R}$. Hence,
   \begin{align}\label{eq:w3}
   \mathbf w _{2} 
   \leq \sqrt{ \int_0^\infty  \sup_{l \in I_i} P_w \left(     w _{\mathbf S(l)} \geq   \frac{1}{2}\log t  \right)dt}
   \leq  \sqrt{ \int_0^{e^{2n}} P_w\left(    w _{\mathbf S(1)} \geq   \frac{1}{2}\log t  \right)dt}.
   \end{align}
  where in the last step we observed that  $ \max _ {l \in I_i} w_i \leq \sum_{l \in I_i} w_l = n$ by Condition \ref{cond:w}.
  
Furthermore,  the Taylor expansion around $0$ provides
 \[
E_{\varepsilon}  \exp q |X_{\mathbf R(l)  j} \varepsilon_{\mathbf R (l)}| = 1 + E_{\varepsilon}  q |X_{\mathbf R(l)  j} \varepsilon_{\mathbf R (l)}| + q^2|X_{\mathbf R(l)  j} |^2 \sum_{r=2}^\infty \frac{1}{r!} |q X_{\mathbf R(l)  j} |^{r-2}E_{\varepsilon}  |\varepsilon_{\mathbf R (l)}|^r.
 \]

 Since $E_{\varepsilon} [\varepsilon_i]=0$ and $E_{\varepsilon}  |\varepsilon_{\mathbf R (l)}|^r  \leq r! \sigma^2  c^{r-2} /2$
 we have 
 \[ E_{\varepsilon}  \exp q |X_{\mathbf R(l)  j} \varepsilon_{\mathbf R (l)}| \leq 1 + \frac{ |X_{\mathbf R(l)  j} |^2 \sigma^2 q^2}{2} \sum_{r=2}^\infty q^{r-2} |  X_{\mathbf R(l)  j} |^{r-2}  c^{r-2},
 \]
 for some constant $c<\infty$.
 
As $\log e^{\lambda x} \leq e^{\lambda x} -1$, for all $q \leq 1/c$, we have the following estimation of logarithmic moment generating function
 \[
\log   E_{\varepsilon}  \exp q |X_{\mathbf R(l)  j} \varepsilon_{\mathbf R (l)}| 
  \leq   |X_{\mathbf R(l)  j} |^2 \sigma^2 q^2    \left( 1- q c  |  X_{\mathbf R(l)  j} | \right)^{-1}.
 \]
 Observe that  $ |  X_{\mathbf R(l)  j} | \leq  \max_{1 \leq l \leq N} \|  \bX_{\mathbf R(l)} \|_\infty \leq \|  \bX_{ I_i} \|_{\infty,\infty} $. Hence, the logarithmic moment generating function satisfies
 \begin{align}\label{eq:w5}
\log   E_{\varepsilon}  \exp q |X_{\mathbf R(l)  j} \varepsilon_{\mathbf R (l)}| 
  \leq   |X_{\mathbf R(l)  j} |^2 \sigma^2 q^2    \left( 1- q c \|  \bX_{ I_i} \|_{\infty,\infty} \right)^{-1}.
 \end{align}
 
 Utilizing \eqref{eq:w1} - \eqref{eq:w5}
 
 \begin{align*}
   \mathbb P \left(   |\sum_{l \in I_i} w_l X_{lj} \varepsilon_l| > nu_n  \right)& \leq  e^{ N\log \mathbf w _{2}  }
 \inf _{q \geq 0} \Biggl\{ \exp \left\{-n q u_n
 +\frac{ \sigma^2 q^2}{2}  \sum_{l  =1}^N |X_{\mathbf R(l)  j} |^2   \left( 1- q c \|  \bX_{ I_i} \|_{\infty,\infty} \right)^{-1} \right\}\Biggl\}.
\end{align*}

 Since the right hand side   above   depends on $q$, we proceed to find the optimal $q$ that minimizes it.
This is simply done, and the optimal $q$  is 
\[
q = \frac{n u_n} {  \sigma^2 \sum_{l  =1}^N |X_{\mathbf R(l)  j} |^2  + c n u_n  \|  \bX_{ I_i} \|_{\infty,\infty}}.
\]
This optimal $q$ leads to   the bound
  \[
 \mathbb P \left(   |\sum_{l \in I_i} w_l X_{lj} \varepsilon_l| > nu_n  \right) \leq    {\mathbf w _{2} }^{N }  \exp \left \{ -  \frac{ n^2 u_n^2}{2  \sum_{l=1}^N |X_{\mathbf R(l)  j} |^2\sigma^2 +2n c  u_n  \|  \bX_{ I_i} \|_{\infty,\infty}}
 \right\}.
\]

By observing simple relations $ \sum_{l=1}^N |X_{\mathbf R(l)  j} |^2 =  \|X_{\mathbf R(l)  j} \circ  X_{\mathbf R(l)  j} \|_1  \leq N \max_{1 \leq l \leq N} \max_{1 \leq j \leq p} |X_{\mathbf R(l)  j} |^2$, with the last term being upper bounded with 
$N\|  \bX_{ I_i} \|_{\infty,2} = N\max_{1 \leq j \leq p}  |  \bX_{ I_i j} |^2$
we obtain
\begin{eqnarray*}
 \mathbb P\left(   |\sum_{l \in I_i} w_l X_{lj} \varepsilon_l| > nu_n  \right) \leq  
\exp \left\{ N \log   \mathbf w _{2}  -\frac{n^2 u_n^2 }{ 2\sigma^2   N \|  \bX_{ I_i} \|_{\infty,2}  +2  n  c u_n \|  \bX_{ I_i} \|_{\infty,\infty}} \right\}.
 \end{eqnarray*}

\end{proof}

\subsection{Proof of Lemma \ref{lem:kkt}}
\begin{proof}  
We want to show that  for all $j$ for which $\hat{\boldsymbol\beta}_{i:k,j} (\lambda_N)=0$ and \eqref{eq:kktsublassob} hold, equation \eqref{eq:kktlassob} also hold, that is 
\[
\left| \left\langle \bX_j,  \bY - \bX \hat\bbeta_{i:k} \right\rangle_n \right| \leq  \lambda_n.  
\]
As $\bw_k$ is a vector of a strictly positive random variables,    $\min_{1 \leq j \leq N} \sqrt{w_{k,j}} < \sum_{j=1}^N \sqrt{w_{k,j}}$. Hence, the desired inequality above 
 follows easily from the following inequality
\[
\left| \left\langle \bX_j,  \bY - \bX \hat\bbeta_{i:k} \right\rangle_n \right| 
  \leq 
 \left|  \left\langle \bD_{\sqrt \mathbf w_k} \bX_{I_i,j},  \bD_{\sqrt \mathbf w_k} \bY_{I_i} - \bD_{\sqrt \mathbf w_k}  \bX_{I_i} \hat\bbeta_{i:k} \right\rangle_n \right| 
  + 
\left|  \left\langle \bX_{I_i^c,j},  \bY_{I_i^c} - \bX_{I_i^c} \hat\bbeta_{i:k} \right\rangle_n \right| 
\]
where the first term in the rhs is bounded by $\lambda_N$ (by \eqref{eq:kktsublassob}) and the second with $\lambda_n-\lambda_N$  (by  the assumption of  the Lemma).
\end{proof}

\subsection{Proof of Lemma \ref{lem:2}}

 \begin{proof} 
 Let us assume that for those $j$ such that $\hat{\boldsymbol\beta} _{j}=0$,  \eqref{eq:kktlassob} holds. 
 We show that for such $j$'s, equation \eqref{eq:kktsublassob} also holds.  First we observe,
 \[
 \left| \left\langle \bX_j,  \bY - \bX \hat\bbeta  \right\rangle_n \right| 
 \geq 
 \left| 
 \left\langle \bX_{I_i,j},  \bY_{I_i} - \bX_{I_i} \hat\bbeta  \right\rangle_n 
 - 
 \left\langle \bX_{I_i^c,j},  \bY_{I_i^c} - \bX_{I_i^c}   \hat\bbeta \right\rangle_n 
 \right|
 \]
By analyzing two cases individually:

 Case (I): $ \left |\left\langle \bX_{I_i,j},  \bY_{I_i} - \bX_{I_i} \hat\bbeta  \right\rangle_n \right| \leq  \left|\left\langle \bX_{I_i^c,j},  \bY_{I_i^c} - \bX_{I_i^c}   \hat\bbeta \right\rangle_n \right| $,
 and 
 
  Case (II): $\left| \left\langle \bX_{I_i,j},  \bY_{I_i} - \bX_{I_i} \hat\bbeta  \right\rangle_n \right|  \geq  \left| \left\langle \bX_{I_i^c,j},  \bY_{I_i^c} - \bX_{I_i^c}    \hat\bbeta\right\rangle_n \right| $, we have
  \[
 \left |\left\langle \bX_{I_i,j},  \bY_{I_i} - \bX_{I_i}   \hat\bbeta  \right\rangle_n \right| \leq  \left|\left\langle \bX_{I_i^c,j},  \bY_{I_i^c} - \bX_{I_i^c}   \hat\bbeta \right\rangle_n \right|  + \lambda_n
  \] 
 holds in both cases. With it we can then see that 
  \[
  \left |\left\langle  \bD_{\sqrt \mathbf w_k}  \bX_{I_i,j},  \bD_{\sqrt \mathbf w_k} \bY_{I_i} - \bD_{\sqrt \mathbf w_k} \bX_{I_i} \hat\bbeta  \right\rangle_n \right| \leq  n  \left|\left\langle \bX_{I_i^c,j},  \bY_{I_i^c} - \bX_{I_i^c}   \hat\bbeta  \right\rangle_n \right|  + n \lambda_n
  \]
  since  $ \| \bD_{\sqrt \mathbf w_k}^2\|_F = \| \bD_{ \mathbf w_k}\|_F\leq n$ for all $k$ almost surely.
  Hence to show that $\hat{\boldsymbol\beta} $ satisfies KKT for sub-Lasso as well,  we need 
$\left|\left\langle \bX_{I_i^c,j},  \bY_{I_i^c} - \bX_{I_i^c}  \hat\bbeta  \right\rangle_n \right|   \leq \frac{\lambda_N}{n} - \lambda_n .
  $ We observe that the last inequality is in the statement of the lemma.
 \end{proof}

\subsection{Proof of Lemma \ref{lem:matrixineq}}

\begin{proof}
Note that by $\| \bD\| \leq \sqrt{n} \| \bD\|_F$  and the definition of the Frobenius norm we have that for two semi-positive definite matrices $\bD,\bG \in \mathbb{R}^{n\times n}$
\[
\left\| \bD^{-1}-\bG^{-1} \right\| = \sqrt{n}  \sqrt{\max\biggl\{ \left|\lambda_{\min} (\bD^{-1}-\bG^{-1})\right| , \left|\lambda_{\max}(\bD^{-1}-\bG^{-1}) \right|\biggl\}}.
\]
For all $i,j \geq 1$ and $i+j-1 \leq n$,
 Weyls inequalities  and Theorem III.2.8 of  \cite{B97}  we have
\[
\lambda_{i +j-1} (\bD +\bG) \leq \lambda_i(\bD) + \lambda_j(\bG).
\]
Utilizing that $\lambda_{\max}(-\bG^{-1})= - \lambda_{\min}(\bG^{-1})=- \lambda_{\min}^{-1}(\bG)$ and $\lambda_{\max}(\bD^{-1})=\lambda_{\max}^{-1}(\bD)$ 
 we have
\[
 \lambda_{\max}(\bD^{-1}-\bG^{-1})  \leq \left| \frac{1}{\lambda_{\max}(\bD)} - \frac{1}{\lambda_{\min}(\bG)} \right|,
\]
\[ 
 \lambda_{\min}(\bD^{-1}-\bG^{-1}) \leq \left| \frac{1}{\lambda_{\min}(\bD)}  - \frac{1}{\lambda_{\min}(\bG)}\right|.
\]
 Therefore,
\[
\left \| \bD^{-1}-\bG^{-1}\right\| \leq  \sqrt{n} \sqrt{\left| \frac{1}{\lambda_{\min}(\bD)} - \frac{1}{\lambda_{\min}(\bG)} \right|} \leq \sqrt{n} \sqrt{\frac{1}{\lambda_{\min}(\bD)}  + \frac{1}{\lambda_{\min}(\bG)}}.
\]

\end{proof}

\renewcommand\thesection{\Alph{section}}
\setcounter{section}{0}
\section{Supplementary Matterials}

\subsection{Proof of Lemma \ref{thm:1}}

\begin{proof} 
Note that part (ii) of this Lemma is an easy consequence of Lemma B.1 in \cite{BRT09}, hence we omit the details. For part(i) we proceed as follows.
From the definition,  we have  for every realization of random weights $\mathbf w_k$,
$$
\frac{1}{n} \sum_{l \in I_i } w_{k,l}(Y_l-\bX_l\hat{\boldsymbol\beta} _{i:k}(\lambda_N)  )^2  + 2\lambda_N \| \hat{\boldsymbol\beta} _{i:k}(\lambda_N) \| _1 \leq \frac{1}{n} \sum_{l \in I_i} w_{k,l}(Y_l-\bX_l{ {\boldsymbol \beta}} )^2 + 2\lambda_N \|\beta\|_1
$$ holds for any value of $\beta$. For simplicity of the notation we have suppressed  the dependence  $\hat{\boldsymbol\beta} _{i:k}(\lambda_N)$ of $\lambda_N$ and $k$. By using  $Y_i = X_l {\boldsymbol\beta}^* + \varepsilon_i$, and setting $\boldsymbol\beta={\boldsymbol\beta}^*$, previous becomes  equivalent to 
\begin{eqnarray*}
\frac{1}{n} \sum_{l \in I_i} w_{k,l}(\bX_l{\boldsymbol\beta}^*-\bX_l\hat{\boldsymbol\beta} _{i:k}(\lambda_N) )^2  &\leq& 2 \frac{ \|\hat{\boldsymbol\beta} _{i:k}(\lambda_N) - {\boldsymbol\beta}^* \|_1}{n} \max_{1 \leq j \leq p} \left( \sum_{l \in I_i } w_{k,l} |\varepsilon_i X_{lj}| \right) \\
&+&  2\lambda_N \| {\boldsymbol\beta}^*\|_1 + 2\lambda_N \| {\boldsymbol\beta} _i\|_1.
\end{eqnarray*}
Consider  the event $\mathcal{A}_q(\lambda_N)= \bigcap_{j=1}^p \left\{2  \frac{1}{n}\sum_{l \in I_i} w_{k,l} |\varepsilon_i X_{lj}| \leq \lambda_N - q\right\}$, for $q < \lambda_N$. Using the fact that $w_l\geq 1$ for all $l\in I_i$ we have the following
\begin{eqnarray*}
&&\frac{1}{n} \min_{1 \leq l \leq N} w_l \left\| \bX_{I_i}(\boldsymbol {\boldsymbol\beta}^* - \hat{\boldsymbol\beta} _{i:k}(\lambda_N)) \right \|_2^2  + \lambda_N  \|\hat{\boldsymbol\beta} _{i:k}(\lambda_N)- \boldsymbol {\boldsymbol\beta}^* \|_1 \\
&&\leq \frac{1}{n} \sum_{l \in I_i} w_l(\bX_l\boldsymbol {\boldsymbol\beta}^*-\bX_l\hat{\boldsymbol\beta} _{i:k}(\lambda_N) )^2  +  \lambda_N \|\hat{\boldsymbol\beta} _{i:k}(\lambda_N) - \boldsymbol {\boldsymbol\beta}^* \|_1 \\
&&\leq(2 \lambda_N -q) \|\hat{\boldsymbol\beta} _{i:k}(\lambda_N) - \boldsymbol {\boldsymbol\beta}^* \|_1+ 2\lambda_N \|\boldsymbol {\boldsymbol\beta}^*\|_1 - 2\lambda_N \| \hat{\boldsymbol\beta} _{i:k}(\lambda_N)\|_1 \\
&&\leq (4 \lambda_N -q) \sum_{j \in S} |{\hat {\boldsymbol \beta}}_{i,j} - \boldsymbol {\boldsymbol\beta}^*_j |
\end{eqnarray*}
which leads to the first conclusion. From the previous result  
$$
\lambda_N  \|\hat{\boldsymbol\beta} _{i:k}(\lambda_N)- \boldsymbol {\boldsymbol\beta}^* \|_1\leq (4 \lambda_N -q)\sum_{j \in S} |{\hat {\boldsymbol \beta}}_{i,j} - \boldsymbol {\boldsymbol\beta}^*_j |
\leq (4 \lambda_N -q)\sqrt{s} \sqrt{\sum_{j \in S} |{\hat {\boldsymbol \beta}}_{i,j} - \boldsymbol {\boldsymbol\beta}^*_j |^2} 
$$
leading to   $\hat{\boldsymbol\beta} _{i:k}(\lambda_N)- {\boldsymbol\beta}^*  \in \mathbb{C}(3,S)$. Using the Expected RE Condition on the set $\Omega_w$ we have 
$
\sqrt{\sum_{j \in S} |{\hat {\boldsymbol \beta}}_{i,j} - \boldsymbol {\boldsymbol\beta}^*_j |^2}  \leq {\mathbb E _{\mathbf w}\left\|   \tilde {\mathbf X} (\hat{\boldsymbol\beta} _{i:k}(\lambda_N) - {\boldsymbol\beta}^*) \right\|_2}/({e_n \sqrt{n}}).
$
Here $\mathbb E_{\mathbf w}$ denotes expectation taken with respect to the probability measure generated by $\mathbf w_k$.
Using Jensen's inequality for concave functions and independence of the weighting scheme $\mathbf w_k$ of vectors $\mathbf X_l$, we have
\begin{equation}
\begin{split}
\mathbb E _{\mathbf w}\left\|   \tilde{ \mathbf X} (\hat{\boldsymbol\beta} _{i:k}(\lambda_N) - {\boldsymbol\beta}^*) \right\|_2 &\leq \left\| \mathbb E _{\mathbf w} [  \tilde{ \mathbf X} ](\hat{\boldsymbol\beta} _{i:k}(\lambda_N) - {\boldsymbol\beta}^*) \right\|_2\\
&\leq \mathbb{E}_{\mathbf w} \|\sqrt{\mathbf w}\|_\infty  \left\|   { \mathbf X} _{I_i} (\hat{\boldsymbol\beta} _{i:k}(\lambda_N) - {\boldsymbol\beta}^*)\right\|_2.
\end{split}
\end{equation}
Combining previous inequalities we have
\[
\frac{1}{n} \min_{1 \leq l \leq N} w_l  \left\|  { \mathbf X} _{I_i} (\hat{\boldsymbol\beta} _{i:k}(\lambda_N) - {\boldsymbol\beta}^*)\right\|_2^2  \leq (4 \lambda_N -q)\sqrt{s}  \mathbb{E}_{\mathbf w} \|\sqrt{\mathbf w}\|_\infty  \frac{\left\|  { \mathbf X} _{I_i} (\hat{\boldsymbol\beta} _{i:k}(\lambda_N) - \boldsymbol {\boldsymbol\beta}^*) \right\|_2}{ e_n\sqrt{n}}.
\]
leading to
$
 \| { \mathbf X} _{I_i} (\boldsymbol {\boldsymbol\beta}^* - \hat{\boldsymbol\beta} _{i:k}(\lambda_N)) \|_2  \leq { (4 \lambda_N-q)   \sqrt{sn} } \mathbb{E}_{\mathbf w} \|\sqrt{\mathbf w}\|_\infty /{e_n (\min_{1 \leq l\leq N} w_l) }.
$
Hence, if we define $a_N$ as such that event $\{ \min_{1 \leq l \leq N} w_l \geq \mathbb{E}_{\mathbf w} \|\sqrt{\mathbf w}\|_\infty /a_N\}$ has probability close to 1, then 
\begin{equation}\label{eq:norm2}
 \left\|  { \mathbf X} _{I_i}  (\boldsymbol {\boldsymbol\beta}^* - \hat{\boldsymbol\beta} _{i:k}(\lambda_N)) \right\|_2^2  \leq \frac{ (4\lambda_N-q)^2  s n  }{ e_n^2 }  a_N^2.
\end{equation}
The size of the set $\mathcal{A}_q(\lambda_N)$ can be deduced from Lemma \ref{lem:a}  and is hence omitted. 
\end{proof}

\subsection{Proof of Lemma \ref{lemma:3}}

In light of the result of Lemma \ref{thm:1}, 
the proof    follows by repeating  exact  steps  of Theorem 7.2 of   \cite{BRT09}.
By contrast,
   with the difference that the loss function is now weighted least squares loss function; hence  we omit the proof.

\bibliographystyle{plainnat}
\bibliography{Bradic_Arxiv}{}

\end{document}